\theoremstyle{plain}
\newtheorem{Thm}{Theorem}[subsection]
\newtheorem{Cor}[Thm]{Corollary}
\newtheorem{Lem}[Thm]{Lemma}
\newtheorem{Prop}[Thm]{Proposition}
\theoremstyle{definition}
\newtheorem*{Def}{Definition}
\newtheorem{Ques}{Question}
\newtheorem*{Conve}{Convention}
\theoremstyle{remark}
\newtheorem{Rem}{Remark}
\newtheorem*{Remrk}{Remark}
\newtheorem{Ex}{Example}[subsection]
\begin{document}

\title{On isoperimetric profiles of algebras}

\author{Michele D'Adderio}\address{Department of Mathematics\\
University of California, San Diego\\
9500 Gilman Drive \#0112\\
La Jolla, CA 92093-0112\\
United States of America (USA)}\email{mdadderio@math.ucsd.edu}

\begin{abstract}
Isoperimetric profile in algebras was first introduced by Gromov in
\cite{gromov}. We study the behavior of the isoperimetric profile
under various ring theoretic constructions and its relation with the
Gelfand-Kirillov dimension.
\end{abstract}

\maketitle

\section*{Introduction}

The geometric concept of an isoperimetric profile was first
introduced in algebra for groups by Vershik in \cite{vershik} and
Gromov in \cite{gromovAsyInv}. Here is the definition given by
Gromov in \cite{gromov}, for semigroups:
\begin{Def}
Given an infinite semigroup $\Gamma$ generated by a finite subset $S$,
and given a finite subset $\Omega$ of $\Gamma$ we define the
\textit{boundary} of $\Omega$ as
$$
\partial_S(\Omega):=\bigcup_{s\in S}(s\Omega\setminus \Omega).
$$

Then we define the \textit{isoperimetric profile of a semigroup}
$\Gamma$ with respect to $S$ as the function from $\mathbb{N}$
onto itself given by
$$
I_{\circ}(n;\Gamma,S):=\inf_{|\Omega|=n} |\partial_S(\Omega)|
$$
for each $n\in \mathbb{N}$, where $|X |$ denotes the cardinality
of the set $X$.
\end{Def}
It's well known that the asymptotic behavior of this function is
independent of the set of generators $S$.

For properties of the isoperimetric profile see
\cite{erschler,erschler1,gromov,pittet}, the survey
\cite{pittetcoste} and references therein.

The notion of isoperimetric profile for algebras was introduced by
Gromov in \cite{gromov}:

\begin{Def}
Let $A$ be a finitely generated algebra over a field $K$ of
characteristic zero. Given two subspaces $V$ and $W$ of $A$ we
define the \textit{boundary} of $W$ with respect to $V$ by
$$
\partial_V(W):=VW/(VW\cap W).
$$

If $V$ is a generating finite dimensional subspace of $A$, we
define the \textit{isoperimetric profile of $A$ with respect to
$V$} to be the maximal function $I_{*}$ such that all finite
dimensional subspaces $W\subset A$ satisfy the following
\textit{isoperimetric inequality}
$$
I_{*}(|W|;A,V)=I_{*}(|W|)\leq |\partial_V(W)|,
$$
where $|Z|$ denotes the dimension over the base field $K$ of the
vector space $Z$.
\end{Def}
\vfill
\noindent\line(1,0){100}\\
{\small \textit{Keywords}: isoperimetric profile, Gelfand Kirillov
dimension, amenability.}%

Again, the asymptotic behavior of this function does not depend on
the generating subspace.

In \cite{gromov} Gromov studied in particular the isoperimetric
profile of group algebras and its relation with the isoperimetric
profile of the underlying group.

Unless otherwise stated, we consider finitely generated algebras
over a field of characteristic zero.

The isoperimetric profile is an asymptotically weakly sublinear
function, and it's linear if and only if the algebra is
nonamenable (in the sense of Elek \cite{elek1}). In this sense it
can be viewed as a measure of the amenability of an algebra.

We start by studying the isoperimetric profiles of some related
algebras. The main results can be stated as follows in the case of
finitely generated algebras:
\begin{Thm} \label{theorem1}
The isoperimetric profile of a finitely generated algebra $A$ is
asymptotically equivalent to the isoperimetric profile of a (right)
localization of $A$ with respect to a right Ore subset of regular
elements.
\end{Thm}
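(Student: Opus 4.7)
The plan is to prove both asymptotic inequalities $I_*(\,\cdot\,;A,V)\preceq I_*(\,\cdot\,;AS^{-1},\widetilde V)$ and the reverse. First, I would fix a finite dimensional generating subspace $V\subset A$ containing $1$. Since the asymptotic behaviour of the profile is independent of the generating subspace, I would then invoke the right Ore condition together with a common-denominator argument to reduce to the case where $AS^{-1}$ is generated by $V$ and a single element $s^{-1}$ for some $s\in S$, and work throughout with $\widetilde V := V+Ks^{-1}$.

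For $I_*^A(n)\leq I_*^{AS^{-1}}(n)$, the strategy is to take any $W\subset AS^{-1}$ of dimension $n$ and apply the right Ore condition to a basis of $W$, producing $t\in S$ with $Wt\subset A$. Since $t$ is regular, right multiplication by $t$ is injective, so $\dim(Wt)=n$, and one directly checks $V(Wt)=(VW)t$ and $V(Wt)\cap Wt=(VW\cap W)t$. Hence $\partial_V(Wt)\cong \partial_V(W)$, and the trivial inclusion $V\subset\widetilde V$ yields $\dim\partial_V(W)\leq\dim\partial_{\widetilde V}(W)$. Minimising over $W$ gives the inequality without any asymptotic rescaling.

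For the reverse inequality, given $W'\subset A$ of dimension $n$ with $V$-boundary $p=\dim\partial_V(W')$, I would construct $\widetilde W\subset AS^{-1}$ of comparable dimension whose $\widetilde V$-boundary is bounded by a constant times $p$. The naive choice $\widetilde W = W'$ does not work, because $s^{-1}W'$ can be disjoint from $W'$ and contribute up to $n$ to the boundary. Instead, my plan is to spread $W'$ by taking $\widetilde W = \sum_{l=0}^{M-1} W' s^{-l}$ for a suitable $M$. Left multiplication by $V$ is then controlled via $V\widetilde W\subset \widetilde W + \sum_l P s^{-l}$, where $P$ is a complement of $VW'\cap W'$ in $VW'$; and the $s^{-1}$-direction should telescope, since in the commutative model $s^{-1}\widetilde W = \sum_{l=1}^{M} W' s^{-l}$ differs from $\widetilde W$ only at one $n$-dimensional end term.

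The main obstacle will be implementing this telescoping non-commutatively, where $s^{-1}W'\neq W' s^{-1}$. The right Ore condition lets one rewrite $s^{-1}W' = W^{(1)} u^{-1}$ for some $W^{(1)}\subset A$ and $u\in S$, but one then has to pass to a common denominator in order to compare $s^{-1}\widetilde W$ with $\widetilde W$. The cleanest route forward seems to be to first replace $W'$ by an approximately $s$-invariant subspace $W'' = W'+sW'+\cdots+s^{L-1}W'$, so that the spread $\sum_l W'' s^{-l}$ is approximately $s^{-1}$-invariant modulo a subspace whose dimension can be bounded by Ore-based bookkeeping. Balancing the parameters $M$ and $L$ against the profile of $A$ would then close the asymptotic inequality $I_*^{AS^{-1}}(n)\leq C\,I_*^A(cn)$.
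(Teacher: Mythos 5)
Your first inequality is essentially the paper's own argument and is fine: a common denominator $t\in S$ makes right multiplication by $t$ a dimension- and boundary-preserving bijection carrying $W\subset AS^{-1}$ into $A$, whence $I_*(n;A,V)=I_*(n;AS^{-1},V)\leq I_*(n;AS^{-1},\widetilde V)$. However, your opening reduction to a localization generated by $V$ and a \emph{single} $s^{-1}$ is not available in general: the Ore set may be infinite and $AS^{-1}$ need not be finitely generated (take $A=K[x]$ and $S=K[x]\setminus\{0\}$, so $AS^{-1}=K(x)$). The correct formulation of the reverse direction is therefore: for \emph{every} subframe $W$ of $AS^{-1}$ one must show $I_*(n;AS^{-1},W)\preceq I_*(n;A,V)$, which is exactly how the paper defines "having an isoperimetric profile" for non-finitely-generated algebras.

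The genuine gap is in that reverse direction. Your plan — spreading a F\o lner subspace $W'\subset A$ over negative powers of $s$ and telescoping — breaks down at exactly the point you flag: noncommutatively $s^{-1}W'\neq W's^{-1}$, and the proposed repair (pre-saturating by positive powers of $s$ plus "Ore-based bookkeeping") is not carried out; the Ore condition gives no control on the dimension of the common denominators it produces, so the balancing of $M$ and $L$ is not justified. The construction is also unnecessary. The paper's trick is one line: given a subframe $W$ of $AS^{-1}$, pick $m\in S$ with $Wm\subset A$, and test $W$-boundaries on subspaces of the form $mZ$ with $Z\subset A$ (\emph{left} multiplication by $m$, not right). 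Since $m$ is regular, $|mZ|=|Z|$, and
$$
|\partial_W(mZ)|=|WmZ|-|mZ|\leq |WmZ+Z|-|Z|=|\partial_{Wm+K}(Z)|,
$$
so $I_*(n;AS^{-1},W)\leq I_*(n;A,Wm+K)\preceq I_*(n;A,V)$, the last step because $Wm+K$ is a subframe of the finitely generated algebra $A$ and $V$ is a frame. This sidesteps any comparison of $\partial_{\widetilde V}$ with $\partial_V$ and any F\o lner spreading; your approach as written does not close.
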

\begin{Remrk}
We will give later in the paper a precise definition on what we mean by asymptotical equivalence. Moreover, in the previous theorem, considering
localizations, we may get an algebra that is not finitely generated. Indeed we will see that in these cases it will make sense to talk about the
isoperimetric profile of these algebras, and the statements will turn out to be precise.
\end{Remrk}
\begin{Thm} \label{theorem2}
If the associated graded algebra $gr(A)$ of a filtered finitely
generated algebra $A$ is a finitely generated domain, then the
isoperimetric profile of $A$ is asymptotically (weakly) faster then
the isoperimetric profile of $gr(A)$.
\end{Thm}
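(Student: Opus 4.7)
The plan is to compare each finite-dimensional subspace $W\subseteq A$ with its naturally associated graded subspace in $gr(A)$, yielding a pointwise inequality between boundaries. Fix a finite-dimensional generating subspace $V\subseteq A$ which is compatible with the filtration in the sense that $V\subseteq F_1$ and $1\in V$; we may arrange that $V=F_1$ and $F_n=V^n$ for all $n$, which is the standard situation for degree-like filtrations. Let $\bar V\subseteq gr(A)$ be the image of $V$ via a splitting of $F_1=F_0\oplus(F_1/F_0)$; then $|\bar V|=|V|$, $1\in\bar V$, and $\bar V$ generates $gr(A)$ because $V$ generates $A$.

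For a finite-dimensional $W\subseteq A$, set $W_n:=W\cap F_n$ and $gr(W):=\bigoplus_n W_n/W_{n-1}\subseteq gr(A)$; a standard dimension count gives $|gr(W)|=|W|$. The key local computation is the inclusion $\bar V\cdot gr(W)\subseteq gr(VW)$ inside $gr(A)$: for representatives $\tilde v\in F_i$ and $\tilde w\in W\cap F_j$, the product $\tilde v\tilde w$ lies in $VW\cap F_{i+j}$, and its image in $F_{i+j}/F_{i+j-1}$ is simultaneously the element $v\cdot w$ of $\bar V\cdot gr(W)$ and an element of the degree $(i+j)$ component of $gr(VW)\subseteq gr(A)$. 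Hence $|\bar V\cdot gr(W)|\leq|gr(VW)|=|VW|$. Since $1\in V\cap\bar V$, we have $W\subseteq VW$ and $gr(W)\subseteq\bar V\cdot gr(W)$, so the two boundaries collapse to plain differences of dimensions and the above gives
$$
|\partial_{\bar V}(gr(W))|=|\bar V\cdot gr(W)|-|gr(W)|\leq|VW|-|W|=|\partial_V(W)|.
$$
Taking the infimum over all $W\subseteq A$ with $|W|=n$ yields the pointwise comparison $I_*(n;A,V)\geq I_*(n;gr(A),\bar V)$.

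The hard part will be upgrading this pointwise inequality to the intrinsic, generator-independent statement claimed by the theorem. This is where I expect the hypothesis that $gr(A)$ is a finitely generated domain to enter decisively: the domain property forces regular growth of the powers $\bar V^n$ and allows $\bar V$ to be compared asymptotically with any other finite-dimensional generating subspace of $gr(A)$, presumably through the same kind of Ore and growth arguments already supporting Theorem \ref{theorem1}. Once that generator-invariance is in hand, the pointwise comparison above upgrades directly to the desired (weakly) faster asymptotic relation between the isoperimetric profiles of $A$ and $gr(A)$.
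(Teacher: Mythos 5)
Your core computation is correct and is essentially the paper's own strategy (Theorem \ref{gradedassociated}): associate to each $W\subseteq A$ the graded subspace $gr(W)$ with $|gr(W)|=|W|$, and show the boundary does not grow. Your route through the one-sided inclusion $\bar V\cdot gr(W)\subseteq gr(VW)$, hence $|\partial_{\bar V}(gr(W))|\leq |gr(VW)|-|gr(W)|=|VW|-|W|$, is in fact cleaner than the paper's: the paper instead establishes the \emph{equality} $|a\,gr(W)|=|gr(aW)|$ for $a\in A_1$, and that equality (specifically the identity $aW\cap A_{i+1}=a(W\cap A_i)$) is precisely where the hypothesis that $gr(A)$ is a domain is used, to rule out some $x\in W\setminus A_i$ with $ax\in A_{i+1}$, which would produce a zero divisor in $gr(A)$. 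Your inclusion only needs $vw\in VW\cap F_{i+j}$ and the definition of multiplication in $gr(A)$, so it holds without the domain hypothesis; this is consistent with the paper's more general valuation inequality (Theorem \ref{valuationinequality}).

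The genuine defect is your last paragraph. The step you defer as ``the hard part'' is not where the difficulty or the domain hypothesis lives, and it is in fact immediate from what you have already arranged. You chose $V$ to be a frame of $A$ and argued that $\bar V$ is a frame of $gr(A)$ (this does require the filtration to be generated in degree one, i.e.\ $F_n=V^n$, which you assumed; it is the one genuinely delicate point in passing from the per-subframe statement to Theorem \ref{theorem2}, and the paper is equally brisk about it). Once $V$ and $\bar V$ are frames, Proposition \ref{geometricindependence} says any frame of a finitely generated algebra measures its isoperimetric profile, so $I_*(gr(A))\sim I_*(gr(A),\bar V)\preceq I_*(A,V)\sim I_*(A)$ and you are done. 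No Ore or growth argument is needed, and the domain hypothesis plays no role in frame-independence; it enters only in the paper's equality-based version of your key step, which your argument bypasses. You should simply delete the speculation and close the proof with the two applications of Proposition \ref{geometricindependence}.
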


The following theorem generalizes some of the results in
\cite{elek2} about division algebras. We state it here in the case
of finitely generated algebras.
\begin{Thm} \label{theorem3}
If $B\subset A$ are finitely generated domains and $B$ is right Ore,
then the isoperimetric profile of $B$ is asymptotically (weakly)
slower than the isoperimetric profile of $A$.
\end{Thm}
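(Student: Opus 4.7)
The plan is to produce, for each $n$, a finite-dimensional subspace of $B$ whose boundary bounds from above the boundary in $A$ of an arbitrary $n$-dimensional subspace $W \subset A$; taking infima then delivers the asymptotic inequality $I_{*}(n;B) \preceq I_{*}(n;A)$.

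First, since $B \subset A$, choose generating subspaces with $V_B \subset V_A$ (enlarge $V_B$ by adjoining generators of $A$ over $B$). Using the isomorphism $\partial_V(W) \cong (VW + W)/W$, for every subspace $W \subset B$ one has $V_B W + W \subset V_A W + W$, hence
\[
|\partial_{V_B}(W)| = \dim(V_B W + W) - \dim W \leq \dim(V_A W + W) - \dim W = |\partial_{V_A}(W)|.
\]
So boundaries measured in the smaller algebra dominate those measured in the larger one, for subspaces of $B$.

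The crux is a common right-denominator construction. Given a finite-dimensional $W \subset A$ with basis $w_1, \ldots, w_n$, iterated application of the right Ore condition yields a single regular $b \in B$ such that $Wb \subset B$. Because $A$ is a domain, right multiplication by $b$ is injective and hence $\dim Wb = \dim W$. Moreover $V_A \cdot Wb = (V_A W)b$ and $(V_A W)b \cap Wb = (V_A W \cap W)b$, so right multiplication by $b$ induces a linear isomorphism $\partial_{V_A}(W) \cong \partial_{V_A}(Wb)$, whence $|\partial_{V_A}(Wb)| = |\partial_{V_A}(W)|$. Combining with the first observation, for every $n$-dimensional $W \subset A$,
\[
I_{*}(n;B) \leq |\partial_{V_B}(Wb)| \leq |\partial_{V_A}(Wb)| = |\partial_{V_A}(W)|,
\]
and taking the infimum over such $W$ yields the desired inequality.

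The main obstacle is the common-denominator step: the hypothesis that $B$ is right Ore does not literally assert that the set $S = B \setminus \{0\}$ is right Ore inside $A$, and without this we cannot in general clear denominators for an arbitrary $W \subset A$. To sidestep this, the plan is to invoke Theorem \ref{theorem1}, replacing $B$ by its division ring of fractions $Q = Q(B)$ and $A$ by the right Ore localization $A_S$ (whose existence must itself be checked under the given hypotheses). Inside $A_S$, denominators can be cleared freely, so the argument of the previous paragraph compares $I_{*}(n;Q)$ with $I_{*}(n;A_S)$; the asymptotic equivalences supplied by Theorem \ref{theorem1} then transport the inequality back to $B$ and $A$. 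Ensuring that both localizations exist and that the comparisons compose in the correct asymptotic direction is the technical heart of the proof.
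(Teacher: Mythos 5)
There is a genuine gap, and it is exactly at the step you flag as the ``technical heart'': the common right-denominator construction does not exist under the stated hypotheses, and the proposed localization detour does not repair it. For $b\in B$ and $w\in A$, the containment $wb\in B$ forces $w\in Q(B)$ (where $Q(B)$ is the quotient division ring of $B$), so your construction can only move a subspace $W\subset A$ into $B$ when $W\subset Q(B)$ to begin with. Already for $B=K[x]\subset A=K[x,y]$ and $W=Ky$ there is no regular $b\in B$ with $Wb\subset B$, even though here $S=B\setminus\{0\}$ is a perfectly good Ore set in $A$. Passing to $Q=Q(B)$ and $A_S$ does not help: clearing denominators in $A_S$ lands you back in $A$, not in $Q$, and the real obstruction is that $A$ is in general of infinite rank over $Q(B)$, so no right multiplication can compress an arbitrary finite-dimensional subspace of $A$ (or of $A_S$) into $Q(B)$. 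Since the inequality $I_*(B)\preceq I_*(A)$ requires a \emph{lower} bound on $|\partial_{V_A}(W)|$ for \emph{every} subspace $W\subset A$ in terms of the profile of $B$, and your mechanism for producing such a bound (transporting $W$ isomorphically into $B$) is unavailable, the argument does not go through.

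The paper's route is structurally different and worth internalizing. One first reduces to the case where $A$ is amenable (otherwise $I_*(A)\sim n$ and there is nothing to prove), notes that an amenable domain is automatically right Ore, and replaces $B$ and $A$ by their quotient division rings $S\subset D$ using Lemma \ref{lemmalocal}, which preserves the profile exactly. Then $D$ is a \emph{free left $S$-module}, and Proposition \ref{freeleftmod} handles an arbitrary $W\subset D=\bigoplus_i Sa_i$ by choosing a triangular basis whose leading coefficients span subspaces $W_j'\subset S$ with $\sum_j|W_j'|=|W|$ and $|\partial_V(W)|\geq\sum_j|\partial_V(W_j')|\geq\sum_j I_*(|W_j'|;S,V)$. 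The final glueing step needs the subadditivity of $I_*(S,V)$, which is Theorem \ref{domainsubadditive} and rests on Zelmanov's Proposition \ref{zelmanov} together with Jacobson's theorem. So the subspace of $A$ is never pushed into $B$; instead its boundary is bounded below by a sum of boundaries of pieces that already live in $S$, and subadditivity reassembles the sum. If you want to salvage your outline, you would need to replace the denominator-clearing step by this decomposition-plus-subadditivity argument.
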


Given an amenable domain, it's not true that a subdomain must be
amenable. In fact it's well known that the Weyl algebra $A_1$ is
amenable, since it has finite $GK$-dimension, hence by \cite{elek2}
(or even by Theorem \ref{theorem1}) its quotient division algebra
$D_1$ is still amenable. But it's also known (see \cite{makar}) that
$D_1$ contains a subalgebra isomorphic to a free algebra of rank
$2$, which is known to be nonamenable. One of the main result of the
paper is that this is the only case that can occur:
\begin{Thm} \label{theorem4}
If $A$ is an amenable domain which does not contain a subalgebra
isomorphic to a noncommutative free algebra, then all the subdomains
of $A$ are amenable.
\end{Thm}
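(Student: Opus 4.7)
The plan is to apply Theorem \ref{theorem3} to every finitely generated subdomain of $A$, so that the content reduces to verifying its hypothesis: each such subdomain must be right Ore. Once that is known, Theorem \ref{theorem3} asserts that the isoperimetric profile of the subdomain is asymptotically (weakly) slower than $I_{*}(A)$, and sublinearity of $I_{*}(A)$ (that is, amenability of $A$) propagates downward. The argument then splits into two steps: first, reduce the statement to finitely generated subdomains; second, verify the Ore condition for them.

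For the reduction, I would write an arbitrary subdomain $B\subseteq A$ as the directed union of its finitely generated subdomains $\{B_{\alpha}\}$. Using the extension of the isoperimetric profile to possibly non-finitely generated algebras alluded to in the remark following Theorem \ref{theorem1}, a local-to-global principle applies: every finite dimensional subspace of $B$ already lies in some $B_{\alpha}$, so a uniform sublinear behaviour of the profiles $I_{*}(B_{\alpha})$ forces $I_{*}(B)$ to be sublinear, and hence $B$ to be amenable.

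For the Ore step, let $B\subseteq A$ be finitely generated. Since $A$ contains no noncommutative free subalgebra, neither does $B$. The key structural input is the classical fact (in the spirit of results of Cohn and Jategaonkar) that a domain without a noncommutative free subalgebra of rank $2$ satisfies the right Ore condition. Concretely, if $aB\cap bB=\{0\}$ for nonzero $a,b\in B$, one uses the domain property together with iterated failures of the Ore condition to show that reduced monomials in $a$ and $b$ are $K$-linearly independent, and thereby span a free subalgebra of rank $2$, contradicting the hypothesis. Granting this, Theorem \ref{theorem3} applies to the pair $B\subseteq A$ and gives that $I_{*}(B)$ is asymptotically slower than $I_{*}(A)$; since $A$ is amenable, so is $B$.

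The main obstacle I anticipate is precisely the implication ``no free subalgebra of rank $2$ $\Rightarrow$ right Ore'' used in the second step: either a clean reference must be invoked, or one has to give a direct ping-pong style construction of free generators from elements witnessing the failure of the Ore condition, taking care that the monomials produced are indeed $K$-linearly independent inside the domain. A more routine but non-trivial secondary issue is the careful setup of the isoperimetric profile outside the finitely generated case, which underpins the reduction in the first step.
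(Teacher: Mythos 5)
Your proposal is correct and takes essentially the same route as the paper: the key input is precisely Jategaonkar's theorem that a domain which is not right Ore contains a free subalgebra of rank two (so a citation suffices, no ping-pong construction is needed), after which the general form of Theorem \ref{theorem3} and the characterization of amenability via sublinearity of $I_*(\,\cdot\,;B,V)$ for every subframe $V$ finish the argument. Your preliminary reduction to finitely generated subdomains is harmless but not needed, since the paper's version of Theorem \ref{theorem3} is proved for arbitrary Ore subdomains and arbitrary subframes.
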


We computed the isoperimetric profile of various algebras:
\begin{Thm} \label{theorem5}
The isoperimetric profile of the following algebras is of the form
$n^{\frac{d-1}{d}}$ where $d$ is the $GK$-dimension of the algebra:
\begin{itemize}
    \item finitely generated algebras of $GK$-dimension $1$,
    \item finitely generated commutative domains,
    \item finitely generated prime $PI$ algebras,
    \item universal enveloping algebras of finite dimensional Lie
    algebras,
    \item Weyl algebras,
    \item quantum skew polynomial algebras,
    \item quantum matrix algebras,
    \item quantum groups $GL_{q,p_{ij}}(d)$,
    \item quantum Weyl algebras,
    \item quantum groups $\mathcal{U}(\frak{sl}_2)$ and
    $\mathcal{U}'(\frak{sl}_2)$.
\end{itemize}
\end{Thm}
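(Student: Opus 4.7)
The plan is to establish matching upper and lower bounds of order $n^{(d-1)/d}$ for every algebra on the list, following a uniform scheme.

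\textbf{Upper bound.} In each case there is a natural finite-dimensional generating subspace $V$ (containing $1$ and the standard generators) such that $\dim V^n$ grows like a polynomial in $n$ of exact degree $d=GKdim(A)$. Taking the test subspace $W=V^n$ gives $VW=V^{n+1}$, hence $|\partial_V(W)|=\dim(V^{n+1}/V^n)\sim c\,n^{d-1}$ while $|W|\sim c\,n^d$, and re-expressing in terms of $N=|W|$ produces $|\partial_V(W)|\lesssim N^{(d-1)/d}$, which bounds the isoperimetric profile from above.

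\textbf{Lower bound, reductions.} For the lower bound I would reduce each case to the commutative polynomial algebra $K[x_1,\ldots,x_d]$. A finitely generated commutative domain contains, by Noether normalization, a polynomial subring $K[x_1,\ldots,x_d]$ over which it is module-finite, and Theorem~\ref{theorem3} transfers the lower bound from the polynomial subring upward. A prime PI algebra is finite over its center (Posner's theorem), a commutative domain of the same GK-dimension, so the same reduction applies. Universal enveloping algebras of finite-dimensional Lie algebras (via PBW) and Weyl algebras (via the Bernstein filtration) are filtered with polynomial $gr(A)$, so Theorem~\ref{theorem2} pushes the lower bound from $gr(A)$ to $A$. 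The quantum algebras admit analogous filtrations whose associated gradeds are quantum skew polynomial rings; the monomial argument below still produces the $n^{(d-1)/d}$ bound in the skew case, and Theorem~\ref{theorem2} closes these cases. For GK-dimension $1$ algebras the target $n^0$ is the trivial bound $|\partial_V(W)|\geq 1$ for any nonzero subspace of an infinite-dimensional algebra.

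\textbf{Core case.} The main obstacle is the lower bound for $K[x_1,\ldots,x_d]$ itself. Set $V=\mathrm{span}(1,x_1,\ldots,x_d)$ and fix a finite-dimensional subspace $W$. Choosing a basis $w_1,\ldots,w_N$ of $W$ with distinct leading monomials (with respect to a fixed term order $\prec$) produces an exponent set $\Omega\subset\mathbb{N}^d$ of size $N=\dim W$; since $VW$ contains $w_j$ and $x_iw_j$, its leading-monomial set contains $\Omega\cup\bigcup_{i=1}^d(\Omega+e_i)$, giving
$$
|\partial_V(W)|\;=\;\dim VW-\dim W\;\geq\;\Big|\bigcup_{i=1}^d(\Omega+e_i)\setminus\Omega\Big|\;\geq\;\max_{1\le i\le d}|\pi_i(\Omega)|,
$$
where $\pi_i$ is the projection along $e_i$: indeed, for every $p\in\pi_i(\Omega)$, the point one step above the top of the fiber $\pi_i^{-1}(p)\cap\Omega$ lies in $(\Omega+e_i)\setminus\Omega$. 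The Loomis--Whitney inequality gives $\prod_i|\pi_i(\Omega)|\ge|\Omega|^{d-1}$, hence $\max_i|\pi_i(\Omega)|\geq|\Omega|^{(d-1)/d}$, completing the polynomial-algebra case and, by the reductions above, the full theorem.
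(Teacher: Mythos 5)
Your overall architecture matches the paper's: upper bounds from explicit monomial ``cubes,'' and lower bounds by reducing everything to (skew) polynomial algebras via Noether normalization, filtrations, and Ore localization. Where you genuinely diverge is the core case. The paper does not prove the lower bound for $K[x_1,\dots,x_d]$ directly: it quotes the Coulhon--Saloff-Coste inequality (Theorem \ref{coulsaloff}) for the semigroup $\mathbb{Z}^d_{\geq 0}$ and then Gromov's theorem that the isoperimetric profile of an orderable semigroup is equivalent to that of its semigroup algebra (Proposition \ref{polynomialalg}). Your leading-monomial argument plus Loomis--Whitney replaces both citations with a short self-contained proof, and, as you note, it applies verbatim to the quantum skew polynomial algebras (where the paper instead invokes Zhang's valuation and Theorem \ref{valuationinequality}). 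That is a real gain in self-containedness. Two small cautions: the claim that $\dim V^n\sim cn^d$ forces $\dim V^{n+1}-\dim V^n\sim cn^{d-1}$ for \emph{all} $n$ is not automatic (only on average); the paper sidesteps this by exhibiting cube subspaces whose boundary it computes exactly, and you should either do the same or add the averaging/interpolation step. Also, $|\partial_V(W)|\geq 1$ can fail for a nonzero subspace of an infinite-dimensional algebra (a finite-dimensional left ideal has zero boundary), though for the $GK$-dimension~$1$ item this degeneracy is immaterial to ``constant profile.''

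The one step that actually fails is the prime $PI$ case. Posner's theorem does not say a prime $PI$ algebra is module-finite over its center; it says its Goldie quotient ring is a central simple algebra, finite-dimensional over the quotient field of the center. More importantly, your proposed transfer via Theorem \ref{theorem3} is inapplicable: that theorem requires $B\subset A$ to be \emph{domains}, and a prime $PI$ algebra (e.g.\ $M_k(K[x])$) need not be one. The paper's route avoids this by localizing to $Q(A)\cong M_k(D)$ with $D$ finite over its center $F$, noting $GK\dim F=GK\dim A$, and applying Proposition \ref{bigcorollary}(2) (finite module over a division subalgebra) together with Corollary \ref{localization}; this yields both bounds simultaneously. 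Your argument for this item needs to be rerouted through the quotient ring in the same way.
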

Notice that not all algebras have an isoperimetric profile of this form. In section 3.1 there is an
example which is due to Jason Bell of an algebra of $GK$-dimension $2$ but with constant
isoperimetric profile.

We will also study the relation of the isoperimetric profile with
other invariants for algebras. In particular we will discuss the
relation between the isoperimetric profile and the lower
transcendence degree introduced by Zhang in \cite{zhang}. This is
a non negative real number (or infinity) associated to an algebra
$A$ (we will give the definition later), denoted by
$\mathrm{Ld}A$, with the property that
$$
\mathrm{Ld}A\leq \mathrm{Tdeg}A\leq GK\dim A,
$$
where $\mathrm{Tdeg}A$ denotes the $GK$-transcendence degree of
$A$ (see \cite{zhang1} for the definition). In section 4.2 we show
that the isoperimetric profile is a finer invariant than the lower
transcendence degree, and we use it to answer a question in
\cite{zhang}:
\begin{Prop}
The group algebra $K\Gamma$ of an ordered semigroup $\Gamma$ is
$\mathrm{Ld}$-stable, i.e. $\mathrm{Ld}K\Gamma=GK\dim K\Gamma$.
\end{Prop}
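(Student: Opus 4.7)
The plan is to exploit the total ordering on $\Gamma$ to reduce calculations in $K\Gamma$ to combinatorial counts in $\Gamma$, and then invoke the comparison between the isoperimetric profile and the lower transcendence degree that section 4.2 is devoted to. Since the general inequality $\mathrm{Ld}K\Gamma\leq GK\dim K\Gamma$ always holds, only the reverse inequality needs to be established.

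First I would fix a finite generating set $S\subset\Gamma$ and take the generating subspace $V=\mathrm{span}_K(S\cup\{1\})$ of $K\Gamma$. Because the order on $\Gamma$ is compatible with the semigroup operation, every nonzero element of $K\Gamma$ has a well defined leading term, $K\Gamma$ is a domain, and the canonical image $\Gamma\subset K\Gamma$ is $K$-linearly independent. Gaussian elimination with respect to the leading-term order produces, for each finite dimensional $W\subset K\Gamma$, a ``triangular'' basis whose leading supports form a subset $\Omega_W\subset\Gamma$ with $|\Omega_W|=\dim W$. This step translates subspaces into subsets of $\Gamma$ without loss of dimension.

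The heart of the argument is to match the isoperimetric profile of $K\Gamma$ with respect to $V$ with the semigroup isoperimetric profile of $\Gamma$ with respect to $S$. Using the triangular basis and the multiplicativity of leading terms, one gets the lower bound $\dim(VW)\geq|(S\cup\{1\})\Omega_W|$, and hence $|\partial_V(W)|\geq|\partial_S(\Omega_W)|$; the matching upper bound is obtained by noting that no new leading term outside $S\cdot\mathrm{supp}(W)$ can be created by $K$-linear combinations. Combined, these give an asymptotic equality between $I_*(n;K\Gamma,V)$ and $I_\circ(n;\Gamma,S)$. Now for a finitely generated ordered semigroup the growth of $\Gamma$ equals $d:=GK\dim K\Gamma$, and a standard box argument (as in the commutative model covered by Theorem~\ref{theorem5}) yields $I_\circ(n;\Gamma,S)\succeq n^{(d-1)/d}$. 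Feeding this into the isoperimetric-profile-to-$\mathrm{Ld}$ comparison that section 4.2 produces forces $\mathrm{Ld}K\Gamma\geq d$, which is exactly the missing inequality.

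The main obstacle I expect is verifying the precise dictionary between the dimension-based boundary $|\partial_V(W)|$ and the cardinality-based boundary $|\partial_S(\Omega_W)|$ with enough care that one does not lose the delicate exponent $(d-1)/d$; a secondary issue is extending the argument to semigroups $\Gamma$ that are not finitely generated, which I would handle by writing $\Gamma$ as a directed union of finitely generated ordered subsemigroups and passing to the limit using the type of monotonicity underlying Theorem~\ref{theorem1}.
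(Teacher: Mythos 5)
Your reduction of the problem to the semigroup level is sound and matches the paper's strategy: the leading-term/Gaussian-elimination argument you sketch is exactly the mechanism behind the theorem of Gromov that the paper cites ($I_{\circ}(\Gamma,S)\sim I_*(K\Gamma,S)$ for an ordered semigroup), and in fact only the inequality $|\partial_V(W)|\geq|\partial_S(\Omega_W)|$ is needed, since you are after a lower bound on $I_*$. The genuine gap is in the next step. You claim that ``a standard box argument (as in the commutative model) yields $I_\circ(n;\Gamma,S)\succeq n^{(d-1)/d}$.'' Box (hypercube) arguments exhibit specific subsets with small boundary and therefore can only ever prove upper bounds $I_\circ\preceq n^{(d-1)/d}$; they say nothing about the infimum over \emph{all} $n$-element subsets. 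Moreover a general ordered semigroup of growth degree $d$ need not resemble $\mathbb{Z}_{\geq 0}^d$ at all (free semigroups are orderable), so there is no ``commutative model'' to transfer from. The lower bound you need is precisely the Coulhon--Saloff-Coste inequality (Theorem \ref{coulsaloff}), which bounds $|\partial_S(\Omega)|$ from below by $|\Omega|/(4|S|^2\Phi(2|\Omega|))$ with $\Phi$ the inverse growth function; this is the one nontrivial input of the paper's proof and it is missing from yours.

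A second, related omission: the proposition must also cover $GK\dim K\Gamma=\infty$, which already occurs for finitely generated ordered semigroups (e.g. free ones), so your directed-union remark does not address it. Here the exponent $n^{(d-1)/d}$ is meaningless, and one argues instead that $\Phi$ grows slower than any positive power of $n$, so the Coulhon--Saloff-Coste bound $n/\Phi(n)$ dominates $n^{1-1/s}$ for every $s$, forcing $\mathrm{Ld}\,K\Gamma=\infty$. With Theorem \ref{coulsaloff} substituted for the box argument and this case added, your outline becomes the paper's proof.
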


This connection allows us also to provide new examples of amenable
domains and division algebras with infinite $GK$-transcendence
degree (cf. \cite{elek2}).

In the last section of the paper we answer a question by Gromov in
\cite{gromov} Section 1.9.

The paper is divided into four sections which are organized as
follows:
\begin{itemize}
    \item In the first
section we provide definitions and basic properties of the
isoperimetric profile, particularly its connection with the notion
of amenability.
    \item In the second section we study the behavior of the isoperimetric profile
    under various ring-theoretic constructions. We will consider subalgebras, homomorphic images,
localizations, modules over subalgebras, tensor products, filtered
and associated graded algebras, Ore extensions. We will also
consider briefly the isoperimetric profile of modules.
    \item In the third section we compute the isoperimetric
    profile of many algebras, providing a proof of Theorem \ref{theorem5}.
    \item In the fourth section we discuss the relation of the
    isoperimetric profile with other invariants for algebras. In
    particular we study its relation with the lower
    transcendence degree introduced by Zhang in \cite{zhang},
    and we derive from this some consequences on amenability of
    algebras. Also, we study its relation with the growth, answering a question in \cite{gromov} Section 1.9.
\end{itemize}

\section{Definitions and basic properties}

In this section we give basic definitions and properties.

\subsection{The Isoperimetric Profile}

Unless otherwise stated, by an \textit{algebra} $A$ we will mean an
infinite dimensional associative algebra with unit $1$ over a fixed
field $K$ of characteristic $0$.\newline

Given two subspaces $V$ and $W$ of an algebra $A$ we will denote
the quotient space $V/(V\cap W)$ simply by $V/W$. Also, given a
subset $S$ of $A$ and a subspace $V$ of $A$ we define $SV:=span_K
\{sv|s\in S, v\in V\}$.

In this notation, given a subspace $V$ of $A$ and a subset $S$ of
$A$, the \textit{boundary} of $V$ with respect to $S$ is defined
by
$$
\partial_S(V):=SV/V.
$$

We will denote the dimension over $K$ of a subspace $V$ of $A$ by
$|V|$. Also, for any finite set $S$ we denote by $|S|$ its
cardinality. Hopefully this will not cause any confusion.

We are interested in the dimension of the boundary, hence we can
always assume that $1$ (the identity of $A$) is in $S$, since
$$
\partial_{S\cup\{1\}}(V)=(S\cup\{1\})V/V=(SV+V)/V\cong SV/(SV\cap V)=SV/V=\partial_S(V).
$$

It's easy to show the following inequality:
\begin{equation} \label{basic}
\tag{$\bullet$} |\partial_{ST}(V)|\leq
|\partial_{S}(V)|+|S||\partial_{T}(V)|,
\end{equation}
where $S$ and $T$ are finite subsets of $A$. Notice also that if
$S$ is a finite subset of $A$ and $V=span_KS=KS$, then
$\partial_S(W)=\partial_V(W)$ for all subspaces $W$ of $A$. Hence
the same inequality is true if we assume $S$ and $T$ to be finite
dimensional subspaces.

\begin{Def}
We define a \textit{subframe} of an algebra $A$ to be a finite
dimensional subspace containing the identity and a \textit{frame} to
be a subframe which generates the algebra. (see \cite{zhang})
\end{Def}
\begin{Remrk}
The previous discussion shows that as long as we are interested in
the dimension of the boundary $\partial_V(W)$, instead of taking
an arbitrary finite dimensional subspace $V$ of an algebra $A$, we
can take a subframe, without loosing anything.
\end{Remrk}

\begin{Conve}
In the rest of the paper by a \textit{subspace} we will always mean
\textit{finite dimensional subspace}, unless otherwise specified.
\end{Conve}

Given a subframe $V$ of $A$, in the Introduction we defined the
\textit{isoperimetric profile of $A$ with respect to $V$} (see
\cite{gromov}) to be the maximal function $I_{*}$ such that all
finite dimensional subspaces $W\subset A$ satisfy the
\textit{isoperimetric inequality}
$$
I_{*}(|W|;A,V)=I_{*}(|W|)\leq |\partial_V(W)|.
$$
Notice that for any $n\in \mathbb{N}$
$$
I_{*}(n;A,V)=I_{*}(n)=\inf |\partial_V(W)|,
$$
where the infimum is taken over all subspaces $W$ of $A$ of
dimension $n$.

We are interested in the asymptotic behavior of the function
$I_*$.
\begin{Def}
Given two functions $f_1,f_2 : \mathbb{R}_{+}\rightarrow
\mathbb{R}_+$ we say that $f_1$ is \textit{asymptotically faster}
then $f_2$, and we write $f_1\succeq f_2$, if there exist positive
constants $C_1$ and $C_2$ such that $f_1(C_1 x)\geq C_2 f_2(x)$ for
all $x\in \mathbb{R}_{+}$. We say $f_1$ is \textit{asymptotically
equivalent} to $f_2$, and we write $f_1 \sim f_2$, if $f_1\succeq
f_2$ and $f_2\succeq f_1$.
\end{Def}
\begin{Remrk}
We can always consider the function $I_*(\,\,\cdot \,\,)$ as a
function on $\mathbb{R}_{+}$, simply defining for $r\in
\mathbb{R}_{+}$, $I_*(r):=I_*(\lfloor r \rfloor)$, where $\lfloor
r \rfloor$ denotes the maximal integer $\leq r$. We will often do
it, without mentioning it explicitly.
\end{Remrk}
\begin{Def}
We say that an algebra $A$ \textit{has an isoperimetric profile}
if there exists a subframe $V$ of $A$ such that for any other
subframe $W$ of $A$ we have
$$
I_*(n;A,W)\preceq I_*(n;A,V).
$$
Otherwise we say that $A$ has no isoperimetric profile.

In case $A$ has an isoperimetric profile, we will refer to this
function, or its asymptotic behavior, as \textit{the isoperimetric
profile of $A$}, and we'll denote it also by $I_*(A)$. If the
subframe $V$ of $A$ is such that $I_*(n;A,V)$ is the isoperimetric
profile of $A$ we will say that $V$ \textit{measures} the profile
of $A$.
\end{Def}

First of all we want to observe that an arbitrary finitely
generated algebra has an isoperimetric profile. The following
proposition follows easily from (\ref{basic}).
\begin{Prop} \label{geometricindependence}
If $V$ and $W$ are two frames of $A$, then $I_*(\,\,\cdot \,\,
;A,V)\sim I_*(\,\,\cdot \,\, ;A,W)$.
\end{Prop}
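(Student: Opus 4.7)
The plan is to derive the equivalence directly from the basic inequality $(\bullet)$, exploiting the fact that each frame is eventually contained in a power of the other. Since $V$ is a frame, it generates $A$ as an algebra, so $A=\bigcup_{n\geq 1} V^n$; because $W$ is finite dimensional there exists some $n$ with $W\subseteq V^n$, and symmetrically some $m$ with $V\subseteq W^m$. These two inclusions will yield the two directions of the equivalence.

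I would then prove by induction on $n$ that there is a constant $c_n$, depending only on $n$ and $|V|$, such that $|\partial_{V^n}(U)|\leq c_n\,|\partial_V(U)|$ for every subspace $U\subset A$. The base case $n=1$ is trivial. For the inductive step, apply $(\bullet)$ with $S=V$ and $T=V^{n-1}$:
$$|\partial_{V^n}(U)|\leq |\partial_V(U)|+|V|\,|\partial_{V^{n-1}}(U)|\leq (1+|V|\,c_{n-1})\,|\partial_V(U)|,$$
so one may take $c_n=1+|V|\,c_{n-1}$.

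Finally, since $W\subseteq V^n$ and both contain $1$, the inclusion $WU\hookrightarrow V^n U$ descends to an injection of quotients $WU/U\hookrightarrow V^n U/U$, hence $|\partial_W(U)|\leq |\partial_{V^n}(U)|\leq c_n\,|\partial_V(U)|$ for every subspace $U$. Taking the infimum over subspaces $U$ of dimension $k$ gives $I_*(k;A,W)\leq c_n\,I_*(k;A,V)$, which is exactly $I_*(\,\cdot\,;A,V)\succeq I_*(\,\cdot\,;A,W)$ (with $C_1=1$ and $C_2=1/c_n$ in the definition of $\succeq$). The reverse inequality follows symmetrically from $V\subseteq W^m$, so the two profiles are asymptotically equivalent.

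There is essentially no serious obstacle here; the argument is mechanical once $(\bullet)$ is in hand. The only minor subtlety is verifying that subspace containment really does pass to a dimension inequality on boundaries, which follows from the injectivity of the induced map between quotients. Note that although $c_n$ grows with $n$, once the frames $V$ and $W$ are fixed so are $n$ and $c_n$, so $c_n$ is a genuine constant and no rescaling of the argument in the definition of $\succeq$ is even needed.
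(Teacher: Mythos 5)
Your proof is correct and is precisely the argument the paper intends when it says the proposition ``follows easily from $(\bullet)$'': iterate $(\bullet)$ to control $\partial_{V^n}$ by $\partial_V$, use that the finite-dimensional frame $W$ sits inside some power $V^n$ to bound $|\partial_W(U)|\leq c_n|\partial_V(U)|$ for every subspace $U$, pass to the infimum, and symmetrize. No gaps.
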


Observe that in a finitely generated algebra $A$, any subframe $V$
is contained in a frame $W$, and obviously $I_*(n;A,V)\leq
I_*(n;A,W)$. This together with the previous proposition shows
that $A$ has an isoperimetric profile, and any frame of $A$
measures $I_*(A)$.

We will see later examples of algebras with an isoperimetric
profile which are not finitely generated (see Example
\ref{examplenotaffine}), and we will give also an example of an
algebra which has no isoperimetric profile (see Example
\ref{examplenoprofile}).

\subsection{Isoperimetric profile and Amenability}

In a way, the isoperimetric profile measures the degree of
amenability of an algebra.

\begin{Def}
We say that an algebra $A$ is \textit{amenable} if for each
$\epsilon >0$ and any subframe $V$ of $A$, there exists a subframe
$W$ of $A$ with $|VW|\leq (1+\epsilon)|W|$. This is the so called
\textit{F\o lner condition}.

We will see a lot of examples of amenable algebras in the rest of
the paper.
\end{Def}
Notice that the F\o lner condition can be restated in the
following way using the boundary: for any subspace $V\subset A$
and $\epsilon
>0$ there exists a subspace $W\subset A$ such that
$|\partial_V(W)|/|W|\leq \epsilon$. The following proposition
follows easily from the definitions.

\begin{Prop} \label{amenable}
An algebra $A$ is amenable if and only if $I_*(n;A,V)\precneqq n$
for any subframe $V$ of $A$.
\end{Prop}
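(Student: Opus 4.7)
The idea is to unpack the strict asymptotic inequality $I_*(n;A,V)\precneqq n$ into an effective statement about relative boundaries, and then match it against the F\o lner condition.

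Since $|\partial_V(W)|\le |V|\cdot|W|$ for every finite-dimensional subspace $W$, we always have $I_*(n;A,V)\preceq n$, so $I_*(n;A,V)\precneqq n$ is equivalent to the failure of $n\preceq I_*(n;A,V)$. Unwinding the definition of $\preceq$ with $f_1(n)=n$ and $f_2=I_*(n;A,V)$, this is in turn equivalent to the statement: \emph{for every $\epsilon>0$ there exists a finite-dimensional subspace $W$ of $A$ with $|\partial_V(W)|\le \epsilon|W|$.} This reformulation is the bridge to the F\o lner condition.

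With the reformulation in hand, the forward direction is immediate. Given a subframe $V$ and $\epsilon>0$, amenability produces a subframe $W$ with $|VW|\le(1+\epsilon)|W|$; since $1\in V$ forces $W\subseteq VW$, we have $|\partial_V(W)|=|VW|-|W|\le\epsilon|W|$, exactly what the reformulation demands, so $I_*(n;A,V)\precneqq n$ for every subframe $V$.

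For the converse, given a subframe $V$ and $\epsilon>0$, the reformulation applied with $\epsilon/2$ produces a subspace $W_0$ with $|\partial_V(W_0)|/|W_0|\le\epsilon/2$. Adjoining the identity, the subframe $W:=W_0+K\cdot 1$ satisfies $|VW|\le|VW_0|+|V|\le(1+\epsilon/2)|W_0|+|V|$, hence $|VW|/|W|\le 1+\epsilon/2+|V|/|W_0|\le 1+\epsilon$ as soon as $|W_0|\ge 2|V|/\epsilon$. The principal obstacle, and the only genuinely subtle point, is ensuring that $|W_0|$ can be taken arbitrarily large. This is handled by inflating $W_0$ to $W_0+W_0 x_1+\cdots+W_0 x_k$ for generic elements $x_i\in A$: because $V$ acts on the left and right multiplication by $x_i$ is dimension-preserving, the relative $V$-boundary of the inflated subspace remains of the same order as that of $W_0$, while its dimension grows linearly in $k$. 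Modulo this technicality the converse is a direct computation, and the proposition follows.
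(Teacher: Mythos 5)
There is a genuine gap in the converse direction, and it is introduced by your reformulation. The relations $\preceq,\succeq$ are asymptotic, so the failure of $n\preceq I_*(n;A,V)$ asserts that $\liminf_n I_*(n;A,V)/n=0$, i.e.\ that for every $\epsilon>0$ there are subspaces of \emph{arbitrarily large} dimension with $|\partial_V(W)|\le\epsilon|W|$. Your reformulation retains only the existence of \emph{some} such subspace, and that is strictly weaker: in $A=K\langle x,y\rangle\times K$ the one-dimensional ideal $W_0=0\times K$ satisfies $VW_0\subseteq W_0$, hence $\partial_V(W_0)=0$, for every subframe $V$, so your condition holds for all $V$ and all $\epsilon$; yet $A$ is not amenable, since for a frame $V$ the projection $\pi_1$ onto the free factor gives $|VW|-|W|\ge |\partial_{\pi_1(V)}(\pi_1(W))|-1\ge c\,|W|$ for every subframe $W$ of sufficiently large dimension, by nonamenability of the free algebra. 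So no argument can close the converse starting from your reformulation. The inflation step you propose also fails on its own terms: right multiplication by $x_i$ need not be injective in the presence of zero divisors (in the example $W_0x\subseteq W_0$ for every $x$, so the inflated space never grows beyond dimension one), and even when it is injective the subspaces $W_0x_i$ may overlap, so neither the growth of the dimension nor the claimed control of the boundary of the inflated space is justified.

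The intended easy argument avoids all of this. Read $\precneqq$ asymptotically: the hypothesis then hands you, for each $\epsilon>0$, witnesses $W_0$ with $|W_0|$ as large as you please, and $W_0+K$ is a subframe for which your own estimate $|V(W_0+K)|\le(1+\epsilon/2)|W_0|+|V|\le(1+\epsilon)|W_0+K|$, valid once $|W_0|\ge 2|V|/\epsilon$, finishes the proof. (Symmetrically, in the forward direction one should note that if some F\o lner subframe $W$ ever satisfies $VW=W$, then $V(W+WZ)=W+WZ$ for every subspace $Z$, which again produces zero-boundary subspaces of arbitrarily large dimension, so amenability really does force $\liminf_n I_*(n)/n=0$.) The paper itself offers no proof, declaring the statement immediate from the definitions, but this is the argument it has in mind.
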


The following corollaries are immediate.
\begin{Cor} \label{nonamenable}
An algebra $A$ is nonamenable if and only if $A$ has isoperimetric
profile $I_*(n;A)\sim n$.
\end{Cor}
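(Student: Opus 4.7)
The plan is to deduce this corollary directly from Proposition \ref{amenable} by contraposition, while keeping track of the (slightly different) quantifiers: Proposition \ref{amenable} is phrased in terms of \emph{every} subframe $V$, whereas the statement of the corollary refers to the single function $I_*(A)$, which is by definition $I_*(\,\cdot\,;A,W)$ for a frame $W$ that measures the profile. So the work is just to bridge these two formulations.

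First I would negate Proposition \ref{amenable}: $A$ is nonamenable iff there exists a subframe $V$ such that $I_*(n;A,V)$ is not strictly asymptotically slower than $n$. Here I would invoke the remark made earlier in the paper that the isoperimetric profile is always asymptotically weakly sublinear, i.e.\ $I_*(n;A,V)\preceq n$ for every subframe $V$. Combined with the negation of $\precneqq$, this forces $I_*(n;A,V)\sim n$. Thus nonamenability is equivalent to the existence of a subframe $V$ with $I_*(n;A,V)\sim n$.

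Next I would pass from this subframe $V$ to a frame, so as to land on $I_*(A)$ itself. Since $A$ is finitely generated, $V$ is contained in some frame $W$. As noted right after Proposition \ref{geometricindependence}, enlarging the generating set can only increase the boundary, so $I_*(n;A,V)\leq I_*(n;A,W)$. Combined with $I_*(n;A,W)\preceq n$, this yields $I_*(n;A,W)\sim n$. By Proposition \ref{geometricindependence} any frame of $A$ measures $I_*(A)$, so $I_*(A)\sim n$. Conversely, if $I_*(A)\sim n$, then for a frame $W$ measuring the profile we have $I_*(n;A,W)\sim n$, in particular $I_*(n;A,W)$ is not $\precneqq n$, so by Proposition \ref{amenable} $A$ is not amenable.

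There is really no hard step here; the only thing that could trip one up is the mismatch between the universal quantifier in the F\o lner-type condition (over all subframes) and the statement about the single invariant $I_*(A)$, and this is resolved by the monotonicity $V\subset W\Rightarrow I_*(\,\cdot\,;A,V)\leq I_*(\,\cdot\,;A,W)$ together with the invariance of $I_*(A)$ under change of frame.
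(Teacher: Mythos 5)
Your argument is correct and is exactly the routine contraposition of Proposition \ref{amenable} that the paper leaves implicit (it states the corollary as ``immediate''). Your care with the quantifier mismatch and the monotonicity $V\subset W\Rightarrow I_*(n;A,V)\leq I_*(n;A,W)$ is the right way to fill in that gap.
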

\begin{Cor}
If all the finitely generated subalgebras of an algebra $A$ are
amenable, then $A$ is amenable.
\end{Cor}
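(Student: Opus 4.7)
The plan is to verify the F\o lner condition for $A$ by reducing to a finitely generated subalgebra and invoking the hypothesis directly. Fix an arbitrary subframe $V$ of $A$ and an arbitrary $\epsilon > 0$; we must produce a subframe $W$ of $A$ with $|VW| \leq (1+\epsilon)|W|$.

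First I would choose a basis $v_1 = 1, v_2, \ldots, v_k$ of $V$ and let $B \subseteq A$ be the subalgebra generated by these elements. Then $B$ is a finitely generated subalgebra of $A$ (hence amenable by hypothesis), and $V \subseteq B$ is a subframe of $B$, since it is finite dimensional and contains $1$.

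Next, I would apply the F\o lner condition for $B$ to the subframe $V$ and the parameter $\epsilon$, obtaining a subframe $W$ of $B$ with $|VW| \leq (1+\epsilon)|W|$. Since $W$ is a finite dimensional subspace of $B \subseteq A$ containing $1$, it is also a subframe of $A$, and the dimensions of $W$ and $VW$ (as $K$-vector spaces) are intrinsic, so the inequality holds equally well in $A$.

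There is essentially no obstacle here: the corollary is a formal consequence of the fact that the F\o lner condition only refers to finite dimensional data (a single subframe $V$ and the witness subframe $W$), and any such finite dimensional data is contained in some finitely generated subalgebra. The only point worth stating explicitly is that dimension is unambiguous whether computed inside the subalgebra or inside $A$, which justifies transporting the inequality from $B$ to $A$.
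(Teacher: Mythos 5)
Your proof is correct and is exactly the "immediate" argument the paper has in mind (the paper gives no written proof): the Følner condition only involves the finite-dimensional data $V$ and $W$, both of which live in the finitely generated subalgebra $B = K\langle V\rangle$, and the product $VW$ and all dimensions are the same whether computed in $B$ or in $A$. Nothing to add.
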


\begin{Rem} \label{remdirectsum}
The converse of the previous corollary is not true. For example, we
will show later in the paper that the algebra $A=K[x,y]\oplus
K\langle w,z \rangle$ is amenable, since we'll prove (see
Proposition \ref{directsum} and Proposition \ref{polynomialalg})
that $I_*(A)\preceq I_*(K[x,y])\sim n^{1/2}$. But it's known (cf.
\cite{tullio}) that the finitely generated subalgebra $K\langle w,z
\rangle$ (a free algebra of rank 2) is not amenable.
\end{Rem}

\subsection{Orderable semigroups and the algebra of polynomials}

Let $\Gamma$ be an infinite semigroup generated by a finite subset
$S$. Let $B(n):=\cup_{i=0}^{n}S^i$, where $S^{0}=\{1\}$ and $1$ is
the identity element of $\Gamma$. Define $\Phi(\lambda):=\min
\{n\in \mathbb{N}\mid |B(n)|>\lambda\}$ for $\lambda>0$. This is
the inverse function of the growth of $\Gamma$.

The following result is due to Coulhon and Saloff-Coste. Here they
use a slightly different definition of the boundary:
$$
\delta_S(\Omega):=\{\gamma\in \Omega\mid \text{ there exists }s\in
S\text { such that } s\gamma\notin \Omega\},
$$
where $\Omega$ is a finite subset of $\Gamma$.
\begin{Thm}[Coulhon, Saloff-Coste] \label{coulsaloff} Let $\Gamma$ be an infinite
semigroup generated by a finite subset $S$. For any finite
non-empty subset $\Omega$ of $\Gamma$ we have
$$
|\partial_{S}(\Omega)|\geq \frac{|\delta_S(\Omega)|}{|S|}\geq
\frac{|\Omega|}{4|S|^2\Phi(2|\Omega|)}.
$$
\end{Thm}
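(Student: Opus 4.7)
The plan is to prove the two inequalities separately, both via double-counting arguments that lean on cancellation in $\Gamma$ (the standard setting for this result).

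For the first inequality, for every $\gamma\in\delta_S(\Omega)$ I would pick some $s(\gamma)\in S$ with $s(\gamma)\gamma\notin\Omega$ and set $\phi(\gamma):=s(\gamma)\gamma\in\partial_S(\Omega)$. Fixing $y\in\partial_S(\Omega)$ and $s\in S$, left cancellation forces $s\gamma=y$ to have at most one solution; summing over $s\in S$ gives $|\phi^{-1}(y)|\leq|S|$, hence $|\delta_S(\Omega)|\leq|S|\cdot|\partial_S(\Omega)|$.

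For the second inequality I would apply a double-counting argument on the ball $B:=B(N)$ with $N:=\Phi(2|\Omega|)$, the crucial property being that $|B|>2|\Omega|$ by the very definition of $\Phi$. For each $\gamma\in B$, fix once and for all a factorization $\gamma=s_1 s_2 \cdots s_k$ with $k=k(\gamma)\leq N$ and letters in $S$. Given $\omega\in\Omega$ with $\gamma\omega\notin\Omega$, walk along the chain
$$
\omega,\ s_k\omega,\ s_{k-1}s_k\omega,\ \ldots,\ s_1\cdots s_k\omega=\gamma\omega,
$$
and let $j(\omega)$ be the first index at which it exits $\Omega$. The preceding entry lies in $\delta_S(\Omega)$, and sending $\omega$ to it defines a map $\psi_\gamma:\{\omega\in\Omega:\gamma\omega\notin\Omega\}\to\delta_S(\Omega)$.

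The central technical step is the fiber bound $|\psi_\gamma^{-1}(\gamma')|\leq k$: a preimage is encoded by a choice of suffix of the fixed word for $\gamma$ (giving $k$ possibilities), after which left cancellation recovers $\omega$ uniquely from $\gamma'=(\text{suffix})\cdot\omega$. This yields
$$
\sum_{\gamma\in B}|\{\omega\in\Omega:\gamma\omega\notin\Omega\}|\leq N\,|B|\,|\delta_S(\Omega)|.
$$
Evaluating the same sum in the opposite order and using right cancellation to get $|\{\gamma\in B:\gamma\omega\in\Omega\}|\leq|\Omega|$, one obtains the matching lower bound
$$
\sum_{\gamma\in B}|\{\omega\in\Omega:\gamma\omega\notin\Omega\}|\geq|\Omega|\bigl(|B|-|\Omega|\bigr)\geq\tfrac{1}{2}|\Omega||B|,
$$
where the last step uses $|B|>2|\Omega|$. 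Cancelling $|B|$ gives $|\delta_S(\Omega)|\geq|\Omega|/(2N)$, which is in fact stronger than the claimed bound (the factor $4|S|^2$ is ample slack). The main obstacle will be the fiber count for $\psi_\gamma$; once that is set up, the rest is routine bookkeeping combined with the definition of $\Phi$.
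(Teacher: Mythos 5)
Your argument is correct and is precisely the Coulhon--Saloff-Coste double-counting proof that the paper itself merely invokes (it declares the first inequality immediate from the definitions and refers to Pittet--Saloff-Coste for the second, asserting the group proof carries over verbatim to semigroups); you have simply written out the details, and your intermediate bound $|\delta_S(\Omega)|\geq |\Omega|/(2\Phi(2|\Omega|))$ is indeed stronger than what is claimed. The only caveat --- shared equally by the paper's own one-line justification --- is that both halves of the argument silently use left and right cancellation in $\Gamma$, which the bare hypothesis ``semigroup'' does not grant, although it does hold for the orderable semigroups to which the theorem is actually applied.
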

The first inequality follows from the definitions of the boundaries.
For the second one, in \cite{pittetcoste} there is a short proof for
groups: this proof works verbatim for semigroups.

It's easy to see that the free abelian semigroup on $d\in
\mathbb{N}$ generators $\mathbb{Z}^{d}_{\geq 0}$ has isoperimetric
profile $I_{\circ}(n;\mathbb{Z}^{d}_{\geq 0})\sim
n^{\frac{d-1}{d}}$. The lower bound is given by Theorem
\ref{coulsaloff}. Considering the hypercubes, we easily get the
upper bound.

Now there is a theorem by Gromov (see \cite{gromov}, Section 3) that
states that the isoperimetric profile of an orderable semigroup is
asymptotically equivalent to the isoperimetric profile of its
semigroup algebra. These two together give the following fundamental
computation, that we state here as a proposition for our
convenience.
\begin{Prop}[\cite{gromov}] \label{polynomialalg}
The isoperimetric profile of the algebra of polynomials
$A=K[x_1,\dots,x_d]$ is $I_*(n;A)\sim n^{\frac{d-1}{d}}$.
\end{Prop}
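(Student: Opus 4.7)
The plan is to reduce the computation to the combinatorial isoperimetric problem on the free abelian semigroup $\Gamma = \mathbb{Z}^d_{\geq 0}$, and then to transfer the result to $A = K[x_1,\dots,x_d]$ via the Gromov transfer theorem stated just before the proposition. Since $A$ is the semigroup algebra of $\Gamma$, and $\Gamma$ is orderable (for example, by any lexicographic order on monomials), Gromov's theorem yields $I_*(n;A) \sim I_\circ(n;\Gamma)$. Thus it suffices to prove $I_\circ(n;\Gamma) \sim n^{(d-1)/d}$ with respect to the standard generating set $S = \{e_1,\dots,e_d\}$; by Proposition \ref{geometricindependence} and its semigroup analogue the asymptotic statement is independent of these choices.

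For the lower bound I would apply Theorem \ref{coulsaloff} directly. The growth function of $\Gamma$ with respect to $S$ satisfies $|B(n)| = \binom{n+d}{d} \sim n^d$, so its inverse $\Phi(\lambda) \sim \lambda^{1/d}$. Substituting into the Coulhon--Saloff-Coste inequality with $|\Omega|=n$ gives
$$|\partial_{S}(\Omega)| \;\geq\; \frac{n}{4|S|^2\,\Phi(2n)} \;\gtrsim\; \frac{n}{n^{1/d}} \;=\; n^{(d-1)/d},$$
which is the required lower bound up to multiplicative constants.

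For the upper bound I would exhibit an explicit F\o lner-type family, namely the hypercubes $\Omega_m := \{0,1,\dots,m-1\}^d \subset \Gamma$. Then $|\Omega_m| = m^d$, and a direct count shows $\partial_S(\Omega_m) = \bigcup_{i=1}^{d}\bigl((\Omega_m + e_i)\setminus \Omega_m\bigr)$ is the union of $d$ outer $(d-1)$-dimensional faces, each of size $m^{d-1}$, so $|\partial_S(\Omega_m)| \leq d\, m^{d-1} = d\,|\Omega_m|^{(d-1)/d}$. This gives the matching upper bound along the subsequence $n=m^d$, and monotonicity together with the sublinear-interpolation argument in $\Phi$ extends it to all $n$.

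The only deep ingredient is the Gromov transfer theorem identifying the isoperimetric profile of an orderable semigroup with that of its semigroup algebra; taking this as a black box (as the excerpt explicitly does), the remainder is a routine assembly. The main point to watch is therefore not an obstacle but a verification: that $\Gamma$ is indeed orderable so that Gromov's theorem applies, and that the frame $V = K\cdot\{1,x_1,\dots,x_d\}$ on the algebra side corresponds correctly to the generating set $\{1,e_1,\dots,e_d\}$ on the semigroup side, so that the $\sim$ equivalence from the transfer theorem combines cleanly with the bounds above.
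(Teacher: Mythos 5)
Your proposal is correct and follows exactly the route the paper takes: the lower bound for $I_{\circ}(n;\mathbb{Z}^{d}_{\geq 0})$ via the Coulhon--Saloff-Coste inequality (Theorem \ref{coulsaloff}), the upper bound via hypercubes, and the transfer to $K[x_1,\dots,x_d]$ via Gromov's theorem on orderable semigroups. The extra details you supply (the computation of $\Phi$ and the face count) are straightforward elaborations of the same argument.
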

We can now give an example of an algebra which has no
isoperimetric profile.
\begin{Ex} \label{examplenoprofile}
Consider the algebra $A=K[x_1,x_2,\dots]$ of polynomials in
infinitely many variables. For any $d\in \mathbb{N}$, call
$W_d=span_K\{x_1,\dots,x_d\}$. We can consider the vector space
$V_{n}^{(d)}= span_K\{x_{1}^{m_1}\cdots x_{d}^{m_d}\mid  m_i\leq n-1
\text{ for all $i$}\}$. We have $|V_{n}^{(d)}|=n^d$ and
$|\partial_{W_d}(V_{n}^{(d)})|=dn^{d-1}=
d|V_{n}^{(d)}|^{\frac{d-1}{d}}$, which easily implies the upper
bound
$$
I_*(n;A,W_d)\preceq n^{\frac{d-1}{d}}.
$$

Now $A$ is a free $K[x_1,\dots,x_d]$-module, hence we can apply
Proposition \ref{modules}, which we will prove later, to get
$$
n^{\frac{d-1}{d}}\sim I_*(n;K[x_1,\dots,x_d],W_d)\preceq
I_*(n;A,W_d),
$$
giving $I_*(n;A,W_d)\sim n^{\frac{d-1}{d}}$.

Notice that any subspace $W\subset A$ is contained in $W_{d}^{m}$
for some $d$ and $m\in \mathbb{N}$. Hence we can apply (\ref{basic})
to see that
$$
I_*(n;A,W)\preceq I_*(n;A,W_d)\sim n^{\frac{d-1}{d}}.
$$

This shows that $A$ cannot have an isoperimetric profile.
\end{Ex}

\section{Ring-theoretic constructions}

In this section we study the behavior of the isoperimetric profile
under various ring-theoretic constructions.

\subsection{Subalgebras and homomorphic images}

In general, the isoperimetric profile for algebras does not
decrease when passing to subalgebras or homomorphic images.
\begin{Lem}
If $A$ and $B$ are two algebras, $V$ is a subframe of $A$ and $W$
is a subframe of $B$, then $I_*(n;A\oplus B,V+W)\leq I_*(n;A,V)$
and $I_*(n;A\oplus B,V+W)\leq I_*(n;B,W)$.
\end{Lem}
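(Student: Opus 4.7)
The plan is to use the test subspaces of $A$ (resp.\ $B$) themselves, embedded into $A\oplus B$ via the natural inclusion $\iota_A\colon A\hookrightarrow A\oplus B$, $a\mapsto(a,0)$, as competitors in the infimum defining $I_*(n;A\oplus B,V+W)$. First, I would check that $V+W$ really is a subframe of $A\oplus B$: viewing $V\subset A\oplus 0$ and $W\subset 0\oplus B$, the sum $V+W$ contains both $(1_A,0)$ and $(0,1_B)$, hence $1_{A\oplus B}=(1_A,1_B)$, and it is finite-dimensional.

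The key step is the following computation. Fix any subspace $U\subset A$ of dimension $n$ and set $\widetilde U:=\iota_A(U)=U\oplus 0$, still of dimension $n$ in $A\oplus B$. Expanding,
$$
(V+W)\widetilde U \;=\; V\widetilde U + W\widetilde U,
$$
where both factors are identified with their images in $A\oplus B$. Because $W\subset 0\oplus B$ and $\widetilde U\subset A\oplus 0$, componentwise multiplication gives $W\widetilde U=0$, so $(V+W)\widetilde U = V\widetilde U = \iota_A(VU)$. Therefore
$$
\partial_{V+W}(\widetilde U) \;=\; (V+W)\widetilde U/\widetilde U \;\cong\; VU/U \;=\; \partial_V(U),
$$
and in particular $|\partial_{V+W}(\widetilde U)|=|\partial_V(U)|$.

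Taking the infimum over all $n$-dimensional $U\subset A$ then forces
$$
I_*(n;A\oplus B,V+W) \;\leq\; \inf_{\dim U=n,\; U\subset A}|\partial_{V+W}(\widetilde U)| \;=\; \inf_{\dim U=n,\; U\subset A}|\partial_V(U)| \;=\; I_*(n;A,V),
$$
where the first inequality holds because the left-hand infimum is over the \emph{larger} class of all $n$-dimensional subspaces of $A\oplus B$, which contains the $\iota_A$-images. The symmetric argument with $\iota_B\colon B\hookrightarrow A\oplus B$ yields $I_*(n;A\oplus B,V+W)\leq I_*(n;B,W)$.

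There is no substantive obstacle: the entire content of the lemma is the bilinear observation that the cross-term $W\widetilde U$ (and symmetrically $V\widetilde U'$ for $\widetilde U'\subset 0\oplus B$) vanishes in the direct-sum algebra, so boundaries of pure-summand test spaces are computed entirely inside one factor. The only points that require a moment of care are the identity-containment check that makes $V+W$ a legitimate subframe, and the observation that $\iota_A$ preserves dimensions of boundaries.
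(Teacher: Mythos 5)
Your proof is correct and follows essentially the same route as the paper: identify $A$ (resp.\ $B$) with its copy in $A\oplus B$, note that the cross-term $W\widetilde U$ vanishes so that $|\partial_{V+W}(\widetilde U)|=|\partial_V(U)|$ for $U\subset A$, and take the infimum over this subclass of test spaces. The paper's proof is just a terser version of the same computation, and your extra checks (that $V+W$ is a subframe and that the embedding preserves boundary dimensions) are correct.
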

\begin{proof}
We identify $A$ and $B$ with their obvious copies in $A\oplus B$.
Let $V$ be a subframe of $A$, $W$ a subframe of $B$ and let
$Z\subset A$ be any subspace. We have
$$
|\partial_{V+W}(Z)|=|\partial_{V}(Z)|,
$$
where the second boundary is in the algebra $A$. This proves the
first inequality. The second is proved in the same way.
\end{proof}

We have the following immediate consequence.
\begin{Prop} \label{directsum}
If $A$ and $B$ are two finitely generated algebras, then
$I_*(A\oplus B)\preceq I_*(A)$ and $I_*(A\oplus B)\preceq I_*(B)$.
\end{Prop}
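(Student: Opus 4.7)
The proof should be essentially immediate from the preceding Lemma together with Proposition \ref{geometricindependence}. My plan is to reduce the asymptotic statement to the pointwise statement already established in the Lemma.

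First I would choose a frame $V$ of $A$ and a frame $W$ of $B$. Since $A$ and $B$ are finitely generated, such frames exist. I would then verify that $V+W$ is a frame of $A\oplus B$: it is finite dimensional, it contains the identity $(1_A,1_B)=1_A+1_B\in V+W$ of $A\oplus B$ (where we identify $A$ and $B$ with their copies inside $A\oplus B$ as in the Lemma), and it generates $A\oplus B$ because multiplying elements of $V$ produces all of $A\oplus\{0\}$ and multiplying elements of $W$ produces all of $\{0\}\oplus B$, so $V+W$ generates the whole direct sum as an algebra.

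With $V+W$ being a frame of $A\oplus B$, the Lemma yields the pointwise bounds
$$
I_*(n;A\oplus B,V+W)\leq I_*(n;A,V)\quad\text{and}\quad I_*(n;A\oplus B,V+W)\leq I_*(n;B,W).
$$
By the observation following Proposition \ref{geometricindependence}, any frame of a finitely generated algebra measures its isoperimetric profile up to $\sim$-equivalence. Hence $I_*(n;A\oplus B,V+W)\sim I_*(A\oplus B)$, $I_*(n;A,V)\sim I_*(A)$, and $I_*(n;B,W)\sim I_*(B)$. Combining with the pointwise bounds and passing to $\preceq$ gives the two claimed inequalities $I_*(A\oplus B)\preceq I_*(A)$ and $I_*(A\oplus B)\preceq I_*(B)$.

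There is no real obstacle here: once one notices that sums of frames give frames of direct sums, the Proposition is a one-line consequence of the Lemma, which is why the excerpt flags it as an \emph{immediate consequence}. The only minor point worth stating carefully is the verification that $V+W$ is in fact a frame (i.e. contains the identity of $A\oplus B$ and generates $A\oplus B$), so that Proposition \ref{geometricindependence} applies and allows us to promote the Lemma's pointwise inequality to an asymptotic comparison of isoperimetric profiles.
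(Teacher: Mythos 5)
Your proof is correct and is exactly the argument the paper leaves implicit when it calls the Proposition an immediate consequence of the Lemma: take frames $V$ of $A$ and $W$ of $B$, check that $V+W$ is a frame of $A\oplus B$, and use Proposition \ref{geometricindependence} (any frame of a finitely generated algebra measures its profile) to upgrade the Lemma's pointwise inequalities to the asymptotic ones. The one point genuinely worth spelling out --- that $V+W$ contains $1_A+1_B=1_{A\oplus B}$ and generates $A\oplus B$ --- you have handled correctly.
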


Observe that $A$ is a subalgebra of $A\oplus B$, and also $A$ is
isomorphic to a homomorphic image of $A\oplus B$. If we now consider
a direct sum $A\oplus B$ of two finitely generated algebras with
$I_*(A)\precneqq I_*(B)$ (cf. Remark \ref{remdirectsum}), it follows
immediately from the previous proposition that we do not have in
general inequality for subalgebras and homomorphic images.

From this and what we saw in the previous sections it follows for
example that amenability for algebras does not pass to quotients
and subalgebras (see also \cite{tullio}).

We already observed in the Introduction (after Theorem
\ref{theorem3}) that this phenomenon can occur also when we deal
with domains.

\subsection{Localization}

The isoperimetric profile behaves well with nice localizations.

If $A$ is an algebra, a \textit{right Ore set} $\Omega\subseteq A$
is a multiplicative closed subset of $A$ which satisfies the
\textit{right Ore condition}, i.e. $cA\cap a\Omega \neq \emptyset$
for all $c\in \Omega$ and $a\in A$. If all the elements of
$\Omega$ are regular, we can consider the ring of right fractions
$A\Omega^{-1}$, and identify $A$ with the subset $\{a/1\mid a\in
A\}\subseteq A\Omega^{-1}$.

There are analogous left versions of these notions.

Notice that we will have slightly different results for the left and
the right cases in this section. This depends on the fact that the
definition of the boundary is not symmetric.
\begin{Lem} \label{lemmalocal}
Let $A$ be an algebra and let $\Omega$ be a right Ore set of
regular elements in (i) and (ii) and a left Ore set of regular
elements in (iii).
\begin{itemize}
  \item[(i)] If $V$ is a subframe of $A$, then
    $$
    I_*(n;A,V)=I_*(n;A\Omega^{-1},V).
    $$
  \item[(ii)] If $W$ is a subframe of $A\Omega^{-1}$, then we
can find an $m\in \Omega$ such that $Wm\subset A\subset
A\Omega^{-1}$. For any such $m$
    $$
    I_*(n;A\Omega^{-1},W)\leq I_*(n;A,Wm+K).
    $$
  \item[(iii)]  If $W$ is a subframe of $\Omega^{-1}A$, we can
find an $m\in \Omega$ such that $mW\subset A\subset \Omega^{-1}A$.
For any such $m$
    $$
    I_*(n;\Omega^{-1}A,W)\leq I_*(n;A,mW+K).
    $$
\end{itemize}
\end{Lem}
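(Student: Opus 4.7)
My plan for each of the three parts is to produce, for any candidate subspace appearing on the right-hand side, an equally good or better candidate on the left. The unifying tool is that $m$ becomes a unit in the appropriate localization, so multiplication by $m$ is a $K$-linear automorphism that preserves both dimensions and intersections.

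For (i), the easy direction $I_*(n;A\Omega^{-1},V)\leq I_*(n;A,V)$ follows from the inclusion $A\hookrightarrow A\Omega^{-1}$: any $n$-dimensional $Z\subset A$ is automatically a subspace of $A\Omega^{-1}$ with the same boundary relative to $V$. For the reverse, I would start with an arbitrary $Z'\subset A\Omega^{-1}$ of dimension $n$ and use the right Ore condition, applied to a basis of $Z'$, to find a common right denominator $m\in\Omega$ with $Z'm\subset A$. Since $m$ is a unit in $A\Omega^{-1}$, right multiplication by $m$ is a $K$-linear bijection that sends $Z'\to Z'm$, $VZ'\to VZ'm$, and $VZ'\cap Z'\to VZ'm\cap Z'm$; hence $|\partial_V(Z'm)|=|\partial_V(Z')|$, exhibiting $Z'm\subset A$ as an $n$-dimensional competitor.

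For (ii), the same common-denominator trick produces $m\in\Omega$ with $Wm\subset A$. The key idea is the substitution $Z':=mZ$: for an arbitrary $Z\subset A$ of dimension $n$, regularity of $m$ forces $|mZ|=n$, and by associativity $W(mZ)=(Wm)Z\subset A$. Since $1\in W$ gives $mZ\subset WmZ$, I obtain
\[
|\partial_W(mZ)| = |WmZ|-n \quad\text{and}\quad |\partial_{Wm+K}(Z)| = |WmZ+Z|-n,
\]
and the trivial inclusion $WmZ\subset WmZ+Z$ yields $|\partial_W(mZ)|\leq|\partial_{Wm+K}(Z)|$. Taking the infimum over $Z\subset A$ of dimension $n$ closes the part.

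Part (iii) is the mirror statement, proved with the even simpler choice $Z':=Z$. With $mW\subset A$ in hand, $|\partial_W(Z)|=|WZ|-n$ in $\Omega^{-1}A$ while $|\partial_{mW+K}(Z)|=|mWZ+Z|-n$, and since left multiplication by $m$ is an automorphism of $\Omega^{-1}A$ carrying $WZ$ to $mWZ$, we get $|WZ|=|mWZ|\leq|mWZ+Z|$ as needed. The one genuinely subtle point I anticipate is the choice of substitution in (ii): the naive attempts $Z'=Z$ or $Z'=Zm^{-1}$ both reduce to the comparison $|WZ|\leq|WmZ+Z|$ between subspaces living in different ambient rings, and I do not see a direct injection; the trick $Z'=mZ$ sidesteps this by pushing $m$ into $Wm\subset A$ before any boundary is formed, so the comparison collapses to an obvious inclusion.
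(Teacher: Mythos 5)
Your proposal is correct and follows essentially the same route as the paper: the common-denominator trick for (i), the substitution $Z\mapsto mZ$ combined with the inclusion $WmZ\subseteq WmZ+Z$ for (ii), and the injectivity of left multiplication by $m$ for (iii) are exactly the steps in the paper's proof. No gaps.
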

\begin{proof}
(i) Let $V$ be a subframe of $A$. Of course $V$ is also a subframe
of $A\Omega^{-1}$. Given any subspace $Z$ of $A\Omega^{-1}$,
clearly we can find an element $m\in \Omega$ such that
$Zm\subseteq A\subseteq A\Omega^{-1}$. We have
$$
|\partial_V(Zm)|=|VZm|-|Zm| = |VZ|-|Z|= |\partial_{V}(Z)|.
$$
Hence
$$
I_*(n;A,V)\leq I_*(n;A\Omega^{-1},V),
$$
which implies
$$
I_*(n;A,V)=I_*(n;A\Omega^{-1},V).
$$

(ii) Given now a subframe $W$ of $A\Omega^{-1}$, again we can find
an $m\in \Omega$ such that $Wm\subset A\subset A\Omega^{-1}$. If
$Z$ is a subspace of $A$, we have
$$
|\partial_W(mZ)|=|WmZ|-|mZ|\leq|WmZ+Z|-|Z|=|\partial_{Wm+K}(Z)|.
$$

The above inequality shows that
$$
I_*(n;A\Omega^{-1},W)\leq I_*(n;A,Wm+K).
$$

(iii) Suppose that $W$ is a subframe of $\Omega^{-1}A$. As before
we can find an $m\in \Omega$ such that $mW\subset A\subset
\Omega^{-1}A$. If $Z$ is a subspace of $A$, we have
$$
|\partial_W(Z)|=|WZ|-|Z| \leq |mWZ+Z|-|Z|= |\partial_{mW+K}(Z)|.
$$

The above inequality gives
$$
I_*(n;\Omega^{-1}A,W)\leq I_*(n;A,mW+K).
$$
\end{proof}

The following corollary follows easily from this lemma. It's a more
general version of Theorem \ref{theorem1}.

\begin{Cor} \label{localization}
Let $A$ be an algebra and let $\Omega$ be a right Ore set of
regular elements in (i) and a left Ore set of regular elements in
(ii). Then
\begin{itemize}
    \item[(i)] $A$ has an isoperimetric profile if and only if $A\Omega^{-1}$
    does, and in this case $I_*(A)\sim I_*(A\Omega^{-1})$. Moreover, any
    subframe of $A$ that measures $I_*(A)$, measures also $I_*(A\Omega^{-1})$,
    and viceversa if $W$ measures $I_*(A\Omega^{-1})$, then for any $m\in \Omega$
    such that $Wm\subset A$, $Wm+K$ measures $I_*(A)$.
    \item[(ii)] If both $A$ and $\Omega^{-1}A$ have isoperimetric profiles, then
    $I_*(\Omega^{-1}A)\preceq I_*(A)$.
\end{itemize}
\end{Cor}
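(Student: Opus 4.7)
The plan is to deduce the corollary from Lemma \ref{lemmalocal} by chasing the inequalities it provides, treating (i) and (ii) separately.

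For part (i), I would first use part (i) of the lemma to record the identity $I_*(n;A,V)=I_*(n;A\Omega^{-1},V)$ for every subframe $V$ of $A$; this already computes the profile of $A\Omega^{-1}$ on any subframe inherited from $A$. The key reduction is part (ii) of the lemma: given an arbitrary subframe $W$ of $A\Omega^{-1}$, it produces $m\in\Omega$ with $Wm\subset A$ and the bound $I_*(n;A\Omega^{-1},W)\leq I_*(n;A,Wm+K)$, which lets me convert comparisons inside $A\Omega^{-1}$ into comparisons inside $A$. Concretely, if $V$ measures $I_*(A)$, then for every subframe $W$ of $A\Omega^{-1}$ I chain
$$I_*(n;A\Omega^{-1},W)\leq I_*(n;A,Wm+K)\preceq I_*(n;A,V)=I_*(n;A\Omega^{-1},V),$$
which shows $V$ also measures $I_*(A\Omega^{-1})$, and simultaneously yields $I_*(A)\sim I_*(A\Omega^{-1})$ together with the "any subframe of $A$ that measures $I_*(A)$ measures also $I_*(A\Omega^{-1})$" clause.

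For the converse direction of (i), suppose $W$ measures $I_*(A\Omega^{-1})$; pick $m\in\Omega$ with $Wm\subset A$ and set $V=Wm+K$, a subframe of $A$. For any subframe $V'$ of $A$, regard $V'$ as a subframe of $A\Omega^{-1}$ and chain
$$I_*(n;A,V')=I_*(n;A\Omega^{-1},V')\preceq I_*(n;A\Omega^{-1},W)\leq I_*(n;A,V),$$
where the last inequality comes from part (ii) of the lemma applied to $W$. This simultaneously proves that $A$ has an isoperimetric profile, that $V=Wm+K$ measures it, and the asymptotic equivalence of the two profiles. Putting both directions together covers all four assertions of part (i).

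Part (ii) is a one-step argument. Let $V$ measure $I_*(A)$ and $W$ measure $I_*(\Omega^{-1}A)$. Part (iii) of the lemma supplies $m\in\Omega$ with $mW\subset A$ and $I_*(n;\Omega^{-1}A,W)\leq I_*(n;A,mW+K)$, and the fact that $V$ measures $I_*(A)$ then gives
$$I_*(\Omega^{-1}A)=I_*(n;\Omega^{-1}A,W)\leq I_*(n;A,mW+K)\preceq I_*(n;A,V)=I_*(A).$$
The reason we only get one-way comparison here, in contrast to the equivalence in (i), is the asymmetry in the definition of the boundary $\partial_V(W)=VW/W$: in a \emph{left} localization, the obstruction from Lemma \ref{lemmalocal}(iii) only runs in one direction. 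There is no serious obstacle in the argument beyond careful bookkeeping of which subframe is used where, and making sure the implicit constants in $\preceq$ are tracked correctly through each application of the lemma.
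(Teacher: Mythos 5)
Your proposal is correct and is exactly the argument the paper intends: the paper offers no separate proof beyond the remark that the corollary "follows easily" from Lemma \ref{lemmalocal}, and your chains of inequalities (using part (i) of the lemma to transfer profiles of subframes of $A$ upward, part (ii) to pull arbitrary subframes of $A\Omega^{-1}$ back down to $Wm+K\subset A$, and part (iii) for the one-sided bound in the left localization) are the standard way to fill in those details. No gaps.
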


\begin{Remrk}
In \cite{zhang}, the remark after Proposition 2.1 may suggest that
$I_*(A)\preceq I_*(\Omega^{-1}A)$ is not true in general.
\end{Remrk}
We can now give an example of an algebra with an isoperimetric
profile, which is not finitely generated.
\begin{Ex} \label{examplenotaffine}
If $A=K[x_1,\dots ,x_d]$ is the algebra of polynomials in $d$
variables, then we already saw that $I_*(A)\sim n^{\frac{d-1}{d}}$.
If we denote as usual by $K(x_1,\dots,x_d)$ the quotient field of
$A$, using the previous corollary we have
$$
I_*(K(x_1,\dots,x_d))\sim n^{\frac{d-1}{d}}.
$$

Notice that $K(x_1,\dots,x_d)$ is not finitely generated as an
algebra.
\end{Ex}

Another immediate consequence of this corollary is for example that
$I_*(K[x_{1}^{\pm 1},\dots, x_{d}^{\pm 1}])\sim n^{\frac{d-1}{d}}$,
where $K[x_{1}^{\pm 1},\dots, x_{d}^{\pm 1}]$ is the algebra of
Laurent polynomials in $d$ variables (see \cite{gromov}).

The following consequences on the amenability of a localization
follow easily from Lemma \ref{lemmalocal} and Proposition
\ref{amenable}.

\begin{Cor}
Let $A$ be an algebra and let $\Omega$ be a right Ore set of
regular elements in (i) and a left Ore set of regular elements in
(ii). Then
\begin{itemize}
  \item[(i)] $A$ is amenable if and only if $A\Omega^{-1}$ is
  amenable.
  \item[(ii)] If $A$ is amenable, then $\Omega^{-1}A$ is amenable.
\end{itemize}
\end{Cor}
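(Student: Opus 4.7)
The plan is to reduce the corollary to a direct application of Proposition \ref{amenable} (amenability is equivalent to $I_*(n;A,V)\precneqq n$ on every subframe $V$) combined with the three inequalities supplied by Lemma \ref{lemmalocal}. In effect, each direction of (i) and the implication in (ii) should be obtained by translating an arbitrary subframe of one algebra into a comparable subframe of the other and then transporting the strict sublinear bound.

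For the forward direction of (i), I would pick an arbitrary subframe $W$ of $A\Omega^{-1}$, invoke Lemma \ref{lemmalocal}(ii) to produce $m\in\Omega$ with $Wm\subset A$ and the inequality $I_*(n;A\Omega^{-1},W)\leq I_*(n;A,Wm+K)$, observe that $Wm+K$ is a subframe of $A$ (finite dimensional and containing the identity), and then apply the amenability of $A$ via Proposition \ref{amenable} to conclude $I_*(n;A\Omega^{-1},W)\precneqq n$. The reverse direction is even quicker: given a subframe $V$ of $A$, note that $V$ is still a subframe of $A\Omega^{-1}$, and Lemma \ref{lemmalocal}(i) gives the equality $I_*(n;A,V)=I_*(n;A\Omega^{-1},V)$, so a strict sublinear bound on one side is precisely a strict sublinear bound on the other.

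Part (ii) proceeds in the same style, now using Lemma \ref{lemmalocal}(iii): starting from an arbitrary subframe $W$ of $\Omega^{-1}A$, I would choose $m\in\Omega$ with $mW\subset A$, apply the inequality $I_*(n;\Omega^{-1}A,W)\leq I_*(n;A,mW+K)$, and use amenability of $A$ applied to the subframe $mW+K$ to obtain $I_*(n;\Omega^{-1}A,W)\precneqq n$, whence $\Omega^{-1}A$ is amenable.

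There is really no genuine obstacle here. The only things to verify are that $Wm+K$ and $mW+K$ qualify as subframes, which is immediate from their construction, and that the relation $\precneqq n$ is preserved under the $\leq$ inequalities of Lemma \ref{lemmalocal}; this holds simply because if $f\leq g$ pointwise and $g\precneqq n$ then $f\precneqq n$ as well. The whole argument is thus a routine assembly of the preceding lemma and proposition, and the explanation for why (ii) only yields a one-way implication (rather than a biconditional as in (i)) is exactly that Lemma \ref{lemmalocal} supplies no analogue of part (i) on the left Ore side.
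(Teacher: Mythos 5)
Your argument is correct and is exactly the derivation the paper intends: the paper states that this corollary ``follows easily from Lemma \ref{lemmalocal} and Proposition \ref{amenable}'' without writing out the details, and your assembly of parts (i), (ii), (iii) of that lemma with the subframes $Wm+K$ and $mW+K$ is the intended routine verification.
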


\subsection{Subadditivity}

\begin{Def}
We say that a function $f:\mathbb{R}_{+}\rightarrow
\mathbb{R}_{+}$ is (\textit{asymptotically}) \textit{subadditive}
if there exist positive constants $C_1,C_2>0$ such that for every
finite set of positive real numbers $r_1,\dots,r_k$ we have
$$
C_2f(C_1(r_1+\cdots +r_k))\leq f(r_1)+\cdots +f(r_2).
$$
\end{Def}
\begin{Ex}
The function $f(x)=x^{\alpha}$ for $0\leq \alpha \leq 1$ is
subadditive with constants $C_1=C_2=1$.

For example the isoperimetric profile of an infinite group is
subadditive with constants $C_1=C_2=1$ (cf. \cite{gromov}).
\end{Ex}

The following lemma motivates our definition of subadditivity.
\begin{Lem} \label{subadditive}
Given two functions $f,g: \mathbb{R}_{+} \rightarrow
\mathbb{R}_{+}$, if $f\sim g$, then $f$ is subadditive if and only
if $g$ is.
\end{Lem}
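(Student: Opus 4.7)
The plan is to prove the two directions by symmetry, so it suffices to show that if $f \sim g$ and $f$ is subadditive, then $g$ is subadditive. The whole argument is a sandwich: convert $g$ to $f$ on the left using one half of the asymptotic equivalence, apply subadditivity of $f$, then convert $f$ back to $g$ on the right using the other half. The only real issue is bookkeeping: the rescaling factors introduced by each conversion must be absorbed into a single constant in front of $r_1+\cdots+r_k$.

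More precisely, I would first unpack the hypothesis $f\sim g$ into the two inequalities
$$
g(x)\leq \tfrac{1}{C_2}\,f(C_1 x),\qquad f(x)\leq \tfrac{1}{C_2'}\,g(C_1' x),\qquad x\in\mathbb{R}_+,
$$
and the subadditivity of $f$ into constants $D_1,D_2>0$ with $D_2\,f(D_1\sum s_i)\leq \sum f(s_i)$ for every finite tuple $(s_1,\ldots,s_k)$ of positive reals.

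Next, given positive reals $r_1,\ldots,r_k$, I would set $E_1:=D_1/(C_1 C_1')$ and perform the three-step chain. First,
$$
g\!\left(E_1\sum_i r_i\right)\;\leq\;\tfrac{1}{C_2}\,f\!\left(C_1 E_1\sum_i r_i\right)\;=\;\tfrac{1}{C_2}\,f\!\left(D_1\sum_i \tfrac{r_i}{C_1'}\right).
$$
Then, applying subadditivity of $f$ to the tuple $(r_1/C_1',\ldots,r_k/C_1')$,
$$
\tfrac{1}{C_2}\,f\!\left(D_1\sum_i \tfrac{r_i}{C_1'}\right)\;\leq\;\tfrac{1}{C_2 D_2}\sum_i f\!\left(\tfrac{r_i}{C_1'}\right).
$$
Finally, using $f(y)\leq (C_2')^{-1}g(C_1'y)$ with $y=r_i/C_1'$,
$$
\tfrac{1}{C_2 D_2}\sum_i f\!\left(\tfrac{r_i}{C_1'}\right)\;\leq\;\tfrac{1}{C_2 D_2 C_2'}\sum_i g(r_i).
$$
Combining, $E_2\,g(E_1\sum_i r_i)\leq \sum_i g(r_i)$ with $E_2:=C_2 D_2 C_2'$, which is exactly subadditivity of $g$ with constants $(E_1,E_2)$.

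The step that requires care is the choice of $E_1$: it must be calibrated so that after the first conversion (which multiplies the argument by $C_1$) and the last conversion (which would otherwise leave a $C_1'$ factor inside $g$), the innermost arguments on the right are exactly $r_i$. Choosing $E_1=D_1/(C_1 C_1')$ is precisely what makes $C_1 E_1/D_1=1/C_1'$, so that the subadditivity of $f$ applied to $s_i=r_i/C_1'$ produces $f(r_i/C_1')$, which the second conversion turns into $g(r_i)$ with no stray scaling. No other serious obstacle arises; monotonicity is not needed anywhere because all scalings are absorbed inside a single ``$E_1$'' constant in the definition of subadditivity.
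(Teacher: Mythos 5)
Your proof is correct. The paper states this lemma without proof (treating it as routine), and your three-step sandwich --- convert $g$ to $f$, apply subadditivity of $f$ to the rescaled tuple $(r_i/C_1')$, convert back to $g$ --- with the calibration $E_1=D_1/(C_1C_1')$ is exactly the intended verification; the constants all check out and the symmetry reduction to one direction is legitimate.
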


We now show that the isoperimetric profile of a domain is
subadditive. We need the following proposition, which was showed
to me by Zelmanov.
\begin{Prop}[Zelmanov] \label{zelmanov}
Let $A$ be a domain over $K$, and let $V$ and $W$ be finite
dimensional subspaces of $A$, with $|V|=m$ and $|W|=n$. If $V\cap
Wa\neq \{0\}$ for all $a\in A\setminus \{0\}$, then $A$ is
algebraic of bounded degree.
\end{Prop}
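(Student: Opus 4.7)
The strategy is to fix an arbitrary nonzero $a \in A$ and bound the $K$-dimension $d$ of $K[a]$ in terms of $m$ and $n$. First I would replace $V, W$ by $V + K, W + K$ so that $1 \in V \cap W$, which preserves the hypothesis at the cost of adding $1$ to each of $m, n$. Then the $K[a]$-submodule $M = WK[a] = \sum_{k \geq 0} Wa^k$ of $A$ is finitely generated (by $W$) and torsion-free (as a submodule of the domain $A$).

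The first step is to rule out the case $a$ transcendental. In that case $K[a] \cong K[x]$ is a PID, $M$ is free of rank $r \leq n$, and fixing a $K[a]$-basis identifies $M$ with $K[x]^r$, giving each nonzero element of $M$ a well-defined \emph{degree} (the maximum of its coordinate polynomial degrees). Since $W$ and $V \cap M$ are finite-dimensional $K$-subspaces of $M$, their elements have degrees bounded by constants $D_0$ and $D_1$ respectively. But every nonzero element of $Wa^k$ has each coordinate polynomial divisible by $x^k$, and hence has some coordinate of degree at least $k$; so for $k > D_1$ we conclude $V \cap Wa^k = 0$, contradicting the hypothesis. Thus $a$ must be algebraic.

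For the bound on $d$, consider the projection $\pi : M \to M/(V \cap M)$, noting that $V \cap M$ has $K$-dimension $s \leq m$. The kernel of $\pi$ restricted to $Wa^k$ is exactly $V \cap Wa^k$, which is nonzero by hypothesis, so $\dim_K \pi(Wa^k) \leq n - 1$. Since $a$ is algebraic of degree $d$, we have $M = \sum_{k=0}^{d-1} Wa^k$, hence $\pi(M) = \sum_{k=0}^{d-1} \pi(Wa^k)$, which yields $\dim_K \pi(M) = rd - s \leq d(n-1)$. When $W$ has full $K[a]$-rank $r = n$ in $M$, this rearranges to $d \leq s \leq m$, closing the argument. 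The main technical obstacle is the case $r < n$, where $W$ contains nontrivial $K[a]$-linear relations (effectively a polynomial identity for $a$ with coefficients in $W$), and one must exploit these relations to bound $d$, either by reducing to a subspace $W' \subseteq W$ that achieves full rank, or by refining the dimension count above to incorporate the rank deficit $n-r$.
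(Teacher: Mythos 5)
Your argument for ruling out transcendental $a$ is correct and complete (and is a genuinely different, cleaner route than the paper takes for that part: the module $M=WK[a]$ is indeed finitely generated and torsion-free, hence free over the PID $K[a]$, and the degree count in coordinates does force $V\cap Wa^k=0$ for large $k$). The problem is that the proposition asserts \emph{bounded} degree, and your bound $d\le m$ is only established when $W$ is $K[a]$-linearly independent, i.e. $r=n$. The case $r<n$ is not a side issue you can defer: it is the generic case, and neither of your proposed repairs works as stated. Passing to a full-rank $W'\subseteq W$ fails because the hypothesis $V\cap Wa^k\neq\{0\}$ is a statement about $W$, not about $W'$; the nonzero element of $V\cap Wa^k$ may have nonzero components along the generators you discarded, so $V\cap W'a^k$ can well be zero. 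And the natural refinement of your count only yields $d(r-n+1)\le s\le m$, which is vacuous once $r\le n-1$. A concrete illustration: take $A=K[a]$ itself a field of degree $d$ over $K$ and $V,W$ arbitrary subspaces with $m+n>d$; then $\dim(V\cap Wb)\ge m+n-d\ge 1$ for every nonzero $b$, the hypothesis holds, $r=1$, and $d$ can be as large as $m+n-1$. This shows both that the bound must involve $n$ as well as $m$ and that the rank-deficient case carries all the content, so the missing step is not a routine patch. Note also that the bounded degree is exactly what the paper needs downstream (Jacobson's theorem on local finiteness requires bounded, not merely pointwise, algebraicity), so the gap cannot be waved away by settling for ``$A$ is algebraic.''

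For comparison, the paper's proof attacks precisely this difficulty head-on: from $V\cap Wa^i\neq\{0\}$ for $i=0,\dots,m$ it extracts, for each of $m+1$ well-separated powers $a^{(m+1)^i}$, a relation $\sum_j w_j f_{ij}(a^{(m+1)^i})=0$ with the $f_{ij}$ of degree at most $m$; the vanishing of $\det\|f_{ij}(a^{(m+1)^i})\|$ then gives a polynomial of degree bounded in terms of $m,n$ annihilating $a$, and a separate argument (choosing a minimal dependent set of rows and reducing modulo a power of $t$) handles the degenerate case where that determinant is identically zero as a polynomial in $t$. That degenerate case is the analogue of your $r<n$ case, and the work the paper invests there is a fair measure of what your proposal still owes. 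If you want to keep your module-theoretic framework, you would need to show how the $K[a]$-relations among the $w_j$ (with coefficients of a priori uncontrolled degree) can be converted into an annihilating polynomial for $a$ of degree bounded by a function of $m$ and $n$ alone; as written, the proposal does not do this.
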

To prove this proposition we need the following lemma.
\begin{Lem}
In the hypothesis of the previous proposition, let $\{w_1,\dots
,w_n\}$ be a basis of $W$. Then for any nonzero element $a\in A$
there exist polynomials $f_1(t),\dots ,f_n(t)$, not all zero and
all of degree $\leq m$ such that
$$
w_1f_1(a)+\cdots +w_nf_n(a)=0.
$$
\end{Lem}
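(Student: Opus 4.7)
The plan is to exploit the hypothesis coordinate by coordinate for the powers $a^0, a^1, \ldots, a^m$ of the fixed nonzero element $a$, and then produce the desired polynomial relation by a pigeonhole argument inside the $m$-dimensional space $V$.

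First, since $A$ is a domain and $a \neq 0$, every power $a^i$ (for $i = 0, 1, \ldots, m$) is nonzero; taking $i = 0$ uses that $1 \in A$ and the hypothesis applied to $a = 1$ gives $V \cap W \neq \{0\}$. Thus for each $i \in \{0, 1, \ldots, m\}$ the hypothesis yields a nonzero element $v_i \in V \cap Wa^i$. Since $v_i \in Wa^i$ we may write $v_i = w_i' a^i$ for some $w_i' \in W$, and because $v_i \neq 0$ and $A$ is a domain, necessarily $w_i' \neq 0$. Expanding in the given basis, write $w_i' = \sum_{j=1}^{n} c_{ij} w_j$ with $c_{ij} \in K$, where for each $i$ at least one $c_{ij}$ is nonzero.

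Now I would use pigeonhole: the vectors $v_0, v_1, \ldots, v_m$ are $m+1$ elements of the $m$-dimensional space $V$, so there exist scalars $\lambda_0, \ldots, \lambda_m \in K$, not all zero, with $\sum_{i=0}^{m} \lambda_i v_i = 0$. Substituting $v_i = w_i' a^i = \sum_j c_{ij} w_j a^i$ and collecting by $w_j$, this relation becomes
$$
\sum_{j=1}^{n} w_j \Bigl(\sum_{i=0}^{m} \lambda_i c_{ij} a^i \Bigr) = 0.
$$
Setting $f_j(t) := \sum_{i=0}^{m} \lambda_i c_{ij}\, t^i$ gives polynomials of degree at most $m$ satisfying $\sum_j w_j f_j(a) = 0$.

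The one thing left to check, which is the only potential obstacle, is that not all $f_j$ vanish identically. If every $f_j$ were the zero polynomial, then $\lambda_i c_{ij} = 0$ for all $i, j$; picking any $i$ with $\lambda_i \neq 0$ (which exists by choice of the $\lambda_i$) forces $c_{ij} = 0$ for all $j$, hence $w_i' = 0$, contradicting what was established above. This contradiction completes the argument.
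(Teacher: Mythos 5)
Your argument is correct and is essentially the paper's own proof: both take a nonzero element of $V\cap Wa^i$ for $i=0,\dots,m$, use that $m+1$ vectors in the $m$-dimensional space $V$ are linearly dependent, collect the resulting relation by the basis $w_j$ to define the $f_j$, and rule out all $f_j$ vanishing by noting that a nonzero dependence coefficient would force the corresponding element of $W$ to be zero. The only cosmetic difference is that the nonvanishing of $w_i'$ follows directly from $v_i=w_i'a^i\neq 0$ without invoking the domain hypothesis.
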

\begin{proof}
Given $0\neq a\in A$, we have $V\cap W1\neq \{0\},\, V\cap Wa\neq
\{0\},\dots, \, V\cap Wa^m\neq \{0\}.$ Hence there are
coefficients $\alpha_{ij}\in K$ such that
\begin{eqnarray*}
0 & \neq & \alpha_{01}w_1+\cdots + \alpha_{0n}w_n\in V,\\
0 & \neq & \alpha_{11}w_1a+\cdots + \alpha_{1n}w_na\in V,\\
 & & \vdots\\
0 & \neq & \alpha_{m1}w_1a^m+\cdots + \alpha_{mn}w_na^m\in V.\\
\end{eqnarray*}

Since $|V|=m$, these elements are linearly dependent, hence there
exist $\beta_0,\dots,\beta_m$ not all zero such that
$$
\beta_0(\alpha_{01}w_1+\cdots +\alpha_{0n}w_n)+
\beta_1(\alpha_{11}w_1a+\cdots +\alpha_{1n}w_na)+
\cdots\qquad\qquad\qquad
$$
$$
\qquad\qquad\qquad\qquad\qquad\cdots
+\beta_m(\alpha_{m1}w_1a^m+\cdots +\alpha_{mn}w_na^m)=0,
$$
which implies
$$
w_1(\beta_0\alpha_{01}+\beta_1\alpha_{11}a+\cdots
+\beta_m\alpha_{m1}a^m)
+w_2(\beta_0\alpha_{02}+\beta_1\alpha_{12}a+\cdots
+\beta_m\alpha_{m2}a^m)+\cdots\qquad\qquad\qquad
$$
$$
\qquad\qquad\qquad\qquad\qquad\qquad\cdots
+w_n(\beta_0\alpha_{0n}+\beta_1\alpha_{1n}a+\cdots
+\beta_m\alpha_{mn}a^m)=0.
$$

We set $f_i(t):=\beta_0\alpha_{0i}+\beta_1\alpha_{1i}t+\cdots
+\beta_m\alpha_{mi}t^m$ for $i=1,\dots,n$. If all the $f_i$'s are
zero, then $\beta_i\alpha_{ij}=0$ for $0\leq i\leq m$ and $1\leq
j\leq n$. But each row $(\alpha_{i0},\dots,\alpha_{in})$ is not
the zero vector, because $\sum_j \alpha_{ij}w_ja^i\neq 0$. Hence
$\beta_i=0$ for all $i$, a contradiction.
\end{proof}
We can now prove the proposition.
\begin{proof}
Let $\{w_1,\dots ,w_n\}$ be a basis of $W$. By the lemma, for
$0\leq i\leq m$ we can find polynomials $f_{i1},\dots,f_{in}$, not
all zero and of degree $\leq m$ such that
\begin{equation} \label{eqzelmanov}
\tag{*} \sum_jw_jf_{ij}\left(a^{(m+1)^i}\right)=0.
\end{equation}

We have
$$
\det \left\| f_{ij}\left(a^{(m+1)^{i}}\right) \right\|=0.
$$

We got in this way a polynomial of degree bounded by a function of
$m$ and $n$ only, satisfied by $a$. If this is not the zero
polynomial, we are done.

Suppose this is not the case. Let
$f_{ij}(t):=\alpha_{ij0}+\alpha_{ij1}t+\cdots +\alpha_{ijm}t^m$,
and suppose that
$$
\det \left\| f_{ij}\left(t^{(m+1)^{i}}\right) \right\|=0.
$$

Observe that in each row of the matrix $\left\| f_{ij}\left(t^{(m+1)^{i}}\right) \right\|$ there are at least two nonzero polynomials. In fact
we know that they are not all zero. If only one of them is zero, then the equation (\ref{eqzelmanov}) gives a zero divisor, which doesn't exist
by our assumption. Moreover, we can assume that in each row the entries have no common divisors of the form $t^k$ with $k\geq 1$, since
otherwise we can factor it out, preserving the relation (\ref{eqzelmanov}). Hence in particular in each row there is at least one polynomial
with nonzero constant term.

Since these rows are linearly dependent, we can take a minimal linearly dependent set of rows, call $r$ the cardinality of this set and call the
indices of these rows $j_1,j_2,\dots ,j_r$. By construction all the minors of order $r$ in these rows are zero. Considering these minors modulo
$t^{(m+1)^{j_1+1}}$ we can replace the coefficients in the first of our rows by their constant terms, still having the first row non zero and
depending on the others. Hence we can find polynomials $b(t),c_2(t),\dots,c_{r}(t)$ such that
$$
b(t)\alpha_{j_1k0}=\sum_{i=2}^{r}c_i(t)f_{j_ik}\left(t^{(m+1)^{j_i}}\right)
$$
for all $k=1,\dots,n$. By assumption $b(t)\neq 0$. Observe now
that (\ref{eqzelmanov}) implies
\begin{eqnarray*}
b(a)\left( \sum_{k=1}^nw_k\alpha_{j_1k0}\right) & = & \sum_{k=1}^nw_kb(a)\alpha_{j_1k0}\\
 & = & \sum_{k=1}^nw_k \sum_{i=2}^{r}c_i(a)f_{j_ik}\left(a^{(m+1)^{j_i}}\right)\\
 & = &  \sum_{i=2}^{r}c_i(a)\left( \sum_{k=1}^nw_k f_{j_ik}\left(a^{(m+1)^{j_i}}\right)
 \right)=0.
\end{eqnarray*}
Since $\sum_{k=1}^nw_k\alpha_{j_1k0}\neq 0$, we must have
$b(a)=0$. It's now clear that $b(t)$ also has degree bounded by a
function of $m$ and $n$ only. This completes the proof.
\end{proof}

The following lemma is crucial.
\begin{Lem}
If $A$ is an (infinite dimensional) division algebra, then given two
finite dimensional subspaces $V$ and $W\subset A$ there exists a
nonzero element $a\in A$ such that $V\cap Wa=\{0\}$.
\end{Lem}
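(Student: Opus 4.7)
The plan is a proof by contradiction that feeds directly into the Zelmanov proposition just established. Suppose no such $a$ exists, i.e.\ $V \cap Wa \neq \{0\}$ for every nonzero $a \in A$. Since a division algebra is in particular a domain, the proposition applies and forces $A$ to be algebraic of bounded degree over $K$: there is a positive integer $n$ such that every element of $A$ satisfies a nonzero polynomial over $K$ of degree at most $n$.

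The remaining task is to contradict the standing hypothesis that $A$ is infinite dimensional over $K$. Since $K \subseteq Z(A)$, every element of $A$ is algebraic of degree $\leq n$ over the center $Z(A)$, and by the classical Kaplansky--Jacobson theorem for division rings algebraic of bounded degree over their center, this yields $[A : Z(A)] \leq n^2$. On the other hand, $Z(A)$ is a commutative algebraic extension of $K$ in which every element has degree $\leq n$ over $K$; in characteristic zero the primitive element theorem applied to any finitely generated subextension of $Z(A)/K$ shows that each such subextension has degree $\leq n$, hence $[Z(A) : K] \leq n$. Combining, $|A| \leq n^3 < \infty$, contradicting the hypothesis and completing the proof.

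The main obstacle is the invocation of the Kaplansky--Jacobson structure theorem for algebraic division rings of bounded degree. A self-contained replacement would require reproducing a nontrivial chunk of PI theory, so in the context of this paper it is cleanest simply to quote it; every other step (reducing to Zelmanov's proposition, and the characteristic-zero primitive element argument for $Z(A)/K$) is essentially formal.
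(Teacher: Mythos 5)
Your proof is correct, and the first half (negating the conclusion and invoking Zelmanov's proposition to get that $A$ is algebraic of bounded degree over $K$) is exactly what the paper does. Where you diverge is in how the contradiction with infinite-dimensionality is extracted. You pass through the structure theory of division rings: bounded algebraic degree over the center forces $[A:Z(A)]\leq n^2$ by the Kaplansky--Jacobson theorem, and the primitive element theorem in characteristic zero bounds $[Z(A):K]\leq n$, giving $|A|\leq n^3$. The paper instead quotes only the local finiteness theorem from the cited Jacobson reference (algebraic algebras of bounded degree are locally finite) and then exploits the specific shape of the hypothesis: for every nonzero $a$ there are nonzero $v\in V$, $w\in W$ with $v=wa$, so $a=w^{-1}v$; since $w^{-1}$ is a polynomial in $w$ (as $w$ is algebraic), every element of $A$ lies in the subalgebra generated by $V\cup W$, which is finitely generated and hence finite dimensional. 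Your route buys an explicit numerical bound on $\dim_K A$ and avoids the observation that $A$ is generated by $V\cup W$, at the cost of invoking the deeper central-simple-algebra form of the theorem rather than bare local finiteness; both versions are legitimately attributable to the Jacobson paper the author cites, so either argument fits the paper's framework.
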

\begin{proof}
Suppose the contrary. Then by the previous proposition we know
that $A$ is algebraic of bounded degree. Hence by a theorem of
Jacobson (see \cite{jacobson}) $A$ is locally finite, i.e. any
finitely generated subalgebra of $A$ is finite dimensional. But
for any nonzero $a\in A$ we have $v=wa$ for some nonzero $v\in V$
and some nonzero $w\in W$, i.e. $a=w^{-1}v$. Hence $a$ is
contained in the subalgebra generated by $V$ and $W$, which is
finite dimensional. This gives a contradiction, since $A$ is not
finite dimensional.
\end{proof}

We are now able to prove the main result of this subsection.
\begin{Thm} \label{domainsubadditive}
If $A$ is a nonamenable domain, then $I_*(A)$ is subadditive. If
$A$ is an amenable domain, then $I_*(A,V)$ is subadditive for any
subframe $V$ of $A$.
\end{Thm}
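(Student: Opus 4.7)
The proof splits naturally into two cases. For the nonamenable case, Corollary \ref{nonamenable} gives $I_*(n;A)\sim n$, and the function $n\mapsto n$ is trivially subadditive with $C_1=C_2=1$; Lemma \ref{subadditive} then transfers subadditivity to $I_*(A)$.

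For the amenable case, fix a subframe $V$ of $A$ and positive reals $r_1,\dots,r_k$. I will construct a subspace $W\subset A$ of dimension $r_1+\cdots+r_k$ with $|\partial_V(W)|\leq \sum_i I_*(r_i;A,V)$, which gives subadditivity with $C_1=C_2=1$. For a small $\varepsilon>0$, pick near-optimal $W_i\subset A$ with $|W_i|=r_i$ and $|\partial_V(W_i)|\leq I_*(r_i;A,V)+\varepsilon$, and combine them into
$$
W:=\bigoplus_{i=1}^k W_ia_i
$$
for elements $a_i\in A\setminus\{0\}$ chosen so that this sum really is direct. Since $A$ is a domain, right multiplication by a nonzero element is an injective $K$-linear map on finite dimensional subspaces, so $|W_ia_i|=r_i$ and $|VW_ia_i|=|VW_i|$. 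Combined with $VW=\sum_i VW_ia_i$ and the directness of the $W_ia_i$, this yields
$$
|\partial_V(W)|=|VW|-|W|\leq \sum_i|VW_ia_i|-\sum_i|W_ia_i|=\sum_i|\partial_V(W_i)|\leq \sum_i I_*(r_i;A,V)+k\varepsilon.
$$
Since $|W|=\sum_i r_i$, letting $\varepsilon\to 0$ gives $I_*(\sum_i r_i;A,V)\leq \sum_i I_*(r_i;A,V)$, as desired.

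The $a_i$ are produced inductively: set $a_1=1$, and given $a_1,\dots,a_{i-1}$ making $U_{i-1}:=W_1a_1\oplus\cdots\oplus W_{i-1}a_{i-1}$ direct, apply the preceding Lemma to $(U_{i-1},W_i)$ to obtain $a_i\in A\setminus\{0\}$ with $U_{i-1}\cap W_ia_i=\{0\}$. The mild subtlety is that the Lemma is literally stated for division algebras, whereas $A$ is only a domain. However, Proposition \ref{zelmanov} applies to any domain, and if no such $a_i$ existed then $A$ would be algebraic of bounded degree, hence by Jacobson's theorem locally finite; being a domain, every finitely generated subalgebra would then be a finite dimensional domain, hence a division algebra, making $A$ itself a division algebra, in which case the final contradiction in the proof of the Lemma applies verbatim since $A$ is infinite dimensional.

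The main obstacle, and the only place where the domain hypothesis is genuinely used, is this inductive choice of the $a_i$; the boundary estimate is essentially formal once directness is arranged, and in particular no passage to an Ore localization or quotient division ring is required.
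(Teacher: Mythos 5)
Your proof is correct, and in the amenable case it takes a genuinely different route from the paper's. The paper first uses the fact that an amenable domain is right Ore, passes to the quotient division algebra $D$ (Lemma \ref{lemmalocal} gives $I_*(n;A,V)=I_*(n;D,V)$), and only then applies the key lemma --- for finite dimensional subspaces $V,W$ of an infinite dimensional division algebra there is a nonzero $a$ with $V\cap Wa=\{0\}$ --- inside $D$, and only for two summands. You instead stay inside $A$ and upgrade that lemma to arbitrary infinite dimensional domains, which is legitimate exactly as you say: Proposition \ref{zelmanov} is already stated for domains, Jacobson's theorem gives local finiteness, and a locally finite domain is a directed union of finite dimensional domains, i.e.\ of division algebras, so $A$ is itself a division algebra and the contradiction in the paper's lemma applies verbatim. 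Your inductive choice of $a_1,\dots,a_k$ then handles $k$ summands at once (the paper's two-summand version also iterates, since its constants are $1$, so this is only presentational). What your route genuinely buys is that it never needs the assertion --- true, but stated without proof in the paper --- that an amenable domain is Ore, and it in fact proves the stronger statement that $I_*(A,V)$ is subadditive with constants $1$ for every subframe $V$ of every infinite dimensional domain, which subsumes both halves of the theorem. Two cosmetic points, shared with the paper: since dimensions are integers the infimum defining $I_*(n;A,V)$ is attained, so your $\varepsilon$ is unnecessary; and for real $r_i$ one should read $|W_i|=\lfloor r_i\rfloor$ and absorb the discrepancy between $\lfloor\sum_i r_i\rfloor$ and $\sum_i\lfloor r_i\rfloor$ into the constants $C_1,C_2$.
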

\begin{proof}
If $A$ is nonamenable, then by Corollary \ref{nonamenable}
$I_*(n;A)\sim n$, hence by Lemma \ref{subadditive} $I_*(A)$ is
subadditive.

If $A$ is amenable, then by Proposition \ref{amenable} we know
that $I_*(A,V)\precneqq n$ for any subframe $V$ of $A$. In this
case, we know that $A$ is a right Ore domain, hence it admits a
ring of quotients $D$, which is of course a division algebra. By
Lemma \ref{lemmalocal}, $I_*(n;A,V)=I_*(n;D,V)$, hence again by
Lemma \ref{subadditive} we reduced the problem to show that $D$
has a subadditive isoperimetric profile.

Let $r,s\in \mathbb{N}$, and consider two subspaces $W,Z\subset D$
with $|W|=r$ and $|Z|=s$. By the previous lemma, we can find an
element $a\in D$ such that $W\cap Za=\{0\}$. If now $V$ is any
subframe of $D$, we have
\begin{eqnarray*}
|\partial_V(W\oplus Za)| & = & |V(W\oplus Za)|-|W\oplus Za|\leq
|VW|+|VZa|-|W|-|Za|\\
 & = & |VW|+|VZ|-|W|-|Z|=
|\partial_V(W)|+|\partial_V(Z)|,
\end{eqnarray*}
which gives the subadditivity of $I_*(n;D,V)$.
\end{proof}
\begin{Ques}
Is the isoperimetric profile with respect to some subframe of an
algebra always subadditive?
\end{Ques}

\subsection{Free left modules over subalgebras}

We now study algebras which are a free left module over some subalgebra.

The proof of the following proposition is a modification of the
proof of Theorem 2.4 in \cite{zhang}.
\begin{Prop} \label{freeleftmod}
Suppose that $B\subset A$ is a subalgebra and $A$ is a free left
$B$-module. If $V$ is a subframe of $B$ and $I_*(B,V)$ is
subadditive, then $I_*(B,V)\preceq I_*(A,V)$.
\end{Prop}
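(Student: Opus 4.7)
My strategy is to slice a finite dimensional subspace $W\subset A$ into ``columns'' along a left $B$-basis of $A$, estimate the boundary of each column in $B$, and then glue the estimates together using the subadditivity of $I_*(B,V)$. The naive choice of columns (project $W$ onto each basis coordinate) does not work because the resulting projected subspaces overlap in an uncontrolled way; the right idea is to use a filtration so that the successive quotients assemble into a true sum, not a sum with defect.

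Fix a basis $\{e_\alpha\}$ of $A$ as a free left $B$-module, with $e_1=1$ so that $B=Be_1$. Given a subspace $W\subset A$, pick a finite subset $\{e_1,\dots,e_k\}$ of the basis with $W\subset \bigoplus_{i=1}^{k} Be_i$, and filter $W$ by
$$
W^{(i)}\ :=\ W\cap \bigoplus_{j=1}^{i}Be_j,\qquad 0=W^{(0)}\subset W^{(1)}\subset\cdots\subset W^{(k)}=W.
$$
Let $\pi_i:\bigoplus_{j=1}^{k}Be_j\to B$ be projection onto the $e_i$-coordinate and set $U_i:=\pi_i(W^{(i)})\subset B$. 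Since $\ker(\pi_i|_{W^{(i)}})=W^{(i-1)}$, we obtain an isomorphism $W^{(i)}/W^{(i-1)}\cong U_i$, and hence $|W|=\sum_{i=1}^{k}|U_i|$.

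The key observation is that, because $V\subset B$, the action of $V$ on $A$ is componentwise along the $Be_i$: for every $b\in B$, $V\cdot(be_i)=(Vb)e_i$. Consequently $\pi_i(VW^{(i)})=VU_i$ while $\pi_i(VW^{(i-1)})=0$, so $\pi_i$ descends to a surjection $VW^{(i)}/VW^{(i-1)}\twoheadrightarrow VU_i$, which gives $|VW^{(i)}/VW^{(i-1)}|\geq |VU_i|$. Summing the filtration and subtracting the equality for $|W|$ yields the central estimate
$$
|\partial_V(W;A)|\ =\ |VW|-|W|\ \geq\ \sum_{i=1}^{k}\bigl(|VU_i|-|U_i|\bigr)\ =\ \sum_{i=1}^{k}|\partial_V(U_i;B)|.
$$

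To finish, apply the isoperimetric inequality in $B$ to each $U_i$, giving $I_*(|U_i|;B,V)\leq |\partial_V(U_i;B)|$, and invoke subadditivity of $I_*(B,V)$ with constants $C_1,C_2>0$ on the $k$-tuple $(|U_1|,\dots,|U_k|)$:
$$
C_2\, I_*\!\bigl(C_1|W|;B,V\bigr)\ =\ C_2\, I_*\!\Bigl(C_1\sum_{i}|U_i|;B,V\Bigr)\ \leq\ \sum_{i}I_*(|U_i|;B,V)\ \leq\ |\partial_V(W;A)|.
$$
Taking the infimum over all $W\subset A$ of dimension $n$ gives $C_2\, I_*(C_1 n;B,V)\leq I_*(n;A,V)$, which is precisely $I_*(B,V)\preceq I_*(A,V)$. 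The main obstacle is the filtration step: without it one only gets $W\hookrightarrow \bigoplus_i \pi_i(W)$ with a nontrivial cokernel, and the natural inequality goes the wrong way; the filtration $W^{(i)}$ converts this into an equality on dimensions and produces the crucial inequality for the boundary.
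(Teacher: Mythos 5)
Your proof is correct and follows essentially the same route as the paper's: both filter $W$ by its intersections with partial sums $\bigoplus Be_j$ of the free basis, identify $|W|=\sum_i|U_i|$ with the ``leading coefficient'' subspaces $U_i\subset B$, establish $|VW|\geq\sum_i|VU_i|$ (the paper via an adapted echelon basis and induction, you via the induced surjections $VW^{(i)}/VW^{(i-1)}\twoheadrightarrow VU_i$), and conclude by subadditivity. The differences are purely presentational.
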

\begin{proof}
We have $A=\bigoplus_i Ba_i$ where $a_i\in A$. Given any subspace
$W$ of $A$ we can find $a_1,\dots,a_n$ such that $W\subset
\bigoplus_{i=1}^{n}Ba_i$. We can choose a basis of $W$ of the form
$$
\{w_{i}^{1}a_1+y_{i}^{1}\}_{i=1}^{p_1}\cup
\{w_{i}^{2}a_2+y_{i}^{2}\}_{i=1}^{p_2}\cup\cdots \cup
\{w_{i}^{n}a_n+y_{i}^{n}\}_{i=1}^{p_n}
$$
where $w_{i}^{j}\in B$ and $y_{i}^{j}\in \bigoplus_{k>j}Ba_k$,
such that for each $j$, $\{w_{i}^{j}\}_{i=1}^{p_j}$ are linearly
independent. Notice that $\{w_{i}^{j}a_j+y_{i}^{j}\}_{i=1}^{p_j}$
corresponds to a basis of $(W\cup\bigoplus_{k\geq j}Ba_k )/(W\cup
\bigoplus_{k>j}Ba_k)$. Let $W_j'$ denote the subspace generated by
$\{w_{i}^{j}\}_{i=1}^{p_j}$ and let $W_j$ denote the subspace
generated by $\{w_{i}^{j}a_j+y_{i}^{j}\}_{i=1}^{p_j}$. Then
$$
W=W_1\oplus W_2\oplus \cdots \oplus W_n
$$

and hence
$$
|W|=\sum_j|W_j| = \sum_j|W_j'|.
$$
Let $V$ be a subframe of $B$. We have
$$
VW_1=\left\{xa_1 + y\mid x\in VW_1'\text{ and } y\in
\bigoplus_{i=2}^{n}Ba_i \right\}.
$$
Since
$$
\sum_{i=2}^{n}VW_i\subset
\bigoplus_{i=2}^{n}Ba_i\quad\text{and}\quad
\left(\bigoplus_{i=2}^{n}Ba_i\right)\cap Ba_1 =0,
$$
we have
$$
\left|\sum_{i=1}^{n}VW_i \right|\geq
|VW_1'|+\left|\sum_{i=2}^{n}VW_i \right|.
$$
By induction on $n$ we have
$$
\left|\sum_{i=1}^{n}VW_i \right|\geq \sum_{i=1}^{n}|VW_i'|.
$$

Using the hypothesis, this implies
\begin{eqnarray*}
|\partial_V(W)| & = & |VW|-|W|=
\left|\sum_{i=1}^{n}VW_i \right|-\sum_{i=1}^{n}|W_i|\\
 & \geq &
 \sum_{i=1}^{n}|VW_i'|-\sum_{i=1}^{n}|W_i'|=\sum_{i=1}^{n}|\partial_V(W_i')|\\
 & \geq & \sum_{i=1}^{n}I_*(|W_i'|;B,V)\geq
 C_2I_*(C_1\sum_{i=1}^{n}|W_i'|;B,V)=C_2I_*(C_1|W|;B,V),
\end{eqnarray*}
where $C_1$ and $C_2$ are two positive constants. Therefore
$$
I_*(B,V)\preceq I_*(A,V).
$$
\end{proof}

The following corollaries are immediate consequences of the
proposition.
\begin{Cor}
Suppose that $B\subset A$ is a subalgebra and $A$ is a free left
$B$-module. If both $A$ and $B$ have isoperimetric profiles, and
$I_*(B)$ is subadditive, then $I_*(B)\preceq I_*(A)$.
\end{Cor}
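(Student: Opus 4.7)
The plan is to reduce the corollary directly to Proposition \ref{freeleftmod} by making an appropriate choice of subframe, so the work is essentially bookkeeping with the definitions.

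First I would pick a subframe $V$ of $B$ that measures $I_*(B)$, which exists by hypothesis since $B$ has an isoperimetric profile. Note that $V$ is automatically a subframe of $A$ as well: a subframe is only required to be a finite dimensional subspace containing the identity, so the inclusion $B \subset A$ immediately makes $V$ a subframe of $A$ (there is no generation requirement to verify, which is what makes this step painless).

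Next I would transfer the subadditivity hypothesis to $V$: since $V$ measures $I_*(B)$, we have $I_*(B,V) \sim I_*(B)$, and by Lemma \ref{subadditive} subadditivity is preserved under $\sim$, so $I_*(B,V)$ is itself subadditive. Proposition \ref{freeleftmod} then applies directly to give
$$
I_*(B,V) \preceq I_*(A,V).
$$
Combining this with the fact that $V$ measures $I_*(B)$ and that $A$ has an isoperimetric profile (so $I_*(A,V) \preceq I_*(A)$ for any subframe $V$ of $A$, by definition), we obtain the chain
$$
I_*(B) \sim I_*(B,V) \preceq I_*(A,V) \preceq I_*(A),
$$
which is the desired conclusion.

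There is no serious obstacle here; the only point requiring a moment of care is making sure the hypothesis on subadditivity passes from the asymptotic invariant $I_*(B)$ to the concrete function $I_*(B,V)$ needed by Proposition \ref{freeleftmod}, and this is exactly what Lemma \ref{subadditive} was set up to handle. The essence of the argument is that ``having an isoperimetric profile'' is precisely the property that lets us replace comparisons for a fixed subframe by comparisons of the intrinsic invariants.
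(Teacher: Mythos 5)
Your proof is correct and is precisely the argument the paper intends when it calls this corollary an immediate consequence of Proposition \ref{freeleftmod}: choose a subframe $V$ of $B$ measuring $I_*(B)$, transfer subadditivity to $I_*(B,V)$ via Lemma \ref{subadditive}, apply the proposition, and close the chain using the definition of having an isoperimetric profile. No issues.
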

\begin{Cor}
Suppose that $B\subset A$ is a subalgebra and $A$ is a free left
$B$-module. If $A$ is amenable and $I_*(B)$ is subadditive, then $B$
is amenable.
\end{Cor}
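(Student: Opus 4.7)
The plan is to deduce this directly from the preceding Proposition \ref{freeleftmod} together with the characterization of amenability in Proposition \ref{amenable}. Concretely, to show that $B$ is amenable, I need to verify that $I_{*}(n;B,V)\precneqq n$ for every subframe $V$ of $B$.

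First I would fix an arbitrary subframe $V$ of $B$. Since $V$ is a finite-dimensional subspace of $B\subset A$ containing the identity of $A$, it is automatically also a subframe of $A$ (the definition of subframe does not require generation). The subadditivity assumption on $I_{*}(B)$ transfers, via Lemma \ref{subadditive}, to subadditivity of $I_{*}(B,V)$, so Proposition \ref{freeleftmod} applies and yields
$$
I_{*}(n;B,V)\preceq I_{*}(n;A,V).
$$

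Next, since $A$ is amenable, Proposition \ref{amenable} gives $I_{*}(n;A,V)\precneqq n$. Combining the two displayed comparisons,
$$
I_{*}(n;B,V)\preceq I_{*}(n;A,V)\precneqq n.
$$
Because $V$ was an arbitrary subframe of $B$, this means $I_{*}(n;B,V)\precneqq n$ for every subframe $V$ of $B$, and invoking Proposition \ref{amenable} one more time (in the other direction) shows that $B$ is amenable.

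There is essentially no genuine obstacle here beyond the bookkeeping of the hypotheses. The only point worth checking carefully is that the subadditivity assumption on $I_{*}(B)$ really suffices to supply subadditivity of $I_{*}(B,V)$ for the specific subframe $V$ being used to test amenability; this is exactly what Lemma \ref{subadditive} is designed for, since the different choices of subframe give asymptotically equivalent profiles (Proposition \ref{geometricindependence}), and subadditivity is invariant under asymptotic equivalence.
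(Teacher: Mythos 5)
Your overall strategy (Proposition \ref{freeleftmod} plus the characterization of amenability in Proposition \ref{amenable}) is exactly what the paper intends by calling this an ``immediate consequence,'' but there is a genuine flaw in the step where you feed an \emph{arbitrary} subframe $V$ of $B$ into Proposition \ref{freeleftmod}. That proposition requires $I_*(B,V)$ itself to be subadditive, and you justify this by saying that all subframes give asymptotically equivalent profiles, citing Proposition \ref{geometricindependence}. But that proposition is about \emph{frames} (generating subframes), not subframes: for a non-generating subframe the paper only ever asserts the one-sided inequality $I_*(n;B,V)\leq I_*(n;B,W)$ when $V\subseteq W$, and the inequality can be strict asymptotically. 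For instance, in $K[x,y]$ the subframe $K+Kx$ gives a bounded profile (take $W$ spanned by $\{x^iy^j:\ i<n,\ j<m\}$ with $m$ fixed and $n\to\infty$), while $I_*(K[x,y])\sim n^{1/2}$; so subadditivity of $I_*(B)$ does not transfer via Lemma \ref{subadditive} to $I_*(B,V)$ for every subframe $V$, and Proposition \ref{freeleftmod} cannot be invoked for such a $V$.

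The repair is short and worth making explicit. Since the hypothesis ``$I_*(B)$ is subadditive'' presupposes that $B$ has an isoperimetric profile, fix a subframe $V_0$ of $B$ that measures it; then $I_*(B,V_0)\sim I_*(B)$ is subadditive by Lemma \ref{subadditive}, and Proposition \ref{freeleftmod} applied to $V_0$ gives $I_*(B,V_0)\preceq I_*(A,V_0)\precneqq n$, the last step because $A$ is amenable and $V_0$ is also a subframe of $A$. Now for an \emph{arbitrary} subframe $V$ of $B$ you do not need Proposition \ref{freeleftmod} at all: by the definition of ``measures the profile'' you have $I_*(B,V)\preceq I_*(B,V_0)\precneqq n$, and Proposition \ref{amenable} then yields amenability of $B$. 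With this rerouting your argument is correct and coincides with the paper's intended deduction.
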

Let's derive another easy consequence from the previous proposition,
which generalizes a result in \cite{elek2}.
\begin{Prop} \label{subdivisionalg}
If $B$ is a nonamenable division subalgebra of $A$, then $A$ is
nonamenable. If $B$ is an amenable division subalgebra of $A$,
then $I_*(B,V)\preceq I_*(A,V)$ for any subframe $V$ of $B$. In
particular, if both $A$ and $B$ have isoperimetric profiles, then
$I_*(B)\preceq I_*(A)$.
\end{Prop}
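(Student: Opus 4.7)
The plan is to reduce both assertions to Proposition \ref{freeleftmod}. The crucial observation is that if $B$ is a division algebra and $B\subset A$ is a subalgebra, then $A$ is automatically a free left $B$-module (every module over a division algebra is free, obtained by choosing a basis). This gives us the hypothesis needed to invoke Proposition \ref{freeleftmod}, provided we can verify subadditivity of $I_*(B,V)$.

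First I would treat the nonamenable case. If $B$ is a nonamenable division algebra, then $B$ is in particular a nonamenable domain, so by Theorem \ref{domainsubadditive} (or directly by Corollary \ref{nonamenable} and Lemma \ref{subadditive}, since then $I_*(B)\sim n$, which is trivially subadditive), $I_*(B,V)$ is subadditive for any frame $V$ of $B$. Combined with freeness, Proposition \ref{freeleftmod} yields $I_*(B,V)\preceq I_*(A,V)$. Since $I_*(B,V)\sim n$ and $V$ is a subframe of $A$, this forces $I_*(A,V)\sim n$, so $A$ cannot satisfy the F\o{}lner condition with respect to $V$, and by Proposition \ref{amenable} (applied in the contrapositive) $A$ is nonamenable.

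For the amenable case, let $V$ be any subframe of $B$. Since $B$ is a division algebra it is in particular a domain, so by Theorem \ref{domainsubadditive} $I_*(B,V)$ is subadditive. Freeness of $A$ as a left $B$-module and Proposition \ref{freeleftmod} then give $I_*(B,V)\preceq I_*(A,V)$ directly.

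The final assertion is an immediate corollary: if both $A$ and $B$ admit isoperimetric profiles, pick a subframe $V$ of $B$ measuring $I_*(B)$; then $V$ is also a subframe of $A$, so
\[
I_*(B)=I_*(B,V)\preceq I_*(A,V)\preceq I_*(A),
\]
where the middle inequality is what we just proved and the last follows from the maximality defining $I_*(A)$. No step presents a real obstacle here: the only subtlety is making sure we are entitled to invoke subadditivity, which is precisely what Theorem \ref{domainsubadditive} supplies for division algebras (amenable or not), since division algebras are domains.
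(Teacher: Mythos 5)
Your proposal is correct and follows essentially the same route as the paper: note that $A$ is a free left module over the division algebra $B$, obtain subadditivity of $I_*(B,V)$ from Theorem \ref{domainsubadditive} (trivially in the nonamenable case since $I_*(B)\sim n$), and apply Proposition \ref{freeleftmod}, concluding nonamenability of $A$ from $I_*(n;A,V)\sim n$. The only difference is that you spell out the final deduction for algebras with isoperimetric profiles, which the paper leaves implicit.
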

\begin{proof}
If $B$ is a nonamenable division subalgebra, then $A$ is a free
left $B$-module. By Theorem \ref{domainsubadditive}, $I_*(B,V)$ is
subadditive for any subframe $V$ that measures $I_*(n;B)\sim n$,
hence by Proposition \ref{freeleftmod}
$$
n\sim I_*(n;B,V)\preceq I_*(n;A,V)
$$
for any subframe $V$ of $B$ that measures $I_*(B)$. Hence
$I_*(n;A,V)\sim n$, and so $A$ is nonamenable by Corollary
\ref{nonamenable}.

If $B$ is an amenable division subalgebra, $A$ is again a free
left $B$-module. By Theorem \ref{domainsubadditive}, $I_*(B,V)$ is
subadditive for any subframe $V$, hence the result follows again
from Proposition \ref{freeleftmod}.
\end{proof}

We are now able to prove the following

\begin{Thm} \label{subOredomains}
Let $B\subset A$ be domains. If both $B$ and $A$ are right Ore,
then $I_*(B,V)\preceq I_*(A,V)$ for all subframes $V$ of $B$.
\end{Thm}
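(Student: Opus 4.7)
The plan is to reduce the statement to the division algebra case already handled by Proposition \ref{subdivisionalg}. Since $B$ and $A$ are right Ore domains, $\Omega_B := B \setminus \{0\}$ and $\Omega_A := A \setminus \{0\}$ are right Ore sets of regular elements in $B$ and $A$ respectively, so the right rings of fractions $B\Omega_B^{-1}$ and $A\Omega_A^{-1}$ exist and are (infinite dimensional) division algebras.

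The first key step is to exhibit a canonical embedding $B\Omega_B^{-1} \hookrightarrow A\Omega_A^{-1}$. This should follow from the universal property of right Ore localization applied to the composition $B \hookrightarrow A \hookrightarrow A\Omega_A^{-1}$: every element of $\Omega_B$ is nonzero in the domain $A$, hence regular in $A$, hence invertible in $A\Omega_A^{-1}$. Thus the composition factors through $B\Omega_B^{-1}$, and I would realize $B\Omega_B^{-1}$ as a division subalgebra of $A\Omega_A^{-1}$.

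Granted this embedding, let $V$ be a subframe of $B$. Then $V$ is simultaneously a subframe of $A$, of $B\Omega_B^{-1}$, and of $A\Omega_A^{-1}$. Proposition \ref{subdivisionalg} applied to the inclusion of division algebras $B\Omega_B^{-1} \subset A\Omega_A^{-1}$ yields
$$
I_*(B\Omega_B^{-1},V) \preceq I_*(A\Omega_A^{-1},V).
$$
On the other hand, Lemma \ref{lemmalocal}(i) applied to the pairs $(B,\Omega_B)$ and $(A,\Omega_A)$ gives the equalities $I_*(n;B,V)=I_*(n;B\Omega_B^{-1},V)$ and $I_*(n;A,V)=I_*(n;A\Omega_A^{-1},V)$. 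Chaining these three relations produces the desired $I_*(B,V)\preceq I_*(A,V)$.

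The only delicate point is the construction and verification of the embedding $B\Omega_B^{-1} \hookrightarrow A\Omega_A^{-1}$, which is exactly where both right Ore hypotheses are genuinely used; after that, the argument is a formal combination of the localization invariance from Lemma \ref{lemmalocal} with the division-subalgebra comparison from Proposition \ref{subdivisionalg}, both already established in the paper.
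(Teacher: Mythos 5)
Your proposal is correct and follows essentially the same route as the paper: pass to the right quotient division algebras $S=B\Omega_B^{-1}\subset D=A\Omega_A^{-1}$, use Lemma \ref{lemmalocal}(i) to identify $I_*(n;B,V)=I_*(n;S,V)$ and $I_*(n;A,V)=I_*(n;D,V)$, and then apply Proposition \ref{subdivisionalg} to the inclusion of division algebras. The only difference is that you make explicit the (standard, universal-property) verification that $S$ embeds in $D$, which the paper takes for granted.
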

\begin{proof}
If we call $S$ and $D$ the right quotient division algebras of $B$
and $A$ respectively, by Lemma \ref{lemmalocal}, if $V$ is a
subframe of $B$ we have $I_*(n;B,V)= I_*(n;S,V)$ and $I_*(n;A,V)=
I_*(n;D,V)$. Since $I_*(S,V)$ is also subadditive, we can apply
Proposition \ref{subdivisionalg} to $S\subset D$ to get
$I_*(S,V)\preceq I_*(D,V)$. Now again by Lemma \ref{lemmalocal},
$I_*(B,V)\preceq I_*(A,V)$.
\end{proof}

The following corollary is a more general form of Theorem
\ref{theorem3}.

\begin{Cor}
If $B\subset A$ are domains, $B$ is right Ore and both $A$ and $B$
have isoperimetric profiles, then $I_*(B)\preceq I_*(A)$.
\end{Cor}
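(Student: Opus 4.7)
\medskip
\noindent\textbf{Proof plan for the Corollary.}

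The corollary looks like a one-line consequence of Theorem \ref{subOredomains}, but there is a subtlety: the theorem requires both $A$ \emph{and} $B$ to be right Ore, whereas the corollary only assumes this for $B$. My plan is therefore to split into two cases according to whether $A$ is amenable, since amenability will give us the missing Ore property on $A$ for free.

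Fix a subframe $V$ of $B$ that measures $I_*(B)$, so that $I_*(n;B,V)\sim I_*(B)$. Since $B\subset A$ and $V$ is a finite dimensional subspace containing $1$, the same $V$ is also a subframe of $A$ (though in general not a frame of $A$). The key idea is that, whichever case we are in, we will bound $I_*(B)\sim I_*(n;B,V)$ from above by $I_*(A)$.

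\emph{Case 1: $A$ is amenable.} Here I will invoke the fact, recalled in the proof of Theorem \ref{domainsubadditive}, that an amenable domain is automatically right Ore. Together with the assumption that $B$ is a right Ore domain and $B\subset A$, all the hypotheses of Theorem \ref{subOredomains} are met, so
$$
I_*(n;B,V)\;\preceq\; I_*(n;A,V).
$$
Now since $A$ has an isoperimetric profile and $V$ is a subframe of $A$, we have $I_*(n;A,V)\preceq I_*(A)$ directly from the definition. Chaining these gives $I_*(B)\sim I_*(n;B,V)\preceq I_*(A)$.

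\emph{Case 2: $A$ is nonamenable.} By Corollary \ref{nonamenable}, $I_*(A)\sim n$. On the other hand, for any subframe $U$ of any algebra and any subspace $W$ of dimension $n$, $|\partial_U(W)|=|UW/W|\leq |UW|\leq |U|\cdot n$, so the isoperimetric profile is always asymptotically at most linear. In particular $I_*(B)\preceq n\sim I_*(A)$, which is what we needed.

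\emph{Main obstacle.} The substantive work has already been done in Theorem \ref{subOredomains} (and, behind it, in Proposition \ref{freeleftmod} and Theorem \ref{domainsubadditive}); the only thing that needs care here is noticing that the hypothesis ``$A$ is right Ore'' in Theorem \ref{subOredomains} is essentially free once one knows $A$ is amenable, and that the complementary nonamenable case is dispatched by the trivial linear upper bound on any isoperimetric profile. There is nothing else to check.
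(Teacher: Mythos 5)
Your proposal is correct and follows essentially the same route as the paper: the paper's proof also splits on whether $A$ is nonamenable (in which case $I_*(A)\sim n$ dominates any profile) or amenable (in which case the amenable domain $A$ is right Ore and Theorem \ref{subOredomains} applies). You simply spell out the details — the linear upper bound on any profile and the passage from $I_*(\cdot;A,V)$ to $I_*(A)$ — that the paper leaves implicit.
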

\begin{proof}
If $A$ is nonamenable, $I_*(n;A)\sim n$ and there is nothing to
prove. Otherwise, the result follows from the previous theorem.
\end{proof}
\begin{Rem}
Notice that the hypothesis on $B$ of being right Ore cannot be
dropped. For example we already observed in the Introduction that
the quotient division algebra of the Weyl algebra $A_1$ is amenable,
but it contains a subalgebra isomorphic to a free algebra in two
variables (see \cite{makar}). We show now that this is the only case
that can occur.
\end{Rem}

By a theorem of Jategaonkar (\cite{jategaonkar}), a domain which is
not Ore must contain a subalgebra isomorphic to a noncommutative
free algebra. This and the previous proposition imply the following
corollary which is a more general form of Theorem \ref{theorem4}.
\begin{Cor}
If $A$ is an amenable domain, then for any subdomain $B$ of $A$ we
have $I_*(B,V)\preceq I_*(A,V)$ for all subframes $V$ of $B$ if
and only if $A$ does not contain a subalgebra isomorphic to a
noncommutative free algebra.
\end{Cor}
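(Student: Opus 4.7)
The plan is to prove both implications by combining Jategaonkar's theorem with the results already established in the excerpt, particularly Theorem \ref{subOredomains} and Proposition \ref{amenable}.

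For the ``if'' direction, I would assume that $A$ is amenable and contains no subalgebra isomorphic to a noncommutative free algebra, and let $B$ be an arbitrary subdomain of $A$. Since every subalgebra of $B$ is also a subalgebra of $A$, the hypothesis on $A$ ensures that $B$ itself contains no noncommutative free subalgebra. By Jategaonkar's theorem (\cite{jategaonkar}), a domain which is not right Ore must contain a noncommutative free subalgebra; the contrapositive shows that both $A$ and $B$ are right Ore domains. Theorem \ref{subOredomains} then applies directly to give $I_*(B,V) \preceq I_*(A,V)$ for all subframes $V$ of $B$, as desired.

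For the ``only if'' direction, I would argue by contrapositive: suppose $A$ contains a subalgebra $B$ isomorphic to a noncommutative free algebra of rank at least $2$. Then $B$ is itself a subdomain of $A$, and I need to exhibit a subframe $V$ of $B$ for which $I_*(B,V) \not\preceq I_*(A,V)$. Take $V$ to be a frame of $B$ (for instance, the span of $1$ together with the free generators). Since free algebras of rank $\geq 2$ are nonamenable (cf.\ Remark \ref{remdirectsum} and \cite{tullio}), Corollary \ref{nonamenable} yields $I_*(n;B,V) \sim n$. On the other hand, $V$ is a subframe of $A$ (not necessarily a frame), and since $A$ is amenable, Proposition \ref{amenable} gives $I_*(n;A,V) \precneqq n$. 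Therefore $I_*(B,V) \not\preceq I_*(A,V)$, completing the proof of the contrapositive.

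The proof is essentially an assembly of previously established machinery, so there is no single hard step; the only subtlety is the asymmetric role of subframes in the ``only if'' direction, where one must remember that a frame of the subalgebra $B$ need only be a subframe (not a frame) of the ambient algebra $A$, which is precisely what makes the amenability of $A$ force $I_*(A,V)$ strictly below the linear growth of $I_*(B,V)$.
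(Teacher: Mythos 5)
Your proof is correct and follows essentially the same route the paper intends: the paper derives this corollary exactly from Jategaonkar's theorem (no free subalgebra implies right Ore, for both $A$ and its subdomains) combined with Theorem \ref{subOredomains} for the ``if'' direction, and the nonamenability of the free algebra against the amenability of $A$ for the ``only if'' direction. Your fleshing out of the contrapositive step, including the observation that a frame of $B$ is merely a subframe of $A$ so that Proposition \ref{amenable} forces $I_*(A,V)\precneqq n$ while $I_*(B,V)\sim n$, is exactly the intended argument.
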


\subsection{Finite modules over subalgebras}

Suppose that $B$ is a subalgebra of an algebra $A$. Assume that
$A$ is a finite right $B$-module, i.e. $A=WB$, where $W$ is a
subframe of $A$. We want to compare the isoperimetric profiles of
$A$ and $B$.

The following proposition generalizes some of the results in
\cite{elek2}.

\begin{Prop} \label{bigcorollary}
Let $A$ be an algebra.
\begin{itemize}
    \item[(1)] Let $B$ be a subalgebra of $A$ such that $A$ is a
    finite free right $B$-module. If $B$ is amenable, then $A$ is also
    amenable. If both $A$ and $B$ have isoperimetric profiles, then
    $I_*(A)\preceq I_*(B)$. If moreover $B$ has a subadditive isoperimetric
    profile and $A$ is also a free left $B$-module, then $I_*(A)\sim
    I_*(B)$.
    \item[(2)] Let $B$ be a division subalgebra of $A$ and let $A$ be a
    finite right $B$-module. If $B$ is amenable, then $A$ is also amenable.
    If both $A$ and $B$ have isoperimetric profiles,
    then $I_*(A)\sim I_*(B)$.
    \item[(3)] Let $B$ be a finite dimensional algebra and $A$ an
    algebra. If $A$ is amenable, then $A\otimes B$ is also amenable. If both $A$ and $A\otimes B$
    have isoperimetric profiles, then $I_*(A\otimes B)\preceq I_*(A)$. If moreover $A$ has
    a subadditive isoperimetric profile, then $I_*(A)\sim I_*(A\otimes B)$.
    \item[(4)] Let $M_n(A)$ be the algebra of $n\times n$ matrices
    over $A$. If $A$ is amenable, then $M_n(A)$ is also amenable.
    If both $A$ and $M_n(A)$ have isoperimetric profiles, then $I_*(M_n(A))\preceq I_*(A)$.
    If moreover $A$ has a subadditive isoperimetric profile, then $I_*(A)\sim I_*(M_n(A))$.
    \item[(5)] Let $G$ be a finite group and $A\ast G$ a skew
    group ring. If $A$ is amenable, then also $A\ast G$ is amenable.
    If both $A$ and $A\ast G$ have isoperimetric profiles, then $I_*(A\ast G)\preceq
    I_*(A)$. If moreover $A$ has subadditive isoperimetric profile, then $I_*(A)\sim I_*(A\ast G)$.
\end{itemize}
\end{Prop}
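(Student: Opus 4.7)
The plan is to prove part~(1) directly, from which parts (2)--(5) follow by identifying the relevant two-sided free module structure.

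For part~(1), fix a right $B$-module basis $x_1=1, x_2, \ldots, x_m$ of $A$ and set $W := Kx_1 + \cdots + Kx_m$, a subframe of $A$ of dimension $m$. Freeness gives $|WZ| = m|Z|$ for every subspace $Z \subseteq B$. Starting from any frame of $A$ and enlarging by $W$, I would take a frame $V$ of $A$ with $W \subseteq V$. Since $VW$ is a finite-dimensional subspace of $A = \bigoplus_{i=1}^m x_i B$, one can find a subframe $V' \subseteq B$ with $VW \subseteq WV'$. Then for any $Z \subseteq B$,
$$|\partial_V(WZ)| = |VWZ| - |WZ| \leq |WV'Z| - |WZ| = m\bigl(|V'Z| - |Z|\bigr) = m\,|\partial_{V'}(Z)|.$$
Taking $Z$ nearly optimal for $I_*(\,\cdot\,;B,V')$ gives $I_*(mn;A,V) \leq m\,I_*(n;B,V')$. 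Since $V'$ is a subframe of $B$ one has $I_*(\,\cdot\,;B,V') \preceq I_*(B)$, and absorbing the factor $m$ into the asymptotic constants yields $I_*(A) \preceq I_*(B)$. The same estimate at the F\o lner level (if $|V'Z|/|Z| \leq 1+\epsilon$ then $|V(WZ)|/|WZ| \leq 1+\epsilon$) proves the amenability implication without requiring the profiles to exist. For the converse inequality under the extra hypotheses, I would take a frame $V$ of $B$, observe that $I_*(B,V)$ is subadditive by Lemma~\ref{subadditive}, and apply Proposition~\ref{freeleftmod} to the free left $B$-module $A$, obtaining $I_*(B) \sim I_*(B,V) \preceq I_*(A,V) \preceq I_*(A)$, where the last step uses that any subframe profile of $A$ is $\preceq I_*(A)$.

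Parts (2)--(5) are each immediate applications of part~(1) once the appropriate free module is recognized. For (2), every module over a division algebra is free, so $A$ is both a finite free right $B$-module and a free left $B$-module; moreover $B$ is a domain, so $I_*(B)$ is subadditive by Theorem~\ref{domainsubadditive}, and both halves of (1) combine to give $\sim$. For (3), the inclusion $a \mapsto a \otimes 1$ makes $A \otimes B$ a finite free left and right module over $A$ with basis $\{1 \otimes b_i\}$ for any $K$-basis $\{b_i\}$ of $B$. For (4), the scalar embedding $A \hookrightarrow M_n(A)$ realizes $M_n(A)$ as a finite free left and right module over $A$ with basis the matrix units $\{E_{ij}\}$. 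For (5), $A*G = \bigoplus_{g\in G} Ag = \bigoplus_{g\in G} gA$ is finite free on both sides over $A$ with basis $G$. In each case part~(1), with the appropriate labeling of the algebras, yields the stated inequalities and the $\sim$ variant under the subadditivity hypothesis on $I_*(A)$.

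The principal technical point will be the construction of the auxiliary subframe $V' \subseteq B$ satisfying $VW \subseteq WV'$, and the verification that the multiplicative factor $m = |W|$ arising from the rank of the free basis is absorbed into the asymptotic constants of $\preceq$ and $\sim$. Once this bookkeeping is in place, the remaining work consists of the purely module-theoretic identifications of free bases in parts (2)--(5).
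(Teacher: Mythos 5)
Your proposal is correct and follows essentially the same route as the paper: the core estimate $I_*(|W||Z|;A,V)\leq |W|\,|\partial_{V'}(Z)|$, obtained from a subframe $V'\subset B$ with $VW\subseteq WV'$ together with $|WZ|=|W||Z|$ from right-freeness, is exactly the paper's Lemma \ref{finiterightmod}(ii), and the reverse inequality via Proposition \ref{freeleftmod} under subadditivity, as well as the identifications of the two-sided free bases in (2)--(5), match the paper's argument. The only cosmetic difference is that you exploit freeness to get the equality $|WV'Z|=|W||V'Z|$ directly, where the paper routes the same bound through the auxiliary inequality $|ZVW/ZW|\leq |Z||VW/W|$ (which it also needs for the non-free case of the lemma).
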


First we need a lemma.

\begin{Lem}
Let $V,W,Z\subset A$ be subspaces of $A$. Then
$$
\left| \frac{ZVW}{ZW} \right|\leq |Z| \left| \frac{VW}{W} \right|
$$
\end{Lem}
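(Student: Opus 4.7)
The plan is to exhibit a surjective linear map
$$\psi\colon Z\otimes \bigl(VW/(VW\cap W)\bigr)\longrightarrow ZVW/(ZVW\cap ZW)$$
and then bound dimensions using $\dim(Z\otimes U)=|Z|\cdot |U|$. Recall the notational convention of the paper: $VW/W$ denotes $VW/(VW\cap W)$, so $|VW/W|=\dim VW-\dim(VW\cap W)$, and similarly for $ZVW/ZW$.

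First I would define $\psi$ by the rule $z\otimes \overline{v}\mapsto \overline{zv}$, where $\overline{v}$ is the class of $v\in VW$ in $VW/(VW\cap W)$ and $\overline{zv}$ is the class of $zv\in ZVW$ in $ZVW/(ZVW\cap ZW)$. The key check is well-definedness: if $v\in VW\cap W$, then for every $z\in Z$ we have $zv\in ZVW$ (because $v\in VW$) and $zv\in ZW$ (because $v\in W$), hence $zv\in ZVW\cap ZW$, so $\overline{zv}=0$. Bilinearity is immediate, so $\psi$ factors through the tensor product.

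Surjectivity is clear: every element of $ZVW$ is a finite sum of products $zv$ with $z\in Z$ and $v\in VW$, and $\psi(z\otimes\overline{v})=\overline{zv}$, so the image of $\psi$ contains a spanning set of $ZVW/(ZVW\cap ZW)$. Hence
$$\bigl|ZVW/ZW\bigr|=\dim\bigl(ZVW/(ZVW\cap ZW)\bigr)\leq \dim\bigl(Z\otimes (VW/W)\bigr)=|Z|\cdot |VW/W|,$$
which is the desired inequality.

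There is no real obstacle; the only thing to be careful about is that $ZW$ is not a priori contained in $ZVW$ (since $V$ is not assumed to contain $1$ in this lemma), so one must work with $ZVW\cap ZW$ rather than $ZW$ itself. The well-definedness argument handles this automatically, since $VW\cap W\subseteq VW$ guarantees $Z(VW\cap W)\subseteq ZVW\cap ZW$.
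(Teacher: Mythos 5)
Your proof is correct and rests on the same observation as the paper's: $Z(VW\cap W)\subseteq ZVW\cap ZW$, so the quotient $ZVW/ZW$ is spanned by at most $|Z|\cdot|VW/W|$ products. The paper phrases this as a count of spanning products $v_iw_j$ and their left $Z$-multiples, while you package it as a surjection from $Z\otimes(VW/W)$; yours is a cleaner and more careful formalization of the same idea.
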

\begin{proof}
Let $v_1,\dots,v_m$ be a basis of $V$ and $w_1,\dots,w_n$ a basis
of $W$. Then the products $v_iw_j$ span $VW$. Clearly at most
$|VW/W|=|\partial_V(W)|$ of these products are not in $W$. For
each of them, multiplying on the left by elements of $Z$, we get
at most $|Z|$ products which do not fall into $ZW$. This proves
the result.
\end{proof}
The previous proposition follows from the following lemma together
with Propositions \ref{freeleftmod}, \ref{finiterightmod} and
\ref{subadditive}.

\begin{Lem} \label{finiterightmod}
Let $B$ be a subalgebra of an algebra $A$, let $V$ be a subframe
of $A$ and let $A$ be a finite right $B$-module. \begin{itemize}
    \item [(i)] If there exist positive constants $C_1$ and $C_2$
    such that $$C_1I_*(C_2n;A,V)\leq I_*(n+r;A,V)$$ for any $n,r\in
    \mathbb{N}$, then $I_*(A,V)\preceq I_*(B,V_1)$ for some subframe $V_1$ of $B$.
    \item [(ii)] If $A$ is also free as right $B$-module, then $I_*(A,V)\preceq I_*(B,V_1)$
    for some subframe $V_1$ of $B$.
\end{itemize}
\end{Lem}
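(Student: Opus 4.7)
The plan is to transport isoperimetric data from $B$ up to $A$ by coupling any subspace $Z\subset B$ with the generating module $W$: work with $WZ\subset A$ and bound $|\partial_V(WZ)|$ in terms of $|\partial_{V_1}(Z)|$ for a suitable subframe $V_1$ of $B$. First I would construct $V_1$: since $VW$ is finite dimensional and $VW\subset A=WB$, there exists a finite dimensional subspace $B'\subset B$ with $VW\subset WB'$; setting $V_1:=K\cdot 1+B'$ produces a subframe of $B$ satisfying $VW\subset WV_1$.

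Next comes the key estimate. For any subspace $Z\subset B$, $V(WZ)\subset(VW)Z\subset WV_1Z$, and since $1$ belongs to both $V$ and $V_1$ every term in this chain contains $WZ$, so $V(WZ)/WZ\hookrightarrow WV_1Z/WZ$. Applying the preceding lemma in the form $|WV_1Z/WZ|\leq |W|\cdot|V_1Z/Z|$ yields the key bound
\[
|\partial_V(WZ)|\leq |W|\cdot|\partial_{V_1}(Z)|.
\]
For part (ii), replace $W$ if necessary by the span of a free right $B$-basis of $A$ (this is legitimate because the key estimate uses only $A=WB$, not $1\in W$). Then $W\otimes_K B\to A$ is injective, so $|WZ|=|W|\cdot|Z|$ exactly, and with $k=|W|$ the key bound gives $I_*(kn;A,V)\leq k\cdot|\partial_{V_1}(Z)|$ for every $Z\subset B$ of dimension $n$. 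Taking the infimum over such $Z$ yields $I_*(kn;A,V)\leq k\cdot I_*(n;B,V_1)$, i.e.\ $I_*(A,V)\preceq I_*(B,V_1)$.

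For part (i) one keeps the original $W$, which is a subframe, so $1\in W$ forces $WZ\supset Z$ and we may write $|WZ|=|Z|+r$ with $r\geq 0$. Applying the hypothesis of (i) with $n=|Z|$,
\[
C_1 I_*(C_2 n;A,V)\leq I_*(n+r;A,V)=I_*(|WZ|;A,V)\leq |\partial_V(WZ)|\leq |W|\cdot|\partial_{V_1}(Z)|,
\]
and taking the infimum over $Z\subset B$ of dimension $n$ yields $C_1 I_*(C_2 n;A,V)\leq |W|\cdot I_*(n;B,V_1)$, i.e.\ again $I_*(A,V)\preceq I_*(B,V_1)$. The hard part is precisely the mismatch between $|WZ|$ and $|Z|$: without freeness $|WZ|$ fluctuates in $[|Z|,|W|\cdot|Z|]$ as $Z$ varies, so one cannot read off $I_*(\cdot;A,V)$ at a single scale; the shift-comparability hypothesis in (i) exists precisely to convert an $I_*$-bound at $|WZ|$ into one at $|Z|$, and freeness in (ii) bypasses the need for it by pinning $|WZ|$ down exactly.
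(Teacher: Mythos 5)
Your proof is correct and follows essentially the same route as the paper's: the same subframe $V_1$ with $VW\subseteq WV_1$, the same key bound $|\partial_V(WZ)|\leq |W|\,|\partial_{V_1}(Z)|$ via the auxiliary lemma, the shift hypothesis to pass from $|WZ|$ to $|Z|$ in (i), and the free basis to pin $|WZ|=|W||Z|$ in (ii). Your added remark that the key estimate does not require $1\in W$ (so one may take $W$ to be the span of a free basis in (ii)) is a small but welcome clarification of a point the paper glosses over.
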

\begin{proof}
Since $A$ is a finite right $B$-module, there exists a subframe
$W$ of $A$ such that $A=WB$. It's clear that given the subframe
$V$ of $A$ there exists a subframe $V_1$ of $B$ such that
$VW\subseteq WV_1$. For any subspace $Z$ of $B$, using the
previous lemma, we get
\begin{eqnarray*}
I_*(|WZ|;A,V) & \leq &
|\partial_{V}(WZ)|=\left|\frac{VWZ}{WZ}\right|
    \leq  \left|\frac{WV_1Z}{WZ}\right|\\
 & \leq & |W|\left|\frac{V_1Z}{Z}\right| =|W||\partial_{V_1}(Z)|.
\end{eqnarray*}
Now the hypothesis in (i) gives
$$
C_1I_*(C_2|Z|;A,V)\leq I_*(|WZ|;A,V)\leq |W||\partial_{V_1}(Z)|,
$$
which implies $I_*(A,V)\preceq I_*(B,V_1)$.

In (ii), if $A=\oplus_{i=1}^{k}w_iB$, we choose $W$ to be the span
of $\{1=w_1,w_2,\dots,w_k\}$. Then
$$
I_*(|W||Z|;A,V)=I_*(|WZ|;A,V)\leq |W||\partial_{V_1}(Z)|,
$$
which again gives $I_*(A,V)\preceq I_*(B,V_1)$.
\end{proof}
\begin{Remrk}
Notice that the hypothesis in (i) of this lemma is a generalization
of the property of being weakly monotone increasing. All the
isoperimetric profiles we know satisfy this property.
\end{Remrk}

\begin{Ques}
Is it true that $I_*(A)$ satisfies the property in (i) for any
algebra $A$? Is it true if $A$ is a domain?
\end{Ques}

We are now able to prove the following corollary (cf.
\cite{zhang}, Corollary 3.3).
\begin{Cor}
Let $B\subset A$ be prime right Goldie algebras with isoperimetric
profiles, and suppose that $I_*(B)$ is subadditive. Then
$I_*(B)\preceq I_*(A)$. If moreover $A$ is a finite right
$B$-module and $B$ is artinian, then $I_*(A)\sim I_*(B)$.
\end{Cor}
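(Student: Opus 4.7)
The strategy is to reduce both $B$ and $A$ to their underlying division algebras via Goldie's theorem and the localization machinery of Section 2.2. Since $B$ and $A$ are prime right Goldie, their right Ore sets $\Omega_B$ and $\Omega_A$ of regular elements give right rings of fractions $B\Omega_B^{-1} \cong M_m(D_B)$ and $A\Omega_A^{-1} \cong M_n(D_A)$ for some division algebras $D_B$ and $D_A$. Corollary \ref{localization}(i) yields $I_*(B) \sim I_*(B\Omega_B^{-1})$ and $I_*(A) \sim I_*(A\Omega_A^{-1})$. Since $D_B$ and $D_A$ are division algebras, Theorem \ref{domainsubadditive} guarantees their isoperimetric profiles are subadditive, so Proposition \ref{bigcorollary}(4) applied to the matrix presentations yields $I_*(B\Omega_B^{-1}) \sim I_*(D_B)$ and $I_*(A\Omega_A^{-1}) \sim I_*(D_A)$.

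For the first inequality, the inclusion $B \subset A$ extends to an inclusion $B\Omega_B^{-1} \subset A\Omega_A^{-1}$, so $D_B$ sits as a division subalgebra of $A\Omega_A^{-1}$. Proposition \ref{subdivisionalg} then gives $I_*(D_B) \preceq I_*(A\Omega_A^{-1})$. Chaining the asymptotic equivalences yields $I_*(B) \sim I_*(D_B) \preceq I_*(A\Omega_A^{-1}) \sim I_*(A)$, which is the desired conclusion.

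For the second statement, under the additional hypothesis that $B$ is artinian, $B$ coincides with its own right ring of quotients, so directly $B \cong M_m(D_B)$ and $D_B$ is a division subalgebra of $B$, hence of $A$. Since $B$ is a finite right $D_B$-module (of rank $m^2$) and $A$ is a finite right $B$-module by assumption, $A$ is a finite right $D_B$-module. Proposition \ref{bigcorollary}(2) then gives $I_*(A) \sim I_*(D_B)$, and combined with $I_*(B) \sim I_*(D_B)$ from Proposition \ref{bigcorollary}(4) this yields $I_*(A) \sim I_*(B)$.

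The main obstacle is the step asserting that $B \subset A$ extends to $B\Omega_B^{-1} \subset A\Omega_A^{-1}$: one needs regular elements of $B$ to remain regular in $A$ (so $\Omega_B \subset \Omega_A$), together with compatibility of the two Ore conditions so that taking fractions with denominators from $\Omega_B$ inside $A$ agrees with the localization of $B$. This is natural in the prime right Goldie setting but does require verification for subalgebras of prime rings; once granted, the remainder of the argument is a mechanical assembly of the tools developed earlier in the section.
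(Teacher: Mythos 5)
Your overall architecture is the same as the paper's: localize to the simple artinian quotient rings provided by Goldie's theorem, strip off the matrix part with Proposition \ref{bigcorollary}(4) (using subadditivity of division-algebra profiles from Theorem \ref{domainsubadditive}), and compare the resulting division algebras via Proposition \ref{subdivisionalg}; your treatment of the second statement via Proposition \ref{bigcorollary}(2) is also essentially the paper's argument. The problem is the step you yourself flag as the ``main obstacle'': it is not a routine verification, and the mechanism you propose for it is false. Regular elements of a prime right Goldie subalgebra $B$ need \emph{not} remain regular in $A$. For instance, take $A=M_2(K[x])$ and let $B=K[y]$ with $y=\mathrm{diag}(x,0)\in A$. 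Then $B\cong K[t]$ is a prime Goldie domain and $y$ is regular in $B$, but $y\cdot\mathrm{diag}(0,1)=0$, so $y$ is a zero divisor in $A$. Hence $\Omega_B\not\subseteq\Omega_A$ in general, the inclusion $B\subset A$ does not extend to an inclusion $B\Omega_B^{-1}\subset A\Omega_A^{-1}$, and your chain $I_*(D_B)\preceq I_*(A\Omega_A^{-1})$ has no justification as written.

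The paper closes this gap differently: it first replaces $A$ by its simple artinian quotient ring $M_n(A')$ (Corollary \ref{localization}), and then invokes Proposition 3.1.16 of McConnell--Robson, which asserts that the quotient ring $Q(B)$ of the prime right Goldie subring $B$ embeds into $M_k(A')$ for some $k\leq n$. This embedding is not required to be induced by inverting $\Omega_B$ inside $A$; since the isoperimetric profile is an isomorphism invariant, an abstract embedding of $Q(B)\cong M_m(D_B)$, and hence of $D_B$, into $M_n(A')$ is all that Proposition \ref{subdivisionalg} needs. So the missing ingredient in your write-up is precisely this embedding theorem for quotient rings of prime Goldie subrings of simple artinian rings; without it (or some substitute), the first inequality does not follow.
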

\begin{proof}
By Goldie's Theorem, $A$ has a right quotient ring which is a simple
artinian algebra. Hence by Corollary \ref{localization} we may
assume that $A$ is a simple artinian ring $M_n(A')$ for some
division algebra $A'$. By Proposition 3.1.16 in \cite{macconnell},
the quotient ring $Q$ of $B$ embeds into $M_k(A')$ for some $k\leq
n$. Therefore by Corollary \ref{localization} and Proposition
\ref{bigcorollary}, (4), we may assume that $B$ is a division
algebra. Whence the first statement follows from Proposition
\ref{subdivisionalg}.

If $B$ is artinian and $A$ is finite as $B$-module, then $A$ is
artinian. Therefore the second statement follows from Lemma
\ref{finiterightmod} and Proposition \ref{bigcorollary}, (4).
\end{proof}

\subsection{Tensor products}

In this section we study the behavior of the isoperimetric profile
with respect to tensor products.
\begin{Prop} \label{tensorprodprofile}
Let $A$ and $B$ be two $K$-algebras, and let $V_A$ and $V_B$ be two
subframes of $A$ and $B$ respectively. If $V:=V_A\otimes 1+ 1\otimes
V_B$, then
$$
I_*(nm;A\otimes_KB,V)\leq mI_*(n;A,V_A)+nI_*(m;B,V_B).
$$
\end{Prop}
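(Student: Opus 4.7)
My plan is to exhibit an explicit subspace $W$ of $A\otimes_K B$ of dimension $nm$ whose boundary has dimension at most $mI_*(n;A,V_A)+nI_*(m;B,V_B)$; since $I_*$ is defined as an infimum, this will give the inequality. The natural candidate is a tensor product of optimal subspaces on each factor.

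First I would pick $W_A\subset A$ of dimension $n$ with $|\partial_{V_A}(W_A)|=I_*(n;A,V_A)$ and $W_B\subset B$ of dimension $m$ with $|\partial_{V_B}(W_B)|=I_*(m;B,V_B)$; such subspaces exist because dimensions are integers, so the infimum defining $I_*$ is attained. Set $W:=W_A\otimes W_B$, which has dimension $nm$. Then
$$
VW = (V_A\otimes 1+1\otimes V_B)(W_A\otimes W_B) = V_AW_A\otimes W_B + W_A\otimes V_BW_B,
$$
and since $1\in V_A$ and $1\in V_B$ we have $W\subset VW$, so $\partial_V(W) = VW/W = (VW+W)/W$ has dimension $|VW|-|W|$.

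Next I would decompose $V_AW_A = W_A\oplus U_A$ and $V_BW_B = W_B\oplus U_B$ for some complementary subspaces $U_A,U_B$; by construction $|U_A|=|\partial_{V_A}(W_A)|=I_*(n;A,V_A)$ and $|U_B|=|\partial_{V_B}(W_B)|=I_*(m;B,V_B)$. Sitting inside $V_AW_A\otimes V_BW_B$, the four pieces $W_A\otimes W_B$, $U_A\otimes W_B$, $W_A\otimes U_B$, $U_A\otimes U_B$ form a direct sum decomposition, and within this decomposition
\begin{align*}
V_AW_A\otimes W_B &= (W_A\otimes W_B)\oplus(U_A\otimes W_B),\\
W_A\otimes V_BW_B &= (W_A\otimes W_B)\oplus(W_A\otimes U_B).
\end{align*}
Hence their sum equals $(W_A\otimes W_B)\oplus(U_A\otimes W_B)\oplus(W_A\otimes U_B)$, which has dimension $nm + mI_*(n;A,V_A)+nI_*(m;B,V_B)$. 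Subtracting $|W|=nm$ yields the desired bound on $|\partial_V(W)|$, and taking the infimum over $W'$ of dimension $nm$ in $A\otimes_K B$ finishes the proof.

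The only subtle point is verifying the direct sum claim in the second paragraph, i.e.\ that the intersection $(V_AW_A\otimes W_B)\cap(W_A\otimes V_BW_B)$ equals $W_A\otimes W_B$; this is exactly the fact that tensor products respect direct sum decompositions, applied to the ambient space $V_AW_A\otimes V_BW_B = (W_A\oplus U_A)\otimes(W_B\oplus U_B)$. Everything else is an accounting of dimensions, so I do not expect any real obstacle.
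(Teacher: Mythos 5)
Your proposal is correct and follows essentially the same route as the paper: both test the isoperimetric inequality on the product subspace $W_A\otimes W_B$ and use $V(W_A\otimes W_B)=V_AW_A\otimes W_B+W_A\otimes V_BW_B$. The only cosmetic difference is that the paper bounds the boundary via subadditivity of quotient dimensions for arbitrary $W_A,W_B$, while you compute it exactly through the direct sum decomposition for optimal $W_A,W_B$ (whose existence you correctly justify, since the infimum is over nonnegative integers).
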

\begin{proof}
Given any two subspaces $W\subset A$ and $Z\subset B$, we have
\begin{eqnarray*}
I_*(|W||Z|;A\otimes_KB,V) & \leq & |\partial_V(W\otimes Z)|  =
\left|\frac{V_AW\otimes Z+ W\otimes V_BZ}{W\otimes Z} \right|\\
 & \leq & \left|\frac{V_AW\otimes Z}{W\otimes
Z} \right|+\left|\frac{W\otimes V_BZ}{W\otimes Z} \right| \\
 & = & |Z||\partial_{V_A}(W)|+ |W||\partial_{V_B}(Z)|,
\end{eqnarray*}
which gives the result.
\end{proof}
\begin{Cor} \label{tensorprodpolyn}
Let $A$ and $B$ be two $K$-algebras, let $V_A$ and $V_B$ be two
subframes of $A$ and $B$ respectively, and let $V:=V_A\otimes 1+
1\otimes V_B$. If $I_*(n;A,V_A)\preceq n^{1-1/r}$ and
$I_*(n;B,V_B)\preceq n^{1-1/s}$ for some real numbers $s\geq r\geq
1$, then
$$
I_*(n;A\otimes_KB,V)\preceq n^{1-\frac{1}{r+s}}.
$$
\end{Cor}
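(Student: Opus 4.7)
The plan is to substitute the hypothesized power-law bounds directly into Proposition \ref{tensorprodprofile} and then optimize the splitting of a target dimension $N$ into a product $nm$. By unpacking the definition of $\preceq$ we can absorb the scaling constants and assume there exist $D_A,D_B>0$ such that $I_*(n;A,V_A)\leq D_A n^{1-1/r}$ and $I_*(m;B,V_B)\leq D_B m^{1-1/s}$ for all positive integers $n,m$. Substituting into Proposition \ref{tensorprodprofile} then gives
$$
I_*(nm;A\otimes_K B,V)\;\leq\; D_A\, m\, n^{1-1/r}+D_B\, n\, m^{1-1/s}.
$$

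Next I would optimize the right-hand side. Writing $n=N^{\alpha}$ and $m=N^{1-\alpha}$ (so that $nm=N$), the two terms become $D_A N^{1-\alpha/r}$ and $D_B N^{1-(1-\alpha)/s}$. Equating the exponents yields $\alpha/r=(1-\alpha)/s$, hence $\alpha=r/(r+s)$, and at this choice both exponents equal $1-\frac{1}{r+s}$. Accordingly, for each $N$ I would take $n=\lfloor N^{r/(r+s)}\rfloor$ and $m=\lfloor N^{s/(r+s)}\rfloor$, so that $nm$ is comparable to $N$ up to a multiplicative constant and the bound above produces
$$
I_*(nm;A\otimes_K B,V)\;\leq\; C\, N^{1-\frac{1}{r+s}}
$$
for an explicit $C=C(D_A,D_B,r,s)$.

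The only mildly delicate point is the passage from an upper bound at the specific dimensions $nm$ to a bound at every dimension $N$: since as $N$ ranges over $\mathbb{N}$ the values $nm$ constructed above are always within a fixed multiplicative constant of $N$, this discrepancy is absorbed by the constants $C_1,C_2$ in the definition of $\preceq$, exactly as in the author's reasoning in Example \ref{examplenoprofile}. This yields the stated asymptotic $I_*(n;A\otimes_K B,V)\preceq n^{1-1/(r+s)}$.

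There is no real conceptual obstacle here; the main (and only) step of substance is the Hölder-type optimization in $\alpha$, which is a one-line calculus argument. The proof is essentially a careful bookkeeping exercise combining Proposition \ref{tensorprodprofile} with the optimal split $N=N^{r/(r+s)}\cdot N^{s/(r+s)}$.
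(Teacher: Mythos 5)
Your proof is correct and follows essentially the same route as the paper: substitute the power-law bounds into Proposition \ref{tensorprodprofile} and optimize the multiplicative split of the dimension, arriving at the exponent $r/(r+s)$. The only (immaterial) difference is that you assign the factor $N^{r/(r+s)}$ to $A$ and $N^{s/(r+s)}$ to $B$, which balances the two terms exactly, whereas the paper makes the opposite assignment and then uses $s\geq r$ to bound the first exponent; your treatment of the integrality/interpolation issue is at the same level of rigor as the paper's.
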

\begin{proof}
Given $t\in \mathbb{R}$, $0<t<1$ the previous proposition implies
\begin{eqnarray*}
I_*(n;A\otimes_KB,V) & \preceq &
n^{t}I_*(n^{1-t};A,V_A)+n^{1-t}I_*(n^{t};B,V_B)\\
 & \preceq & n^{t+(1-t)(1-1/r)}+ n^{t+(1-t)(1-1/s)}.
\end{eqnarray*}

Substituting $t=r/(r+s)$ we get
$$
I_*(n;A\otimes_KB,V)\preceq
n^{\frac{r}{r+s}+\frac{s}{r+s}\frac{r-1}{r}} +
n^{\frac{r}{r+s}+\frac{s}{r+s}\frac{s-1}{s}}\preceq
n^{\frac{r+s-1}{r+s}},
$$
since $s\geq r$, hence both the exponents in the sum above are
less or equal then the exponent $(r+s-1)/(r+s)$.
\end{proof}

We have also the following immediate consequence of Proposition
\ref{freeleftmod}.
\begin{Prop} \label{tensorprodlower}
If $A$ and $B$ are two $K$-algebras, $V$ is a subframe of $A$ and
$I_*(A,V)$ is subadditive, then
$$
I_*(A,V)\preceq I_*(A\otimes_KB,V\otimes 1).
$$
\end{Prop}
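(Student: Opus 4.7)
The plan is to realize $A$ as a subalgebra of $A\otimes_K B$ via the embedding $a\mapsto a\otimes 1$, and then invoke Proposition \ref{freeleftmod}. Under this identification, the subframe $V\subset A$ corresponds to the subframe $V\otimes 1$ of $A\otimes 1$, and the hypothesis that $I_*(A,V)$ is subadditive transfers verbatim to subadditivity of $I_*(A\otimes 1, V\otimes 1)$. Similarly, for any subspace $W\subset A$ we have an isometric correspondence $W \leftrightarrow W\otimes 1$ preserving dimension and the boundary with respect to $V$, which gives $I_*(n;A,V)=I_*(n;A\otimes 1, V\otimes 1)$.

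To apply Proposition \ref{freeleftmod} I need to check that $A\otimes_K B$ is free as a left $(A\otimes 1)$-module. Picking any $K$-basis $\{b_i\}_{i\in I}$ of $B$, the defining property of the tensor product says that every element of $A\otimes_K B$ admits a unique expression
$$
\sum_i a_i\otimes b_i = \sum_i (a_i\otimes 1)(1\otimes b_i),
$$
with only finitely many $a_i\in A$ nonzero, so $\{1\otimes b_i\}_{i\in I}$ is a free left $(A\otimes 1)$-basis. Proposition \ref{freeleftmod}, applied to the inclusion $A\otimes 1\subset A\otimes_K B$ with subframe $V\otimes 1$, then gives
$$
I_*(A\otimes 1, V\otimes 1)\preceq I_*(A\otimes_K B, V\otimes 1),
$$
and translating the left-hand side back via the isomorphism $A\cong A\otimes 1$ yields the desired inequality.

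There is essentially no obstacle here beyond bookkeeping: the content is the observation that a tensor product is free over each of its tensor factors, which lets the freeness hypothesis of Proposition \ref{freeleftmod} be met for free (pun intended). The only subtle point worth stating explicitly is that both the conclusion and the hypothesis are formulated with respect to a \emph{specific} subframe $V$, not with respect to the asymptotic class $I_*(A)$; this matters because subadditivity is imposed on $I_*(A,V)$ and the inequality is measured through the particular subframe $V\otimes 1$ of $A\otimes_K B$.
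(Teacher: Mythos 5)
Your proof is correct and is exactly the argument the paper intends: the paper states this proposition as an immediate consequence of Proposition \ref{freeleftmod}, and your verification that $A\otimes_K B$ is free as a left $(A\otimes 1)$-module over the image of $A$ is precisely the missing bookkeeping. Nothing further is needed.
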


The relation given in Proposition \ref{tensorprodprofile} looks a
bit strange. A more natural relation holds for F\o lner functions,
as we will see later in the paper.

\subsection{Filtered and Graded Algebras}

In this section we consider a filtration on $A$, i.e. a sequence
of subspaces $A_i$ of $A$
$$
A_0\subset A_1\subset A_2\subset \cdots \subset A,\quad
\bigcup_{n=0}^{\infty}A_n=A,
$$
with the property that $A_i A_j \subset A_{i+j}$ for all $i,j\geq
0$. We assume also that $A_0=K$ and that $A_1$ generates $A$.

Given a filtered algebra, we can consider its associated graded
algebra
$$
gr(A):=\bigoplus_{i\geq 0} A_i/A_{i-1},
$$
where we agree that $A_{-1}=\{0\}$. This is an algebra with the
multiplication derived by the rule
$$
[x+A_{i-1}]\cdot [y+A_{j-1}]= [xy+A_{i+j-1}].
$$

For any subframe $V\subset A_1$, we can view $V$ also as a subframe of $gr(A)$ via the identification $V\equiv (V\cap A_0)/A_{-1}\oplus
V/A_0=K\oplus V/K$.

The following theorem is a more general form of Theorem
\ref{theorem2}.
\begin{Thm} \label{gradedassociated}
If $A$ is an algebra with a filtration given as above, and $gr(A)$
is a domain, then $I_*(gr(A),V)\preceq I_*(A,V)$ for any subframe
$V\subset A_1$.
\end{Thm}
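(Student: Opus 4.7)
The plan is to prove the sharper per-subspace estimate: for every finite-dimensional $W \subseteq A$ one can exhibit a subspace $\widetilde{W} \subseteq gr(A)$ with $|\widetilde{W}| = |W|$ and $|\partial_V(\widetilde{W})|_{gr(A)} \leq |\partial_V(W)|_A$. Taking the infimum over subspaces of dimension $n$ then gives $I_*(n;gr(A),V) \leq I_*(n;A,V)$ for every $n$, which immediately yields $I_*(gr(A),V) \preceq I_*(A,V)$.

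The natural candidate for $\widetilde{W}$ is the associated graded of the filtered subspace $W$ itself. Setting $W_j := W \cap A_j$, the maps $W_j/W_{j-1} \hookrightarrow A_j/A_{j-1}$ are injective (since $W_j \cap A_{j-1} = W_{j-1}$) and assemble into an embedding
\[
gr(W) := \bigoplus_j W_j/W_{j-1} \hookrightarrow gr(A),
\]
with $|gr(W)| = |W|$. Take $\widetilde{W} := gr(W)$.

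The heart of the argument is the dimension inequality $|V \cdot gr(W)|_{gr(A)} \leq |VW|_A$. Using the decomposition $V = K \oplus V/K$ as a subspace of $A_0 \oplus A_1/A_0 \subset gr(A)$ (valid since $V \subseteq A_1$ and $1 \in V$), a direct calculation identifies the degree-$k$ component of $V \cdot gr(W)$ with the image of $W_k + V \cdot W_{k-1}$ under the quotient $A_k \twoheadrightarrow A_k/A_{k-1}$. Since $1 \in V$ forces $W \subseteq VW$, this subspace is contained in $VW \cap A_k$, so its image embeds into $(VW \cap A_k)/(VW \cap A_{k-1})$, the $k$-th piece of the associated graded of the filtered subspace $VW$. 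Summing over $k$ gives $|V \cdot gr(W)| \leq |gr(VW)| = |VW|$, and because $gr(W) \subseteq V \cdot gr(W)$ (again from $1 \in V$) we conclude
\[
|\partial_V(gr(W))|_{gr(A)} = |V \cdot gr(W)| - |gr(W)| \leq |VW| - |W| = |\partial_V(W)|_A.
\]
The main obstacle I expect is the careful identification of the degree-$k$ piece of $V \cdot gr(W)$ through the multiplications $A_i/A_{i-1} \otimes A_j/A_{j-1} \to A_{i+j}/A_{i+j-1}$ that define the product of $gr(A)$, and matching this image with the corresponding piece of $gr(VW)$ inside $A_k/A_{k-1}$.
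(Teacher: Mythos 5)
Your proposal is correct, and it reaches the conclusion by a genuinely leaner route than the paper. Both arguments use the same test subspace $gr(W)=\bigoplus_k (W\cap A_k)/(W\cap A_{k-1})$, the same observation $|gr(W)|=|W|$, and the same target inequality $|V\cdot gr(W)|\leq |gr(VW)|=|VW|$. The difference is in how that target is reached. The paper first proves the \emph{equality} $|a\,gr(W)|=|gr(aW)|$ for each $a\in A_1$, and this is exactly where the hypothesis that $gr(A)$ is a domain enters: one must rule out $x\in A_{i+p}\setminus A_i$ with $ax\in A_{i+1}$, which would produce a zero divisor in $gr(A)$; the boundary estimate is then assembled from these equalities by a chain of identifications of graded pieces. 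You instead prove only the one-sided containment
$$
W_k+VW_{k-1}+A_{k-1}\ \subseteq\ (VW\cap A_k)+A_{k-1},
$$
identifying the degree-$k$ piece of $V\cdot gr(W)$ inside the degree-$k$ piece of $gr(VW)$; this containment is immediate from $1\in V$ and $A_1A_{k-1}\subseteq A_k$ and requires no hypothesis on $gr(A)$ whatsoever. Summing dimensions over $k$ gives $|V\cdot gr(W)|\leq |VW|$ and hence the exact per-dimension inequality $I_*(n;gr(A),V)\leq I_*(n;A,V)$. What your approach buys is therefore twofold: it is shorter, and it shows the theorem holds with the domain hypothesis on $gr(A)$ deleted (consistently with the fact that the leading-term map is a valuation in Zhang's sense even without condition (f5)). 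What it gives up is only the sharper structural fact, valid when $gr(A)$ is a domain, that multiplication by a single element $a\in A_1$ preserves the graded dimension count exactly; that extra precision is not needed for the isoperimetric inequality.
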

\begin{proof}
Given a subspace $W$ of $A$ we define $W_i=W\cap A_i$ and
$gr(W)=\bigoplus_{i\geq 0}W_i/W_{i-1}$. Observe that $gr(W)$ is a
finite dimensional subspace of $gr(A)$.

The first remark is that $|W|=|gr(W)|$: this can be seen looking
at a basis for $W_i$ and completing it to a basis of $W_{i+1}$ (if
$W_i\neq W_{i+1}$, otherwise look at the next index) for each $i$.
These basis elements clearly give a basis for $gr(W)$.

Now we want to compare $|\partial_{V}(W)|$ and
$|\partial_{V}(gr(W))|$. The remark we need is that for any finite
dimensional subspace $W$ of $A$, and any element $a\in A_1$ we
have
$$
|a\, gr(W)| = |gr(aW)|,
$$
where $a\, gr(W)$ is a short notation for $[a+A_0]gr(W)$.

We have
$$
a\, gr(W)= a\, \bigoplus_{i\geq 0}W_i/W_{i-1} = \bigoplus_{i\geq
0}\frac{aW_i}{aW_{i}\cap A_i},
$$
and
$$
gr(aW)=\bigoplus_{i\geq 0}\frac{aW\cap A_i}{aW\cap A_{i-1}},
$$
hence we want to show that
$$
\frac{aW\cap A_{i+1}}{aW\cap A_{i}}= \frac{aW_i}{aW_i\cap A_{i}}.
$$
Clearly $aW_i=a(W\cap A_i)\subseteq aW\cap A_{i+1}$. If the other
inclusion is false, then there exists $x\in W\setminus A_i$ with
$ax\in A_{i+1}$. So $x\in A_{i+p}\setminus A_i$ for some $p\geq
1$, with $ax\in A_{i+1}$. This gives a zero divisor in $gr(A)$,
which is a contradiction. Hence $aW_i=aW\cap A_{i+1}$.

Similarly $aW_i\cap A_i = aW\cap A_i$. In fact, it's obvious that
$aW_i\cap A_i \subseteq aW\cap A_i$. If the other inclusion is
false then there exists $x\in W\setminus A_i$ with $ax\in A_{i}$.
So $x\in A_{i+p}\setminus A_i$ for some $p\geq 1$, with $ax\in
A_{i}$. Again, this gives a zero divisor in $gr(A)$. Hence
$aW_i\cap A_i = aW\cap A_i$.

This proves the equality we wanted, giving $|gr(aW)|=|a\, gr(W)|$.

Let's now choose a basis $1=a_1,a_2,\dots ,a_r$ of $V$. We have
\begin{eqnarray*}
|\partial_{V}(gr(W))| & = & \left|\frac{V\,
gr(W)}{gr(W)}\right|=\left|\sum_j a_j \, gr(W)\right|-|gr(W)|=
\end{eqnarray*}
\begin{eqnarray*}
 & = & \left|\bigoplus_{i\geq 0}\sum_j\, \frac{a_j(W\cap
A_i)}{a_j(W\cap A_i)\cap A_i}\right|-|gr(W)|=\\
 & = & \sum_i \left|\sum_j\, \frac{a_j(W\cap A_i)}{a_j(W\cap A_i)\cap
A_i}\right|-|gr(W)|= \sum_i \left|\sum_j\, \frac{a_jW\cap
A_{i+1}}{a_jW\cap A_i}\right|-|gr(W)|=\\
& = & \sum_i \left|\sum_j\, \frac{a_jW\cap A_{i+1}}{(\sum_j
a_j)W\cap A_i}\right|-|gr(W)|\leq \sum_i \left| \frac{(\sum_j
a_j)W\cap A_{i+1}}{(\sum_j a_j)W\cap A_i}\right|-|gr(W)|=\\
 & = & \left|\bigoplus_i  \frac{(\sum_j a_j)W\cap A_{i+1}}{(\sum_j
a_j)W\cap A_i}\right|-|gr(W)|= |gr(VW)|-|gr(W)|=|VW|-|W|=\\
& = & |\partial_{V}(W)|.
\end{eqnarray*}

This gives $I_*(gr(A),V)\preceq I_*(A,V)$.
\end{proof}

In \cite{zhang} (see also \cite{zhang1}) Zhang considers a more
general setting.
\begin{Def}[\cite{zhang}]
Let $A$ and $B$ two $K$-algebras and let $\nu$ be a map from $A$ to
$B$. We call $\nu$ a \textit{valuation} from $A$ to $B$ if the
following conditions hold: \begin{itemize}
    \item[(v1)] $\nu(ta)=t\nu(a)$ for all $a\in A$ and $t\in K$;
    \item[(v2)] $\nu(a)\neq 0$ for all nonzero $a\in A$;
    \item[(v3)] for any $a,b\in A$, either $\nu(a)\nu(b)=\nu(ab)$
    or $\nu(a)\nu(b)=0$;
    \item[(v4)] for any subspace $W$ of $A$ $|\nu(W)|=|W|$.
\end{itemize}
\end{Def}

The main example of a valuation is the leading-term map of a
$\Gamma$-filtered algebra, where $\Gamma$ is any ordered semigroup.
Let $A$ be an algebra with a filtration $\{A_{\gamma}\mid \gamma\in
\Gamma\}$ of $A$, which satisfies the following conditions:
\begin{itemize}
    \item[(f0)] $K\subset A_{e}$ where $e$ is the unit of
    $\Gamma$;
    \item[(f1)] $A_{\alpha}\subset A_{\beta}$ for all $\alpha <
    \beta$ in $\Gamma$;
    \item[(f2)] $A_{\alpha}A_{\beta}\subset A_{\alpha\beta}$ for
    all $\alpha,\beta\in \Gamma$;
    \item[(f3)] $A=\cup_{\gamma\in
    \Gamma}(A_{\gamma}-A_{<\gamma})$, where $A_{<\gamma}=\cup_{\alpha <
    \gamma}A_{\alpha}$;
    \item[(f4)] $1\in A_{e}-A_{< e}$ (and hence $K\subset A_{e}-A_{<
    e}$).
\end{itemize}

Then we define the associated graded algebra to be
$gr(A):=\oplus_{\gamma\in \Gamma}A_{\gamma}/A_{< \gamma}$ with the
multiplication determined by $(a+A_{< \alpha})(b+A_{< \beta}) =ab+
A_{< \alpha\beta}$. Notice that this is the definition we gave
before with $\Gamma=\mathbb{N}$.

We define a map $\nu : A\rightarrow gr(A)$ by $\nu(a)=a+A_{<
\gamma}$ for all $a\in A_{\gamma}-A_{<\gamma}$. This $\nu$ is
called the \textit{leading-term map} of $A$ and it is easy to see
that it satisfies (v1,2,3,4) (see \cite{zhang1}, Section 6). If
also\begin{itemize}
    \item[(f5)] $gr(A)$ is a $\Gamma$-graded domain,
\end{itemize}
then $\nu(a)\nu(b)=0$ will not happen in (v3).

\begin{Thm}[compare to \cite{zhang}, Theorem 4.3] \label{valuationinequality}
If $A$ and $B$ are two $K$-algebras, and $\nu$ is a valuation from
$A$ to $B$, then
$$
I_*(B,\nu(V))\preceq I_*(A,V).
$$
\end{Thm}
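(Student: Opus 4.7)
The plan is to mimic the proof of Theorem \ref{gradedassociated} at this more abstract level, using only the axioms (v1)--(v4) of a valuation in place of the specific filtered/graded structure. In fact, the abstraction provided by the valuation axioms makes the argument considerably cleaner than the proof of Theorem \ref{gradedassociated}.

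Fix a subspace $W\subset A$. Since $|\nu(W)|=|W|$ by (v4), it will suffice to establish
$$|\partial_{\nu(V)}(\nu(W))| \leq |\partial_V(W)|,$$
for then taking the infimum over all $W\subset A$ of dimension $n$ (noting $|\nu(W)|=n$ as well) yields $I_*(n;B,\nu(V)) \leq I_*(n;A,V)$, which in particular gives the asserted asymptotic inequality $I_*(B,\nu(V)) \preceq I_*(A,V)$.

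The central step is the inclusion $\nu(V)\nu(W) \subseteq \nu(VW)$. Any element of $\nu(V)\nu(W)$ is a $K$-linear combination of products $\nu(v)\nu(w)$ with $v\in V$ and $w\in W$; by (v1), scalar coefficients can be absorbed into the $\nu(w)$ factors, so we reduce to controlling the products $\nu(v)\nu(w)$ themselves. Axiom (v3) forces each such product to equal either $0$ or $\nu(vw)$, and in the second case $\nu(vw) \in \nu(VW)$. Hence $\nu(V)\nu(W) \subseteq \nu(VW)$, and applying (v4) gives $|\nu(V)\nu(W)| \leq |\nu(VW)| = |VW|$. Since $\nu(V)$ must be a subframe of $B$ for $I_*(B,\nu(V))$ to make sense, we have $1\in\nu(V)$, whence $\nu(W)\subseteq \nu(V)\nu(W)$ and
$$|\partial_{\nu(V)}(\nu(W))| = |\nu(V)\nu(W)| - |\nu(W)| \leq |VW| - |W| = |\partial_V(W)|,$$
as required.

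The main (and essentially only) obstacle is the inclusion $\nu(V)\nu(W)\subseteq \nu(VW)$. This is where the non-linearity of $\nu$ must be handled carefully: even though $\nu$ need not be additive, every spanning product of $\nu(V)\nu(W)$ is controlled term-by-term by (v3), and the ``or $\nu(a)\nu(b)=0$'' alternative in that axiom is precisely what keeps us inside $\nu(VW)$ (exactly playing the role that absence of zero divisors in $gr(A)$ played in the proof of Theorem \ref{gradedassociated}). Compared to the direct-sum bookkeeping with the basis $a_1,\dots,a_r$ and the element $\sum_j a_j$ used in the graded proof, the valuation axioms deliver this key inclusion in one clean step.
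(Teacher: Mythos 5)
Your proof is correct and takes essentially the same route as the paper's: the paper likewise reduces everything to the single chain $|\partial_{\nu(V)}(\nu(W))|\le |\nu(VW)|-|\nu(W)|=|VW|-|W|=|\partial_V(W)|$, merely citing Zhang's Lemma 4.1(3) for the key containment $\nu(V)\nu(W)\subseteq \nu(VW)$ that you derive directly from (v1) and (v3). (Your appeal to $1\in\nu(V)$ is not really needed and not literally guaranteed by the axioms; it is cleaner to note that adjoining the identity never changes the boundary, so $|\partial_{\nu(V)}(\nu(W))|=|\nu(V)\nu(W)+\nu(W)|-|\nu(W)|$, and $\nu(W)\subseteq\nu(VW)$ because $W\subseteq VW$.)
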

\begin{proof}
If $W\subset A$, using Lemma 4.1, (3) in \cite{zhang}, we have
\begin{eqnarray*}
|\partial_{V}(W)| & = & |VW|-|W|=|\nu(VW)|-|\nu(W)|\\
 & \geq & |\nu(V)\nu(W)|-|\nu(Z)|=|\partial_{\nu(V)}(\nu(W))|,
\end{eqnarray*}
which gives $I_*(A,V)\succeq I_*(B,\nu(V))$, as we wanted.
\end{proof}

If $\Gamma$ is an ordered semigroup, $B$ is a $\Gamma$-filtered
graded $K$-algebra with the associated graded algebra $gr(B)$ and
$A$ is a $K$-algebra, then $A\otimes_K B$ is $\Gamma$-filtered, and
its associated graded is isomorphic to $A\otimes_K gr(B)$. Here is
another immediate consequence of Theorem \ref{valuationinequality}:
\begin{Cor}
If $\Gamma$ is an ordered semigroup, $A$ and $B$ are two finitely
generated $K$-algebras and $B$ is $\Gamma$-filtered, then
$$
I_*(A\otimes_K gr(B))\preceq I_*(A\otimes_K B).
$$
\end{Cor}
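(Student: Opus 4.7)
The plan is to reduce this corollary to Theorem \ref{valuationinequality} by constructing an appropriate $\Gamma$-filtration on $A\otimes_K B$ and using its leading-term map as the valuation. Specifically, since $B$ carries a $\Gamma$-filtration $\{B_\gamma\}_{\gamma\in\Gamma}$ satisfying (f0)--(f4), I would induce a $\Gamma$-filtration on $A\otimes_K B$ by setting
\[
(A\otimes_K B)_\gamma := A\otimes_K B_\gamma,\qquad (A\otimes_K B)_{<\gamma}=A\otimes_K B_{<\gamma}.
\]
Then I would verify (f0)--(f4) directly: (f0) and (f4) follow from $1\otimes 1\in A\otimes_K B_e$ together with $1\notin B_{<e}$; (f1) and (f2) follow from the corresponding properties of $\{B_\gamma\}$ and the bilinearity of the tensor product; and (f3) follows from $B=\cup_\gamma(B_\gamma-B_{<\gamma})$ together with the fact that every element of $A\otimes_K B$ lies in $A\otimes_K B_\gamma$ for some $\gamma$. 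Using that $K$ is a field and hence that $A\otimes_K(-)$ is exact, one gets the canonical identification
\[
gr(A\otimes_K B)=\bigoplus_{\gamma\in\Gamma}\frac{A\otimes_K B_\gamma}{A\otimes_K B_{<\gamma}}\cong A\otimes_K\bigoplus_{\gamma\in\Gamma}\frac{B_\gamma}{B_{<\gamma}}=A\otimes_K gr(B).
\]

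Next, the leading-term map $\tilde{\nu}\colon A\otimes_K B\to A\otimes_K gr(B)$ of this induced filtration is a valuation in the sense of (v1)--(v4), exactly as for $B$ itself. I would now choose frames carefully so that the statement of Theorem \ref{valuationinequality} delivers a comparison between frames that each measure the isoperimetric profile on their respective algebras. Since $A$ and $B$ are finitely generated, pick a frame $V_A$ of $A$ and a frame $V_B$ of $B$ with $V_B\subset B_\gamma$ for some $\gamma$ (this is possible because only finitely many generators are needed, each in some $B_{\gamma_i}$). Set $V:=V_A\otimes 1+1\otimes V_B$, a frame of $A\otimes_K B$. A short computation using (f4) and $V_B\subset B_\gamma$ shows $\tilde{\nu}(V)=V_A\otimes 1+1\otimes \nu(V_B)$, and because $\nu(V_B)$ generates $gr(B)$ (the leading terms of a generating set of a $\Gamma$-filtered algebra generate the associated graded), $\tilde{\nu}(V)$ is a frame of $A\otimes_K gr(B)$.

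With these choices, Theorem \ref{valuationinequality} gives
\[
I_*(A\otimes_K gr(B),\tilde{\nu}(V))\preceq I_*(A\otimes_K B,V).
\]
Since both algebras are finitely generated, both sides measure the (well-defined) isoperimetric profile by Proposition \ref{geometricindependence}, and we conclude $I_*(A\otimes_K gr(B))\preceq I_*(A\otimes_K B)$.

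The main obstacle is not the comparison itself, which is pure bookkeeping once the setup is in place, but rather the care required to make sure the induced filtration on $A\otimes_K B$ really satisfies (f0)--(f4) and that the natural map to $A\otimes_K gr(B)$ is a valuation with $\tilde{\nu}(V)$ a frame. The delicate point is the identification of $gr(A\otimes_K B)$ with $A\otimes_K gr(B)$: this rests on the exactness of tensoring with $A$ over the field $K$, which is needed to commute $A\otimes_K(-)$ with the quotients $B_\gamma/B_{<\gamma}$. Once that identification is established, the only other subtlety is choosing $V_B$ so that $\tilde{\nu}(V)$ is literally a frame, rather than merely a subframe, of $A\otimes_K gr(B)$.
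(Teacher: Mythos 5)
Your proposal is correct and follows exactly the route the paper intends: the paper's entire argument consists of the remark, stated just before the corollary, that $A\otimes_K B$ inherits a $\Gamma$-filtration with $gr(A\otimes_K B)\cong A\otimes_K gr(B)$, after which the inequality is an immediate application of Theorem \ref{valuationinequality} to the leading-term map. You have simply filled in the verification of (f0)--(f4), the identification of the associated graded, and the choice of frames, all of which the paper leaves implicit.
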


\subsection{Ore extensions}

In this section we study how the isoperimetric profile behaves in
Ore extensions. For the definition of an Ore extension we refer to
\cite{lenagan}.

\begin{Prop}
Let $A$ be an algebra, $\sigma$ an automorphism of $A$ and
$\delta$ a $\sigma$-derivation. If $I_*(A,V)$ is subadditive for
some subframe $V\subset A$, then we have
$$
I_*(A,V)\preceq I_*(A[x,\sigma,\delta],V+Vx).
$$
\end{Prop}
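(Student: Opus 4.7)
The plan is to reduce this statement to Proposition \ref{freeleftmod}, which provides exactly the kind of inequality we need for free left modules over subalgebras. The key structural fact about the Ore extension $R := A[x,\sigma,\delta]$ is that, as a left $A$-module, it is free on the basis $\{1, x, x^2, x^3, \dots\}$; this is a standard consequence of the multiplication rule $xa = \sigma(a)x + \delta(a)$ together with the hypothesis that $\sigma$ is an automorphism (so that every element of $R$ has a unique expression as a finite left $A$-linear combination of $1, x, x^2,\dots$).

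With this in hand, observe that $V$, being a subframe of $A$, is also a subframe of $R$ (it is finite dimensional and contains the identity of $R$). The inclusion $A \hookrightarrow R$ together with the freeness of $R$ as a left $A$-module puts us precisely in the setting of Proposition \ref{freeleftmod}. Combined with the subadditivity hypothesis on $I_*(A,V)$, that proposition yields
$$
I_*(A,V) \preceq I_*(R,V).
$$

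To finish, I would pass from $V$ to the larger subframe $V+Vx$ using the trivial monotonicity of the isoperimetric profile in its subframe argument: if $V_1 \subset V_2$ then for every finite dimensional subspace $W \subset R$ we have $V_1 W \subset V_2 W$, so $|\partial_{V_1}(W)| \leq |\partial_{V_2}(W)|$, and taking infima over $|W|=n$ gives $I_*(n;R,V_1) \leq I_*(n;R,V_2)$. Applied to $V \subset V+Vx$ and chained with the previous inequality, this yields
$$
I_*(A,V) \preceq I_*(R,V) \leq I_*(R,V+Vx),
$$
as required. There is no real obstacle: the only substantive ingredients are the freeness of $R$ over $A$ on the powers of $x$ and the subadditivity assumption, both of which feed directly into the machinery already established in Proposition \ref{freeleftmod}.
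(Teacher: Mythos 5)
Your proof is correct, and it takes a genuinely different route from the paper. The paper proves this proposition by first passing to the associated graded algebra $A[x,\sigma]$ of the filtration by $x$-degree (via Theorem \ref{valuationinequality}), then reducing to graded subspaces $Z=\oplus_i Z_ix^i$ and estimating $|\partial_{V+Vx}(Z)|$ by hand using the relation $ax=x\sigma(a)$, with subadditivity invoked at the last step to reassemble $\sum_i I_*(|Z_i|)$ into $I_*(|Z|)$. You instead observe that $A[x,\sigma,\delta]=\bigoplus_{i\geq 0}Ax^i$ is free as a left $A$-module on the powers of $x$ (which is built into the definition of an Ore extension) and feed this directly into Proposition \ref{freeleftmod}, which already contains the triangularization-plus-subadditivity argument; the final monotonicity step $I_*(n;R,V)\leq I_*(n;R,V+Vx)$ is immediate from $VW\subseteq (V+Vx)W$. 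This is shorter, avoids the valuation machinery entirely, and in fact yields the formally stronger conclusion $I_*(A,V)\preceq I_*(A[x,\sigma,\delta],V)$ with the smaller, non-generating subframe $V$ — exactly the pattern the paper itself uses for tensor products in Proposition \ref{tensorprodlower}. What the paper's route buys in exchange is the intermediate reduction to $A[x,\sigma]$ and to graded subspaces, which is the part of the argument that interacts with Zhang's valuation framework and is reused in the surrounding discussion; but as a proof of the stated inequality, your argument is complete and sound.
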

\begin{proof}
There is a natural filtration of $A[x,\sigma,\delta]$ determined
by the degree of $x$, such that the associated graded algebra is
isomorphic to $A[x,\sigma]$. Hence there is a valuation $\nu$ from
$A[x,\sigma,\delta]$ to $A[x,\sigma]$, which by Theorem
\ref{valuationinequality} gives
$$
I_*(A[x,\sigma,\delta],W)\succeq I_*(A[x,\sigma],W),
$$
for any graded subframe $W=\oplus_{i= 0}^{m}W_ix^i$.

Hence it's enough to show that $I_*(A[x,\sigma],V+Vx)\succeq
I_*(A,V)$, where $V$ is a subframe of $A$. First observe that the
leading-term map of $A[x,\sigma]$ is a valuation from
$A[x,\sigma]$ to itself. Again by Theorem
\ref{valuationinequality} it follows that it's enough to consider
only the graded subspaces of $A[x,\sigma]$.

Let $V$ be a subframe of $A$. Given a graded subspace $Z\subset
A[x,\sigma]$, we have $Z=\oplus_{i=0}^{n}Z_ix^i$, where
$Z_i\subset A$ for all $i$. Since $ax=x\sigma (a)$ for all $a\in
A$, we get
\begin{eqnarray*}
|\partial_{V+Vx}(Z)| & = &  \left|\sum_{i=0}^{n}
VZ_ix^i+VxZ_ix^i\right|-|Z| \\
 & = & \left|\sum_{i=0}^{n+1}
(VZ_i+VZ_{i-1}^{\sigma^{-1}})x^i\right|-\sum_{i=1}^{n}|Z_i|\\
& = & \sum_{i=0}^{n+1}\left|VZ_i+VZ_{i-1}^{\sigma^{-1}}\right|-\sum_{i=1}^{n}|Z_i|\\
 & \geq & \sum_{i=0}^{n}|VZ_i|-\sum_{i=1}^{n}|Z_i|= \sum_{i=0}^{n}
 |\partial_V(Z_i)|\\
  & \geq & \sum_{i=1}^{n}I_*(|Z_i|;A,V)\geq C_2
  I_*(C_1(\sum_{i=1}^{n}|Z_i|);A,V)=
  C_2I_*(C_1|Z|;A,V),
\end{eqnarray*}
where by convention $Z_{-1}=Z_{n+1}=\{0\}$, and $C_1$ and $C_2$
are the two positive constants coming from the subadditivity
assumption. This shows that
$$
I_*(A[x,\sigma],V+VX)\succeq I_*(A,V),
$$
completing the proof.
\end{proof}

The following corollary follows from the previous proposition and
Theorem \ref{domainsubadditive}.
\begin{Cor}
Let $A$ be a domain, $\sigma$ an automorphism of $A$ and $\delta$
a $\sigma$-derivation. If $A$ is amenable, then for any subframe
$V\subset A$,
$$
I_*(A,V)\preceq I_*(A[x,\sigma,\delta],V+Vx).
$$

If $A$ is nonamenable, then $A[x,\sigma,\delta]$ is nonamenable.
\end{Cor}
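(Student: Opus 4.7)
The plan is to deduce the corollary directly from the preceding proposition together with Theorem~\ref{domainsubadditive}, handling the amenable and nonamenable cases separately. In both cases the whole content is that subadditivity of the isoperimetric profile of $A$, which is the hypothesis of the previous proposition, is automatic for a domain under either amenability assumption.

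In the amenable case, Theorem~\ref{domainsubadditive} asserts that $I_*(A,V)$ is subadditive for every subframe $V\subset A$ whenever $A$ is an amenable domain. This is exactly the hypothesis of the previous proposition, so applying it to the given subframe $V$ yields
$$
I_*(A,V)\preceq I_*(A[x,\sigma,\delta],V+Vx)
$$
with no further work.

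For the nonamenable case I would argue as follows. By Theorem~\ref{domainsubadditive}, $I_*(A)$ itself is subadditive, so for any frame $V_0$ of $A$ (which measures $I_*(A)$ by Proposition~\ref{geometricindependence}) the hypothesis of the previous proposition again holds, giving
$$
I_*(A,V_0)\preceq I_*(A[x,\sigma,\delta],V_0+V_0 x).
$$
By Corollary~\ref{nonamenable} we have $I_*(A,V_0)\sim n$, and since isoperimetric profiles are always (weakly) sublinear the displayed inequality forces $I_*(A[x,\sigma,\delta],V_0+V_0 x)\sim n$ as well. As $V_0+V_0 x$ generates $A[x,\sigma,\delta]$ whenever $V_0$ generates $A$, it is a frame of the Ore extension, so a second appeal to Corollary~\ref{nonamenable} identifies this with $I_*(A[x,\sigma,\delta])\sim n$ and concludes that $A[x,\sigma,\delta]$ is nonamenable.

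There is no serious obstacle here: the substantive work has already been absorbed into the preceding proposition (the filtration/leading-term reduction and the subadditive bookkeeping on graded pieces) and into Theorem~\ref{domainsubadditive} (the Ore-localization plus Zelmanov-type lemma ensuring subadditivity of the profile of a domain), and this corollary merely glues those two ingredients together. The only minor bookkeeping point is the verification that $V+Vx$ (or $V_0+V_0 x$) is a frame of $A[x,\sigma,\delta]$, which is immediate from the construction of an Ore extension.
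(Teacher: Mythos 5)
Your proposal is correct and is essentially the paper's argument: the paper proves this corollary exactly by combining the preceding proposition with Theorem~\ref{domainsubadditive}, which supplies the subadditivity hypothesis in both the amenable case (for every subframe) and the nonamenable case (for a subframe realizing the linear profile). The only nitpick is in the nonamenable case: since $A$ need not be finitely generated you should not speak of a frame $V_0$ of $A$, but it suffices to take the subframe $V$ witnessing $I_*(n;A,V)\sim n$ guaranteed by Corollary~\ref{nonamenable}, apply the proposition to that $V$, and conclude via Proposition~\ref{amenable} that $A[x,\sigma,\delta]$ is nonamenable because $V+Vx$ is a subframe with linear profile.
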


\begin{Rem}
Notice that in the proof of the previous proposition we used the
following obvious inequality
$$
\sum_{i=0}^{n+1}\left|VZ_i+VZ_{i-1}^{\sigma^{-1}}\right| \geq
\sum_{i=0}^{n}|VZ_i|.
$$

This inequality doesn't appear to be optimal and it's reasonable to expect a better one.

In this direction, in \cite{zhang}, Theorem 5.2, Zhang essentially
proves the following
\begin{Prop} \label{zhanglower}
Let $A$ be an algebra, $V\subset A$ a subframe, $\sigma$ an
automorphism of $A$ and $\delta$ a $\sigma$-derivation. If
$I_*(A,V)\succeq n^{\frac{d-1}{d}}$ for some $d\in \mathbb{R}$,
$d\geq 1$, then
$$
I_*(A[x,\sigma,\delta],V+Vx)\succeq n^{\frac{d}{d+1}}.
$$
\end{Prop}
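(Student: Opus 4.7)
The plan is to follow the filtration/valuation reduction from the preceding proposition, and then replace its crude pointwise inequality $|VZ_i+V\sigma(Z_{i-1})|\geq |VZ_i|$ with a sharper estimate that separately exploits contributions from the $A$-direction and the $x$-direction. Two applications of Theorem~\ref{valuationinequality}---first via the leading-term map associated with the $x$-degree filtration on $A[x,\sigma,\delta]$ (whose associated graded is $A[x,\sigma]$), and a second inside $A[x,\sigma]$ to restrict to graded subspaces---reduce the problem to proving the following: for every graded subspace $Z=\bigoplus_{i=0}^{n}Z_ix^i\subset A[x,\sigma]$, with the convention $Z_{-1}=Z_{n+1}=0$ and $f(i):=|VZ_i+V\sigma(Z_{i-1})|-|Z_i|$, one has $|\partial_{V+Vx}(Z)|=\sum_{i=0}^{n+1}f(i)\succeq |Z|^{d/(d+1)}$.

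Since $V\supseteq K$, the space $VZ_i+V\sigma(Z_{i-1})$ contains each of $VZ_i$, $V\sigma(Z_{i-1})$, and $Z_i$. This yields two independent lower bounds on $f(i)$: (a) $f(i)\geq |\partial_V(Z_i)|$, and (b) $f(i)\geq |Z_{i-1}|-|Z_i|$. Write $N:=|Z|$ and $M:=\max_i|Z_i|$. Summing (a) and applying the hypothesis $I_*(A,V)\succeq n^{(d-1)/d}$ together with the elementary inequality $\sum_i s_i^\alpha\geq N\cdot M^{\alpha-1}$ (valid for $\alpha\in[0,1]$ because $(s_i/M)^\alpha\geq s_i/M$), I obtain
$$
|\partial_{V+Vx}(Z)|\succeq \sum_{i=0}^{n}|Z_i|^{(d-1)/d}\succeq \frac{N}{M^{1/d}}.
$$
For (b), the individual summands can be negative, but $f(i)\geq 0$ always, so $f(i)\geq(|Z_{i-1}|-|Z_i|)^+$. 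Because the sequence $(|Z_i|)_{i=-1}^{n+1}$ starts and ends at $0$ and reaches the value $M$, the sum of its positive decrements is at least $M$, giving $|\partial_{V+Vx}(Z)|\geq M$.

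The two bounds are complementary---one is strong when $M$ is small, the other when $M$ is large---so multiplying them yields
$$
|\partial_{V+Vx}(Z)|^{d+1}\succeq M\cdot\left(\frac{N}{M^{1/d}}\right)^{d}=N^{d},
$$
and hence $|\partial_{V+Vx}(Z)|\succeq N^{d/(d+1)}$, as required.

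I expect the main obstacle to be extracting bound (b): the pointwise inequality $f(i)\geq |Z_{i-1}|-|Z_i|$ fails to be useful on its own, and the previous proposition discarded it entirely. The crucial observation is that, once positive parts are taken, these summands aggregate by a total-variation argument to at least $\max_i|Z_i|$; this is the extra $x$-direction handle that interpolates against the $A$-profile to produce the sharper exponent $d/(d+1)$ in place of $(d-1)/d$.
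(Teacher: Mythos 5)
Your argument is correct. Note that the paper itself does not prove this proposition: it is stated as a quotation of Zhang (\cite{zhang}, Theorem 5.2), and the only proof actually written out in this section is the weaker lower bound $I_*(A,V)\preceq I_*(A[x,\sigma,\delta],V+Vx)$, obtained by discarding the cross-terms entirely. Your two reductions (to $A[x,\sigma]$ via the $x$-adic leading-term map, then to graded subspaces via the leading-term map of $A[x,\sigma]$ into itself) are exactly the ones the paper uses in that weaker proposition, so the only new content is the interpolation step, and that step is sound: writing $f(i)=|VZ_i+V\sigma^{\pm1}(Z_{i-1})|-|Z_i|\geq 0$, the bound $f(i)\geq|\partial_V(Z_i)|$ together with $\sum_i s_i^{\alpha}\geq NM^{\alpha-1}$ for $\alpha=(d-1)/d$ gives $\partial\succeq N/M^{1/d}$, the total-variation bound on the positive decrements of the sequence $|Z_{-1}|,\dots,|Z_{n+1}|$ (which begins and ends at $0$ and attains $M$) gives $\partial\geq M$, and $\partial^{d+1}\geq M\cdot(N/M^{1/d})^{d}=N^{d}$ follows. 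The edge cases ($Z=0$, $d=1$ where $\alpha=0$, and the constants hidden in $\succeq$) all pass through harmlessly since the boundary dimensions are integers and $\sigma^{\pm1}$ preserves dimension. In short, you have supplied a complete and correct proof of a statement the paper only cites, and your total-variation observation is precisely the "better inequality" that the paper's remark says one should expect in place of $\sum_i|VZ_i+VZ_{i-1}^{\sigma^{-1}}|\geq\sum_i|VZ_i|$.
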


This proposition gives for example a lower bound for the
isoperimetric profile of iterated Ore extensions, starting from a
finitely generated algebra $A$ with $I_*(A)\succeq
n^{\frac{d-1}{d}}$, for some $d\geq 1$.
\end{Rem}

We have also these two easy corollaries.
\begin{Cor}
Let $A$ be a finitely generated algebra and $\sigma$ an automorphism
of $A$, such that $\sigma^{m}$ is an inner automorphism for some
$m\in \mathbb{N}$. Then
$$
I_*(n;A[x,\sigma])\preceq I_*(n;A\otimes_KK[x]).
$$
\end{Cor}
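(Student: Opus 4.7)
The strategy is to exhibit a finitely generated subalgebra $B\subset A[x,\sigma]$ isomorphic to $A\otimes_K K[x]$ over which $A[x,\sigma]$ is a finite free right module, so that Proposition \ref{bigcorollary}(1) yields
$$
I_*(A[x,\sigma])\preceq I_*(B)=I_*(A\otimes_K K[x]).
$$

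Concretely, since $\sigma^m$ is inner, I fix a unit $u\in A$ with $\sigma^m(a)=uau^{-1}$ for all $a\in A$, and set $y:=u^{-1}x^m\in A[x,\sigma]$. A direct computation gives
$$
ya=u^{-1}x^ma=u^{-1}\sigma^m(a)x^m=u^{-1}(uau^{-1})x^m=ay,
$$
so $y$ is central in the subalgebra $B:=A[y]$ generated by $A$ and $y$. Using $\sigma^m(u)=u$, an easy induction yields $y^k=u^{-k}x^{mk}$, and linear independence of $\{x^{mk}\}_{k\geq 0}$ in $A[x,\sigma]$ then shows that $y$ is transcendental over $A$; hence $B\cong A[y]\cong A\otimes_K K[x]$ as $K$-algebras. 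Both $A[x,\sigma]$ and $B$ are finitely generated and so have isoperimetric profiles.

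The next step is to verify that $\{1,x,\ldots,x^{m-1}\}$ is a free right $B$-basis of $A[x,\sigma]$. For spanning: given $ax^i$ with $i=mq+r$, $0\leq r<m$, one writes
$$
ax^i=x^r\sigma^{-r}(a)x^{mq}=x^r\bigl(\sigma^{-r}(a)u^q\bigr)y^q\in x^rB.
$$
For freeness: any relation $\sum_{r=0}^{m-1}x^rb_r=0$ with $b_r=\sum_kc_{r,k}y^k\in B$ unfolds, using $x^rc=\sigma^r(c)x^r$ and $y^k=u^{-k}x^{mk}$, into
$$
\sum_{r,k}\sigma^r\bigl(c_{r,k}u^{-k}\bigr)\,x^{r+mk}=0.
$$
Since the exponents $r+mk$ with $0\leq r<m$ and $k\geq 0$ are pairwise distinct, each $\sigma^r(c_{r,k}u^{-k})=0$, and invertibility of $\sigma^r$ and of $u^{-k}$ forces $c_{r,k}=0$. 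An application of Proposition \ref{bigcorollary}(1) now completes the argument.

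The main delicate point is the right $B$-module decomposition: $y$ does \emph{not} commute with $x$ in general (one has $yx=u^{-1}x^{m+1}$ while $xy=\sigma(u^{-1})x^{m+1}$), so the rewriting of $ax^i$ as an element of $x^rB$ requires pushing the $A$-coefficient past $x^r$ via $\sigma^{-r}$ before expanding $x^{mq}$ as $u^qy^q$, and the freeness check hinges on keeping track of the twist $\sigma^r$ applied to the coefficients. Once this bookkeeping is in place, the result is a clean consequence of the finite free right module inequality already established in this section.
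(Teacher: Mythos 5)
Your proof is correct and is essentially the paper's argument: the paper likewise observes that $A[x,\sigma]$ is a finite free right module over the subalgebra generated by $A$ and $u^{-1}x^{m}$ (which is central over $A$ and transcendental, hence isomorphic to $A\otimes_K K[x]$) and then invokes the finite free right module inequality. You have simply supplied the commutation and freeness verifications that the paper leaves implicit.
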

\begin{proof}
If $\sigma^m$ is the inner automorphism given by the conjugation by
the invertible element $u\in A$, then $A[x,\sigma]$ is a finite free
module over $A[x^m,\sigma]\cong A[u^{-1}x]\cong A\otimes_K K[x]$.
The result now follows from Lemma \ref{finiterightmod}.
\end{proof}

There is also an analogous version of this corollary with the
algebra of Laurent skew polynomials $A[x,x^{-1},\sigma]$.

\begin{Cor}
Let $A$ be a finitely generated algebra and $\sigma$ an automorphism
of $A$, such that $\sigma^{m}$ is an inner automorphism for some
$m\in \mathbb{N}$. If $I_*(n;A)\sim n^{\frac{d-1}{d}}$ then
$$
I_*(n;A[x,\sigma])\sim n^{\frac{d}{d+1}}.
$$
\end{Cor}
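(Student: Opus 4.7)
The plan is to prove the asymptotic equivalence $I_*(n;A[x,\sigma])\sim n^{\frac{d}{d+1}}$ by establishing the two directions separately. The lower bound will come directly from Zhang's inequality (Proposition \ref{zhanglower}), which does not require the inner automorphism hypothesis at all, while the upper bound will use the inner automorphism hypothesis precisely to invoke the previous corollary and reduce the problem to a tensor product computation.

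For the lower bound, I would apply Proposition \ref{zhanglower} with $\delta=0$. Since $A$ is finitely generated, by Proposition \ref{geometricindependence} every frame $V$ of $A$ measures $I_*(A)$, so the hypothesis $I_*(n;A)\sim n^{\frac{d-1}{d}}$ gives $I_*(n;A,V)\succeq n^{\frac{d-1}{d}}$ for any such $V$. Proposition \ref{zhanglower} then yields
$$
I_*(n;A[x,\sigma],V+Vx)\succeq n^{\frac{d}{d+1}},
$$
and since $V+Vx$ is a frame of $A[x,\sigma]$, this gives $I_*(n;A[x,\sigma])\succeq n^{\frac{d}{d+1}}$.

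For the upper bound, the inner automorphism hypothesis on $\sigma^m$ allows the previous corollary to apply, giving
$$
I_*(n;A[x,\sigma])\preceq I_*(n;A\otimes_K K[x]).
$$
It therefore suffices to bound $I_*(A\otimes_K K[x])$ by $n^{\frac{d}{d+1}}$, and this is a direct application of Corollary \ref{tensorprodpolyn}. In the notation of that corollary, I take the first factor to be $K[x]$, whose profile is constant and hence bounded by $n^{1-1/1}$ (so $r=1$), and the second factor to be $A$ with profile $n^{\frac{d-1}{d}}=n^{1-1/d}$ (so $s=d$). Since $s\geq r\geq 1$, the corollary yields
$$
I_*(n;A\otimes_K K[x])\preceq n^{1-\frac{1}{r+s}}=n^{\frac{d}{d+1}}.
$$

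Combining the two bounds gives $I_*(n;A[x,\sigma])\sim n^{\frac{d}{d+1}}$. There is no real obstacle here: both directions are short invocations of results already in place. The only point requiring care is the bookkeeping on which factor plays the role of the ``smaller-profile'' algebra in Corollary \ref{tensorprodpolyn} (which matters because that corollary is stated asymmetrically with $s\geq r$), and the observation that the inner automorphism hypothesis is genuinely needed only for the upper bound, since Proposition \ref{zhanglower} makes no such requirement.
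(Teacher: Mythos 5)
Your proof is correct and follows exactly the paper's route: the paper's own proof is the one-line citation ``It follows from the previous corollary, Corollary \ref{tensorprodpolyn} and Proposition \ref{zhanglower},'' and you have simply unpacked that citation — lower bound from Proposition \ref{zhanglower} (with $\delta=0$), upper bound from the inner-automorphism corollary followed by the tensor-product bound with $r=1$, $s=d$. The bookkeeping is right, and your remark that the inner-automorphism hypothesis is needed only for the upper bound is accurate.
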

\begin{proof}
It follows from the previous corollary, Corollary
\ref{tensorprodpolyn} and Proposition \ref{zhanglower}.
\end{proof}

\subsection{Modules and ideals}

If $V$ is a frame of a $K$-algebra $A$ and $M$ is a left $A$-module,
then we can define the isoperimetric profile of the $A$-module $M$
as
$$
I_*(n;M,V):=\inf |\partial_V(W)|= \inf |VW/W|
$$
where the infimum is taken over all $n$-dimensional subspaces $W$
of $M$. As for algebras, the asymptotic behavior of this function
does not depend on the generating subspace $V$, hence we can talk
about \textit{the isoperimetric profile of the module} $M$ and we
will denote it by $I_*(M)$. We observe some properties of this
isoperimetric profile.

\begin{Prop} \label{modules}
Let $A$ be an algebra, $V\subset A$ a subframe of $A$ and $M=
{}_AM$ a left $A$-module.
\begin{enumerate}
    \item [(i)]If $IM=0$ for some ideal $I$ of $A$, then $I_*({}_AM,V)\sim
I_*({}_{A/I}M,\overline{V})$, where $\overline{V}$ is the image of
$V$ in $A/I$.
    \item[(ii)] If $N$ is an $A$-submodule of $M$, then $I_*(M,V)\preceq
    I_*(N,V)$.
    \item[(iii)] If $M$ is a left $A$-module,
    then $I_*({}_AM,V)\preceq I_*(A,V)$.
\end{enumerate}
\end{Prop}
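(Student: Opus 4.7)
The plan is to handle the three parts in order, with (i) and (ii) being essentially bookkeeping and (iii) being the substantive step, which I reduce to a cyclic submodule argument.

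For (i), the key observation is that the hypothesis $IM=0$ forces the $V$-action on any subspace $W\subset M$ to coincide with the $\overline V$-action. Decomposing $V=(V\cap I)\oplus V_0$ with $V_0$ mapping isomorphically onto $\overline V$, the annihilation $(V\cap I)W=0$ gives $VW=V_0W=\overline V W$ as actual subspaces of $M$. Hence $\partial_V(W)=\partial_{\overline V}(W)$ verbatim for every finite-dimensional $W$, and taking infima yields $I_*(n;{}_AM,V)=I_*(n;{}_{A/I}M,\overline V)$ for all $n$, a fortiori the asymptotic equivalence.

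For (ii), any finite-dimensional $W\subset N$ sits inside $M$ as well, and since $N$ is an $A$-submodule, $VW$ lies in $N$ and is the same subspace whether computed in $N$ or $M$. Thus $|\partial_V(W)|$ is an intrinsic invariant of $W$. The infimum defining $I_*(n;M,V)$ runs over \emph{all} $n$-dimensional subspaces of $M$, a strictly larger collection than the $n$-dimensional subspaces of $N$, so $I_*(n;M,V)\leq I_*(n;N,V)$ as required.

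For (iii), I reduce to the cyclic case using (ii): pick any $0\neq m\in M$, so that $I_*(M,V)\preceq I_*(Am,V)$, and it suffices to show $I_*(Am,V)\preceq I_*(A,V)$. The $A$-linear surjection $\phi\colon A\twoheadrightarrow Am$, $a\mapsto am$, has kernel $\operatorname{Ann}(m)$; for any subspace $W\subset A$ of dimension $n$, the image $Wm\subset Am$ has dimension $n-\dim(W\cap\operatorname{Ann}(m))$, and because $\phi$ intertwines the $V$-action we get a surjection $\partial_V(W)\twoheadrightarrow\partial_V(Wm)$, hence $|\partial_V(Wm)|\leq|\partial_V(W)|$.

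The main obstacle is the possible dimension drop $\dim(W\cap\operatorname{Ann}(m))$, which can be nonzero even for the optimizer $W$ of $I_*(n;A,V)$. To handle it, by part (i) I first replace $A$ by $A/\operatorname{Ann}(M)$ to reduce to the case where $M$ is faithful. For each nonzero $w\in W$, the set $\{m'\in M\colon wm'=0\}$ is then a proper $K$-subspace of $M$, and a standard general-position argument over the infinite base field $K$ produces $m\in M$ avoiding the corresponding union (parameterized by the finite-dimensional space $W$), so that $W\cap\operatorname{Ann}(m)=0$. For such an $m$, $|Wm|=n$ and $|\partial_V(Wm)|\leq |\partial_V(W)|=I_*(n;A,V)$, yielding $I_*(n;Am,V)\leq I_*(n;A,V)$, which combined with the first reduction gives $I_*(M,V)\preceq I_*(A,V)$.
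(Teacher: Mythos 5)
Parts (i) and (ii) of your write-up are correct and coincide with the paper's proof, which asserts (i) directly from the definitions and proves (ii) by exactly your observation that the boundary of a subspace $W\subset N$ is intrinsic to $W$.

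Part (iii) contains two genuine gaps, and they occur precisely at the points where the paper's own proof is silent. First, the reduction to the faithful case is not available: part (i) rewrites the \emph{left-hand} side as $I_*({}_{A/I}M,\overline V)$ with $I=\operatorname{Ann}(M)$, but it does nothing to the right-hand side, so after the reduction you would still need $I_*(A/I,\overline V)\preceq I_*(A,V)$ --- and Section 2.1 of the paper shows that the profile of a homomorphic image can be strictly larger than that of the algebra. Second, the general-position step fails: you must choose $m$ outside $\bigcup_{0\neq w\in W}\ker(w\,\cdot\,)$, a union of proper subspaces indexed by the \emph{infinite} set $\mathbb{P}(W)$, and an infinite union of proper subspaces can cover $M$ even over an infinite field. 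This really happens for faithful actions: let $M$ have basis $\{e_\lambda\}_{\lambda\in K}\cup\{f\}$ with $ue_\lambda=\lambda f$, $ve_\lambda=f$, $uf=vf=0$; every nonzero $\alpha u+\beta v$ acts nontrivially, yet $um$ and $vm$ lie in the line $Kf$ for every $m$, so $W=Ku+Kv$ meets $\operatorname{Ann}(m)$ for every $m$. These are not merely expository defects: the inequality in (iii) actually fails for $A=K[x]\oplus K\langle y,z\rangle$ acting on its left ideal $M=0\oplus K\langle y,z\rangle$, where $I_*({}_AM,V)\sim n$ by nonamenability of the free algebra while $I_*(A,V)$ is constant. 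The paper's proof simply asserts $|Wm|=|W|$ for a faithful cyclic module $M=Am$; you tried, commendably, to justify that assertion, and the attempt exposes that the step --- and with it the statement --- requires an extra hypothesis, e.g. the existence of $m\in M$ with $\operatorname{Ann}(m)=0$ (a copy of the regular module inside $M$), under which your surjection $\partial_V(W)\twoheadrightarrow\partial_V(Wm)$ does finish the argument.
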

\begin{proof}
The first property follows directly from the definitions.

For (ii), given a subspace $W\subset N$, the boundary
$\partial_V(W)$ is the same as if we regard $W$ as a subspace of
$N$ or of $M$, hence $I_*(M,V)\preceq I_*(N,V)$.

Now by (ii), $I_*(M,V)\preceq I_*(Am,V)$ for all $m\in M$. Hence
we can assume that $M=Am$ for some $m\in M$. By (i) we can also
assume that $M$ is faithful. In this case, given a finite
dimensional subspace $W$ of $A$ we will have $|Wm|=|W|$. Then
clearly $|\partial_V(Wm)|\leq |\partial_V(W)|$. This gives the
inequality we wanted.
\end{proof}

Consider now a frame $V$ of an algebra $A$, and an infinite
dimensional ideal $J$ in $A$. Now $J$ is a left $A$-module, hence
$$
I_*(J)\preceq I_*(A).
$$
But also $J$ is an $A$-submodule of $A$, hence $I_*(A)\preceq
I_*(J)$. Therefore $I_*(A)\sim I_*(J)$ as $A$-modules.
\begin{Rem}
Notice that the isoperimetric profile of an ideal $J$ of an
algebra $A$ as an $A$-module is a priori different from the
isoperimetric profile of $J$ as a subalgebra of $A$.
\end{Rem}

\section{Computations of isoperimetric profiles of various algebras}

The aim of this section is to prove Theorem \ref{theorem5}, by
computing the isoperimetric profiles of the algebras listed in
there.

\subsection{Algebras of $GK$-dimension $1$}

For finitely generated algebras of $GK$-dimension 1 the
isoperimetric profile is constant.

\begin{Prop}
If $A$ is a finitely generated algebra of $GK$-dimension 1, then
$I_*(A)$ is constant.
\end{Prop}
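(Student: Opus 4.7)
Fix any frame $V$ of $A$. Since $GK\dim A = 1$, linear growth yields a constant $C>0$ such that $c_n := \dim V^n \leq Cn$ for every $n \geq 1$. Put $b_n := c_{n+1}-c_n$; because $A$ is infinite-dimensional, $b_n \geq 1$ (else $V^n$ would be a finite-dimensional left ideal containing $1$, forcing $V^n = A$).

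The natural test subspaces are the powers $V^n$ themselves, for which
\[
|\partial_V(V^n)| \;=\; \dim(V\cdot V^n / V^n) \;=\; c_{n+1}-c_n \;=\; b_n.
\]
The telescoping identity $\sum_{k=0}^{n-1} b_k = c_n - 1 \leq Cn$ says the $b_k$'s are bounded on average by $C$. A Markov/pigeonhole argument then produces an infinite set $G \subset \mathbb{N}$ of indices with $b_n \leq 2C$: in every window $[0,N]$ at least half the indices satisfy this inequality, so $G$ has positive density. Along $G$ the subspaces $V^n$ realize $c_n \to \infty$ with $|\partial_V(V^n)| \leq 2C$, giving a constant-type upper bound for $I_*(A,V)$ along a cofinal sequence of dimensions.

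To upgrade the bound to every large dimension $m$, interpolate: choose the minimal $N$ with $c_N \geq m$ and any $m$-dimensional $W$ with $V^{N-1} \subseteq W \subseteq V^N$. Then $VW \subseteq V^{N+1}$, yielding
\[
|\partial_V(W)| \;\leq\; c_{N+1} - m \;\leq\; c_{N+1}-c_{N-1} \;=\; b_{N-1}+b_N.
\]
The linear cumulative bound $\sum b_k \leq Cn$ forbids long runs of large $b_k$'s, so every $N$ lies within bounded distance of a pair of consecutive indices of $G$; modifying the construction slightly to use a nearby pair of good indices then yields a uniform $O(1)$ bound. The matching lower bound $I_*(A) \succeq 1$ follows since $|\partial_V(W)| = 0$ would force $W$ to be a proper finite-dimensional left $A$-submodule, and by Small--Stafford--Warfield a finitely generated $GK\dim 1$ algebra satisfies a polynomial identity, so its finite-dimensional left ideals occur in at most finitely many dimensions.

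The main obstacle is the interpolation step: promoting the along-$G$ bound to a bound at \emph{every} large $m$. This is essentially a combinatorial question about how quickly the sequence of ``bad'' indices (where $b_k$ is large) can recur, and is resolved by applying the Markov inequality to the partial sums $\sum b_k \leq Cn$ together with the freedom to choose the bracketing index $N$ near the target dimension $m$.
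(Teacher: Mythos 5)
There is a genuine gap, and it sits exactly where you locate it: the interpolation step. The root of the problem is that you extract from $GK\dim A=1$ only the weak form of linear growth, $c_n=\dim V^n\leq Cn$, and then try to recover boundedness of the increments $b_n=c_{n+1}-c_n$ by averaging. Markov/pigeonhole only yields a positive-density set $G$ of indices with $b_n\leq 2C$; it cannot rule out sporadic large increments (for instance $b_k\asymp\sqrt{k}$ on the set of perfect squares is perfectly consistent with $\sum_{k<n}b_k\leq Cn$). Your sandwich construction for a target dimension $m$ is forced to live at the index $N$ with $c_{N-1}<m\leq c_N$ --- the dimension of the test subspace pins down where in the filtration it sits --- so if $b_{N-1}$ or $b_N$ happens to be one of the sporadic large increments, the bound $|\partial_V(W)|\leq b_{N-1}+b_N$ blows up, and ``using a nearby pair of good indices'' is not an available move: you cannot change $\dim W$. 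So the combinatorics alone cannot close the argument from the hypothesis $c_n\leq Cn$.

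The repair is to use the full strength of Bergman's gap theorem, which is what the paper does: for a finitely generated algebra, $GK\dim A=1$ is equivalent to linear growth in the strong sense that $|V^{n+1}|-|V^n|\leq C$ for \emph{all} $n$ (Krause--Lenagan, Theorem 2.5). With that, $|\partial_V(V^n)|\leq C$ for every $n$, the Markov argument becomes unnecessary, and your own interpolation $V^{N-1}\subseteq W\subseteq V^N$ gives $|\partial_V(W)|\leq b_{N-1}+b_N\leq 2C$ for every dimension $m$, which is the uniform bound needed. (The paper is terse and omits the interpolation, so that part of your write-up is a useful supplement.) The closing paragraph on the lower bound via Small--Stafford--Warfield is tangential: the proposition only asserts $I_*(A)\preceq 1$ in the asymptotic sense used throughout the paper, and indeed the paper's own Example 3.1.2 (Bell's example) and Remark 1 of Section 3.1 treat ``constant'' as ``bounded above.''
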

\begin{proof}
Let $A$ be a finitely generated algebra of $GK$-dimension 1. G.
Bergman proved (see \cite{lenagan}, Theorem 2.5) that for an algebra
to have $GK$-dimension 1 is equivalent to have \textit{linear
growth}, i.e. if $V$ is a frame for $A$, then for all $n\in
\mathbb{N}$
$$
|V^{n+1}|-|V^n|\leq C,
$$
where $C$ is a positive constant. This inequality can also be
written as
$$
|\partial_V(V^n)|\leq C.
$$

Since the growth is linear, this proves that the isoperimetric
profile $I_*(A)$ is constant.
\end{proof}

\begin{Rem} \label{exendsalg}
The converse of this proposition is not true.

A cheap example is given by the algebra
$$
A=K[x]\oplus K[y,z].
$$

We know by Proposition \ref{directsum} that $I_*(A)\preceq
I_*(K[x])$, and we know by Proposition \ref{polynomialalg} that
$I_*(K[x])$ is constant. However, $GK\dim A=2$.

There is a more interesting example (cf. \cite{elek3}, Example 4).
Consider the algebra $A=K\langle x,y \rangle/J$, where $J$ is the
ideal generated by all monomials in $x$ and $y$ containing at least
2 $y$'s. Clearly $V=K+Kx+Ky$ is a frame of the infinite dimensional
algebra $A$. Observe that the numbers $a_n:=|V^n|$ satisfy the
relation $a_{n}=a_{n-1}+n$, with initial conditions $a_1=3$ and
$a_2=5$. Hence $A$ has quadratic growth, and $GK\dim A=2$. On the
other hand, if we put $W_n:=span_K\{y,xy,x^2y,\dots,x^{n-1}y\}$, we
have $|W_n|=n$, and
$$
|\partial_V(W_n)|=1
$$
for all $n\in \mathbb{N}$. This shows that $I_*(A)$ is constant.
\end{Rem}

Notice that both of these examples are not domains.
\begin{Ques}
Is it true that if a prime noetherian algebra has constant
isoperimetric profile, then it has $GK$-dimension 1?
\end{Ques}
Notice that the noetherianity assumption can't be dropped: the following example is due to Jason
Bell.
\begin{Ex}[J. Bell] \label{examplejbell}
Consider the algebra $A$ over $K$ with generators $x$ and $y$ and relations $x^2$, $xy^mx$ for $m$
not a power of $2$, and for each $r\geq 2$, $xy^{2^{m_1}}x y^{2^{m_2}}x\cdot\dots\cdot x
y^{2^{m_r}}x$ whenever $\sum_{i=1}^r m_i < r2^r$. This ring has $GK\dim$ $2$ and is prime. Let $V =
K + Kx+Ky$, and for $k\geq m+1$ let $W_k = span_K\{ y^ix \, :\, 2^k+1\leq i < 2^{k+1}\}$. Then
$xW_k =(0)$ and $yW_k + W_k = W_k+Ky^{2^{k+1}}x$. Hence $|VW_k/W_k| = 1$ and $|W_k|=2^k$. This
easily implies that the isoperimetric profile of $A$ is constant.
\end{Ex}

\subsection{Commutative Domains}

We compute the isoperimetric profile of finitely generated
commutative domains.
\begin{Prop} \label{commutative}
Let $A$ be a finitely generated commutative domain over $K$, and let
$d=GK\dim A$. Then $I_*(n;A)\sim n^{\frac{d-1}{d}}$.
\end{Prop}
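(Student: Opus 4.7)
The plan is to reduce to the polynomial case via Noether normalization, pass to fraction fields using Corollary \ref{localization}, and compare the two fraction fields using Proposition \ref{bigcorollary}.

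First, since $A$ is a finitely generated commutative domain with $GK\dim A = d$, the transcendence degree of $Q(A)$ over $K$ also equals $d$. By Noether normalization, there exist algebraically independent elements $x_1, \ldots, x_d \in A$ such that $B := K[x_1, \ldots, x_d] \subset A$ and $A$ is integral over $B$, hence a finitely generated $B$-module. Consequently $Q(A)$ is a finite field extension of $F := K(x_1, \ldots, x_d)$; in particular, $Q(A)$ is finite-dimensional as an $F$-vector space and a fortiori a finite right $F$-module.

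Next, I would apply Corollary \ref{localization}(i) to the multiplicative set $\Omega := A \setminus \{0\}$ of regular elements, which is trivially (two-sided) Ore by commutativity. This yields $I_*(A) \sim I_*(Q(A))$ and in particular guarantees that $Q(A)$ possesses an isoperimetric profile. Applying the same corollary to $B$ (or directly invoking Example \ref{examplenotaffine}) gives
$$
I_*(F) \sim I_*(B) \sim n^{\frac{d-1}{d}},
$$
where the last equivalence is Proposition \ref{polynomialalg}.

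Finally, since $F$ is a division subalgebra of $Q(A)$, since $Q(A)$ is a finite right $F$-module, and since both $F$ and $Q(A)$ have been shown to admit isoperimetric profiles, Proposition \ref{bigcorollary}(2) applies verbatim to yield $I_*(Q(A)) \sim I_*(F)$. Chaining the equivalences gives
$$
I_*(A) \sim I_*(Q(A)) \sim I_*(F) \sim n^{\frac{d-1}{d}},
$$
which is the claim.

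The substantive work is already packaged in Propositions \ref{polynomialalg} and \ref{bigcorollary}(2); the proof itself is bookkeeping. The only mildly delicate point is verifying the hypotheses of Proposition \ref{bigcorollary}(2), which requires knowing that $Q(A)$ is finite over $F$ (immediate from the finite field extension) and that both sides have isoperimetric profiles (supplied by the localization corollary). No subadditivity hypothesis is needed because we are invoking the division-subalgebra case (2) of Proposition \ref{bigcorollary}, not the more demanding free-module case (1).
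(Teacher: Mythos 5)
Your proof is correct and follows essentially the same route as the paper: Noether normalization to get $B=K[x_1,\dots,x_d]\subset A$ with $A$ a finite $B$-module, then passage to the fraction fields via Corollary \ref{localization} and comparison of $F\subset Q(A)$ as a finite extension. The paper packages the two bounds slightly differently (Theorem \ref{subOredomains} for the lower bound and Lemma \ref{finiterightmod} for the upper bound, rather than citing Proposition \ref{bigcorollary}(2) outright), but the underlying argument is identical.
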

\begin{proof}
By the Noether's normalization theorem the ring $A$ is a finitely
generated module over a subring $B$ isomorphic to
$K[x_1,\dots,x_d]$.

Theorem \ref{subOredomains} implies that
$$
n^{\frac{d-1}{d}}\sim I_*(B)\preceq I_*(A).
$$
Considering now the quotient fields $Q\subset S$ of $B$ and $A$
respectively, we have that $S$ is a finite dimensional vector space
over $Q$, hence using Lemma \ref{finiterightmod} and Corollary
\ref{localization} we have
$$
I_*(A)\preceq I_*(B),
$$
which gives the result.
\end{proof}

\subsection{$PI$ algebras}

We compute the isoperimetric profile of finitely generated prime
$PI$ algebras.
\begin{Prop}
If $A$ is a finitely generated prime $PI$ algebra, then $I_*(A)\sim
n^{\frac{d-1}{d}}$, where $d= GK\dim A$.
\end{Prop}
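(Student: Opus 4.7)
The plan is to sandwich $I_*(A)$ between two copies of $n^{(d-1)/d}$, using the center $Z(A)$ as the bridge and invoking Posner's theorem to control both sides via results already established.

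The first step is to recall the structural facts from PI theory: $A$ is a (right and left) Ore domain, $A$ is a finite module over its center $Z(A)$, the center $Z(A)$ is a finitely generated commutative domain (by the Artin--Tate lemma applied to the module-finite extension $Z(A)\subset A$), and Posner's theorem provides a classical ring of quotients $Q(A)\cong M_r(F)$ where $F=Q(Z(A))$ is the field of fractions of the center. Moreover, since $A$ is a finite $Z(A)$-module, $GK\dim A = GK\dim Z(A) = d$, and by Proposition \ref{commutative} we already know that $I_*(Z(A))\sim n^{\frac{d-1}{d}}$.

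For the lower bound $I_*(A)\succeq n^{\frac{d-1}{d}}$, I would apply Theorem \ref{subOredomains} to the inclusion $Z(A)\subset A$: both are domains, $Z(A)$ is right Ore trivially (it is commutative), and $A$ is right Ore by Posner's theorem. Hence
$$
I_*(Z(A))\preceq I_*(A),
$$
and combining with Proposition \ref{commutative} gives the desired lower bound.

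For the upper bound $I_*(A)\preceq n^{\frac{d-1}{d}}$, I would chain together the localization and matrix results as follows. First, by Corollary \ref{localization}(i) applied to the right Ore set of regular elements, $I_*(A)\sim I_*(Q(A))=I_*(M_r(F))$. Next, $F$ is a commutative domain, so by Theorem \ref{domainsubadditive} its isoperimetric profile is subadditive, and Proposition \ref{bigcorollary}(4) then yields $I_*(M_r(F))\sim I_*(F)$. Finally, applying Corollary \ref{localization}(i) once more to $Z(A)\subset F=Q(Z(A))$ gives $I_*(F)\sim I_*(Z(A))\sim n^{\frac{d-1}{d}}$. Combining the two bounds produces $I_*(A)\sim n^{\frac{d-1}{d}}$.

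The only nontrivial ingredient here is the classical PI-theoretic input (Posner's theorem, module-finiteness of $A$ over $Z(A)$, finite generation of $Z(A)$, and equality of GK-dimensions); once those are in hand, the argument is a short assembly of the machinery already built in the paper. The mildly delicate point is verifying that the hypotheses of Corollary \ref{localization} and Proposition \ref{bigcorollary}(4) are actually in force (right Ore property of $A$ and subadditivity of $I_*(F)$), but both are immediate from Posner's theorem and Theorem \ref{domainsubadditive} respectively.
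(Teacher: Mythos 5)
Your overall architecture (pass to the quotient ring, reduce to the center, quote the commutative computation) is essentially the paper's, but several of the PI-theoretic structural claims you lean on are false, and two of them are load-bearing. First, a finitely generated prime PI algebra need not be a domain (e.g.\ $M_2(K[x])$ is one), so your lower-bound step via Theorem \ref{subOredomains} applied to $Z(A)\subset A$ is not available: that theorem requires both algebras to be domains. Second, a prime affine PI algebra need not be a finite module over its center, so the Artin--Tate argument for $Z(A)$ being affine and the asserted equality $GK\dim A=GK\dim Z(A)$ are unjustified as stated. Third, Posner's theorem gives $Q(A)\cong M_r(D)$ for a division algebra $D$ that is finite-dimensional over its center $F=Q(Z(A))$; it does \emph{not} give $Q(A)\cong M_r(F)$ unless $D=F$, so Proposition \ref{bigcorollary}(4) cannot be applied in the form you use it.

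All of this is repairable inside the paper's toolkit, and the repair is exactly the paper's proof. After localizing ($I_*(A)\sim I_*(Q(A))$ by Corollary \ref{localization}, using that the regular elements of a prime Goldie ring form an Ore set), observe that $Q(A)=M_r(D)$ is a finite right module over the \emph{field} $F$, so Proposition \ref{bigcorollary}(2) --- finite module over a division subalgebra --- gives the two-sided equivalence $I_*(Q(A))\sim I_*(F)$ in one stroke; no separate lower bound through $Z(A)$ is needed, since every link in this chain is an equivalence. One then needs $d$ to equal the transcendence degree of $F$ over $K$, which is the standard fact that the GK-dimension of an affine prime PI algebra equals the transcendence degree of the center of its quotient ring; finally $I_*(F)\sim n^{(d-1)/d}$ follows from Proposition \ref{commutative} together with Corollary \ref{localization}, since $F$ is the fraction field of an affine commutative domain of GK-dimension $d$.
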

\begin{proof}
A theorem of Berele says that a finitely generated $PI$ algebra has
finite $GK$-dimension (see \cite{lenagan}, 10.7).

Suppose that $A$ is a finitely generated prime $PI$ algebra, and
consider its quotient algebra $Q$, which is known to be a full
matrix algebra over a division algebra $D$, which is a finite module
over its center $F$. Clearly $d=GK\dim F$, hence the result follows
from Proposition \ref{bigcorollary} (2).
\end{proof}

We have also the following
\begin{Cor}
If $A$ is a finitely generated semiprime $PI$ algebra, then
$I_*(A)\preceq n^{\frac{d-1}{d}}$, where $d= GK\dim A$.
\end{Cor}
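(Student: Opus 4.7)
The plan is to reduce the semiprime case to the prime case of the preceding proposition by passing to the total quotient ring. Since $A$ is a finitely generated semiprime $PI$ algebra it is Goldie (a standard consequence of Posner's theorem together with the semiprime hypothesis), so writing $\Omega$ for the set of regular elements, the total quotient ring $Q:=A\Omega^{-1}$ exists and is semisimple artinian. By Wedderburn--Artin, $Q=\bigoplus_{i=1}^{k}Q_i$, where $Q_i=e_iQ$ for central orthogonal idempotents $e_i$ and the simple summands $Q_i$ are precisely the total quotient rings of the prime $PI$ algebras $A/P_i$, with $P_1,\dots,P_k$ the minimal primes of $A$. Writing $d_i:=GK\dim A/P_i$, we have $d=\max_i d_i$.

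Next, fix an index $i_0$ with $d_{i_0}=d$. The previous proposition applied to the finitely generated prime $PI$ algebra $A/P_{i_0}$ gives $I_*(A/P_{i_0})\sim n^{(d-1)/d}$, and Corollary~\ref{localization}(i) upgrades this to $I_*(Q_{i_0})\sim n^{(d-1)/d}$. Let $V$ be a frame of $A$, viewed also as a subspace of $Q$, and set $V_{i_0}:=e_{i_0}V$; this is a subframe of $Q_{i_0}$ since it contains $e_{i_0}=1_{Q_{i_0}}$. As $V_{i_0}$ is a subframe of $Q_{i_0}$ we have $I_*(Q_{i_0},V_{i_0})\preceq I_*(Q_{i_0})\sim n^{(d-1)/d}$, so there exist subspaces $W_n\subset Q_{i_0}$ with $|W_n|\sim n$ and $|\partial_{V_{i_0}}(W_n)|\preceq n^{(d-1)/d}$.

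The key observation is that because $e_{i_0}$ is central and annihilates every other simple factor, for $W_n\subset e_{i_0}Q$ we have $VW_n=(Ve_{i_0})W_n=V_{i_0}W_n$, so $|\partial_V(W_n)|=|\partial_{V_{i_0}}(W_n)|\preceq n^{(d-1)/d}$, where this boundary is now computed in the ambient algebra $Q$. By Lemma~\ref{lemmalocal}(i), for each $n$ we can find $m_n\in\Omega$ such that $W_nm_n\subset A$, with $|W_nm_n|=|W_n|$ and $|\partial_V(W_nm_n)|=|\partial_V(W_n)|$. This exhibits subspaces of $A$ of dimension $\sim n$ with $V$-boundary $\preceq n^{(d-1)/d}$, yielding $I_*(A,V)\preceq n^{(d-1)/d}$; since $V$ is a frame of the finitely generated algebra $A$, it measures the profile and we conclude $I_*(A)\preceq n^{(d-1)/d}$.

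The main difficulty to anticipate is that one cannot argue directly downstairs in $A/P_{i_0}$: a small-boundary subspace of a quotient algebra does not, in general, give rise to a small-boundary subspace of $A$, because lifting a basis can only enlarge the ambient boundary. The detour through the total quotient ring $Q$ is what makes the strategy work: the central idempotents render the simple factors genuinely orthogonal summands of $Q$, so economical subspaces supported in $Q_{i_0}$ have the same boundary with respect to $V$ as with respect to the projected subframe $V_{i_0}$, and Lemma~\ref{lemmalocal}(i) then transports them back into $A$ with no loss in either dimension or boundary.
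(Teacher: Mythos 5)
Your argument is correct and takes essentially the same route as the paper: both pass to the semisimple artinian quotient ring $Q$, decompose it into simple summands (the quotient rings of the prime factors $A/P_i$), apply the prime case to a summand, and transport the small-boundary subspaces back to $A$ via Lemma \ref{lemmalocal}/Corollary \ref{localization}. The only cosmetic difference is that you rederive the direct-sum mechanism by hand with the central idempotents (which is exactly the content of the lemma behind Proposition \ref{directsum}, cited by the paper) and single out the summand with $d_{i_0}=d$, whereas the paper's appeal to Proposition \ref{directsum} in fact yields the slightly stronger bound coming from the summand of smallest $GK$-dimension.
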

\begin{proof}
The proof of this corollary goes like the one of the previous
proposition. In this case $Q$ is a direct sum of full matrix
algebras over division algebras, which are finitely generated over
their centers. Hence the same argument we used before together with
Proposition \ref{directsum} and well known properties of the
$GK$-dimension gives the result.
\end{proof}

Notice that in the semiprime case we have a direct sum of
subalgebras, hence Proposition \ref{directsum} shows that in
general we don't have the equivalence.

\subsection{Universal enveloping algebras}

We compute the isoperimetric profile of universal enveloping
algebras of finite dimensional Lie algebras.

\begin{Prop}
The isoperimetric profile of the universal enveloping algebra
$\mathcal{U}(\frak{g})$ of a finite dimensional Lie algebra
$\frak{g}$ is $I_*(n;\mathcal{U}(\frak{g}))\sim
n^{\frac{d-1}{d}}$, where $d=\dim \frak{g}$.
\end{Prop}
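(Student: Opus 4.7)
The plan is to get matching upper and lower bounds by exploiting the standard Poincar\'e--Birkhoff--Witt filtration on $\mathcal{U}(\mathfrak{g})$, whose associated graded is the polynomial algebra on $d$ generators.

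For the lower bound, I would equip $\mathcal{U}(\mathfrak{g})$ with its standard filtration $A_0 \subset A_1 \subset A_2 \subset \cdots$, where $A_n$ is the span of products of at most $n$ elements of $\mathfrak{g}$. By the PBW theorem, $gr(\mathcal{U}(\mathfrak{g})) \cong S(\mathfrak{g}) \cong K[x_1, \ldots, x_d]$, which is a finitely generated (commutative) domain. Hence Theorem \ref{gradedassociated} applies: for any subframe $V \subset A_1$,
$$
I_*(gr(\mathcal{U}(\mathfrak{g})), V) \preceq I_*(\mathcal{U}(\mathfrak{g}), V),
$$
and by Proposition \ref{polynomialalg} the left-hand side is asymptotic to $n^{(d-1)/d}$. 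Choosing $V = K \oplus \mathfrak{g} \subset A_1$, which is a frame of $\mathcal{U}(\mathfrak{g})$, gives $I_*(\mathcal{U}(\mathfrak{g})) \succeq n^{(d-1)/d}$.

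For the upper bound, I would test the isoperimetric inequality on the filtration pieces themselves. With $V = K \oplus \mathfrak{g}$ as above, the PBW basis shows that $\dim A_n = \binom{n+d}{d}$, and since $V A_n = A_{n+1}$, we have
$$
|\partial_V(A_n)| = \dim A_{n+1} - \dim A_n = \binom{n+d}{d-1}.
$$
Asymptotically $\dim A_n \sim n^d/d!$ and $|\partial_V(A_n)| \sim n^{d-1}/(d-1)!$, so $|\partial_V(A_n)| \sim (\dim A_n)^{(d-1)/d}$. This produces subspaces of arbitrarily large dimension witnessing the upper bound $I_*(n; \mathcal{U}(\mathfrak{g}), V) \preceq n^{(d-1)/d}$; one then interpolates to all $n$ by a standard monotonicity argument (or simply notes that for any $n$ there exists $k$ with $\dim A_k \leq n < \dim A_{k+1}$, and these two dimensions differ by a factor bounded independently of $k$).

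No step here is a real obstacle: everything is essentially PBW plus the two general theorems (\ref{gradedassociated} and \ref{polynomialalg}) already proved. The only mildly delicate point is ensuring that the upper bound obtained along the subsequence $\{\dim A_n\}_{n \in \mathbb{N}}$ extends to all values of $n$, but since consecutive terms $\dim A_n$ and $\dim A_{n+1}$ have ratio tending to $1$, the asymptotic $n^{(d-1)/d}$ behavior is preserved.
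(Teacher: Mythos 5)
Your proof is correct and follows essentially the same route as the paper: the lower bound via Theorem \ref{gradedassociated} applied to the PBW filtration (so that $gr(\mathcal{U}(\frak{g}))\cong K[x_1,\dots,x_d]$), and the upper bound via an explicit family of test subspaces whose boundary has the right order. The only cosmetic difference is that you test on the filtration pieces $A_n$ (the PBW ``simplices''), whereas the paper tests on the cubes $span_K\{e_1^{m_1}\cdots e_d^{m_d}\mid 0\le m_i\le n-1\}$; both give $|\partial_V(W)|\sim |W|^{(d-1)/d}$.
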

\begin{proof}
Theorem \ref{gradedassociated} applies to the universal enveloping
algebra $\mathcal{U}(\frak{g})$ of a finite dimensional Lie algebra
$\frak{g}$. Since $gr(\mathcal{U}(\frak{g}))$ (with respect to the
natural filtration) is isomorphic to the algebra of polynomials in
$d=\dim \frak{g}$ variables, we have the lower bound
$$
I_*(n;\mathcal{U}(\frak{g}))\succeq
I_*(n;gr(\mathcal{U}(\frak{g})))\sim n^{\frac{d-1}{d}}.
$$
Now consider a basis $e_1,e_2,\dots , e_d$ of $\frak{g}$, fix the
order $e_1<e_2<\dots <e_d$ and consider the lexicographical order on
the monomials in the $e_i$'s in $\mathcal{U}(\frak{g})$. For any
$n\in \mathbb{N}$ consider the subspace
$V_n=span_K\{e_{1}^{m_1}e_{2}^{m_2}\cdots e_{d}^{m_d}\mid \text{for
all $i$ }0\leq m_i\leq n-1\}$. If we call
$\mathcal{U}_1=span_K\{1,e_1,\dots ,e_d\}$, it follows from the
definition of $\mathcal{U}(\frak{g})$ and the PBW theorem that a
basis of the boundary $\partial_{\mathcal{U}_1}(V_n)$ is given by
the classes of the monomials $e_{1}^{k_1}e_{2}^{k_2}\cdots
e_{d}^{k_d}$ such that exactly one of the $k_i$'s is equal to $n$
and all the other are smaller then $n$. Now $|V_n|=n^d$ and
$|\partial_{\mathcal{U}_1}(V_n)|=dn^{d-1}= d|V_n|^{\frac{d-1}{d}}$.
From this follows easily the upper bound we needed.
\end{proof}

We want to derive also some consequences in the infinite
dimensional case.

\begin{Prop} \label{propliealgebras}
If $A=\mathcal{U}(\frak{g})$ is the universal enveloping algebra
of an infinite dimensional Lie algebra $\frak{g}$, then for any
$0<\alpha<1$ there exists a subframe $V\subset \mathcal{U}_1$ such
that
$$
I_*(n;\mathcal{U}(\frak{g}),V)\succneqq n^{\alpha}.
$$
\end{Prop}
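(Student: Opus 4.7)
The plan is to bootstrap from the finitely generated polynomial algebra case by going through the PBW-associated graded and then down to a polynomial subalgebra via the free-module argument. Given $0<\alpha<1$, I would first choose $d\in\mathbb{N}$ with $(d-1)/d>\alpha$, pick a $d$-dimensional subspace $W\subset\mathfrak{g}$, and set $V:=K+W$. Since $V$ is finite-dimensional, contains $1$, and sits inside $\mathcal{U}_1=K+\mathfrak{g}=A_1$, it is a subframe of $\mathcal{U}(\mathfrak{g})$ of the required form.

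Next I would exploit the PBW theorem: with respect to the natural degree filtration of $\mathcal{U}(\mathfrak{g})$, the associated graded algebra is the symmetric algebra $S(\mathfrak{g})$, a polynomial algebra on any vector-space basis of $\mathfrak{g}$; in particular it is a domain. Hence Theorem \ref{gradedassociated} applies and yields
\[
I_*(n;gr(\mathcal{U}(\mathfrak{g})),V)\preceq I_*(n;\mathcal{U}(\mathfrak{g}),V).
\]
To bound the left-hand side from below, I would extend $e_1,\dots,e_d$ (a basis of $W$) to a basis of $\mathfrak{g}$ and set $B:=K[e_1,\dots,e_d]$. With this ordering, the PBW monomials show that $gr(\mathcal{U}(\mathfrak{g}))\cong S(\mathfrak{g})$ is a free left $B$-module, with basis given by the monomials in the remaining generators. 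Since $B$ is a commutative domain, Theorem \ref{domainsubadditive} guarantees that $I_*(B,V)$ is subadditive, so Proposition \ref{freeleftmod} gives
\[
I_*(n;B,V)\preceq I_*(n;gr(\mathcal{U}(\mathfrak{g})),V).
\]

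Combining these two inequalities with Proposition \ref{polynomialalg}, which gives $I_*(n;B,V)\sim n^{(d-1)/d}$, I would conclude
\[
I_*(n;\mathcal{U}(\mathfrak{g}),V)\succeq n^{\frac{d-1}{d}}\succneqq n^{\alpha}.
\]
The only point requiring mild care is that Theorem \ref{gradedassociated} was previously used for finitely generated $A$, whereas here $A_1=\mathcal{U}_1$ is infinite dimensional and $\mathcal{U}(\mathfrak{g})$ is not finitely generated. This is the main obstacle, but it is a soft one: the proof of that theorem manipulates only the finite-dimensional subframe $V\subset A_1$ and finite-dimensional test subspaces of $A$, so the argument transfers verbatim to the present setting without using finite generation of $A$.
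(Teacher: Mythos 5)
Your proposal is correct and follows essentially the same route as the paper: pass to the associated graded algebra $gr(\mathcal{U}(\frak{g}))\cong K[x_1,x_2,\dots]$ via Theorem \ref{gradedassociated}, then bound it below by the polynomial subalgebra $K[e_1,\dots,e_d]$ using Proposition \ref{freeleftmod} and the freeness of the module structure. Your explicit attention to the subadditivity hypothesis in Proposition \ref{freeleftmod} and to the applicability of Theorem \ref{gradedassociated} outside the finitely generated setting is a welcome bit of extra care that the paper leaves implicit.
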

\begin{proof}
A basis of $\mathcal{U}_1$ is given by a basis of $\frak{g}$ and
$1$. Now $gr(\mathcal{U}(\frak{g}))$ is isomorphic to the polynomial
algebra $K[x_1,x_2,\dots]$ on infinitely many variables, where each
variable $x_i$ corresponds to a basis element of $\frak{g}$.

Suppose first that $V=V_d\subset \mathcal{U}_1$, where a basis for
$V_d$ is given by the basis elements of $\mathcal{U}_1$
corresponding to $1,x_1,\dots,x_d$. Then by Theorem
\ref{gradedassociated}
$$
I_*(n,gr(A),V)\preceq I_*(n;A,V).
$$

But by Proposition \ref{freeleftmod}, since we can see $gr(A)\cong
K[x_1,x_2,\dots]$ as a free $K[V]\equiv K[x_1,\dots,x_d]$-module,
it follows that $I_*(n,gr(A),V)\succeq
I_*(n;K[x_1,\dots,x_d],V)\sim n^{\frac{d-1}{d}}$. It's easy to see
by considering the cubes in the $x_1,\dots,x_d$ as usual (and it
follows also from Proposition \ref{modules}) that
$I_*(n,gr(A),V)\preceq n^{\frac{d-1}{d}}$, and hence
$I_*(n,gr(A),V)\sim n^{\frac{d-1}{d}}$. From this the result
easily follows.
\end{proof}

This proposition implies for example that for a finitely generated
infinite dimensional Lie algebra (e.g. affine Kac-Moody algebras),
its universal enveloping algebras has an isoperimetric profile
faster then any polynomial in $n$ of degree $\alpha<1$.

\subsection{Weyl algebras}

Consider now the Weyl algebra $A_d=A_d(K)$, i.e. the algebra
$K\langle x_1,\dots,x_d,y_1,\dots, y_d\rangle$ subject to the
relations
$$
[x_i,x_j]=0= [y_i,y_j]\text{\quad and\quad}[x_i,y_j]=\delta_{i,j},
$$
where $\delta_{i,j}$ is the Kronecker symbol. It is well known
that $A_d$ is a domain.
\begin{Prop} \label{weylalg}
The isoperimetric profile of the Weyl algebra $A_d$ is
$$
I_*(n;A_d)\sim n^{\frac{2d-1}{2d}}.
$$
\end{Prop}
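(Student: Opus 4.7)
The strategy is to bound $I_*(A_d)$ from both sides by $n^{(2d-1)/(2d)}$, using the Bernstein filtration to obtain the lower bound via Theorem \ref{gradedassociated}, and using an explicit ``cube'' in the PBW basis to obtain a matching upper bound.

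For the lower bound, equip $A_d$ with the Bernstein filtration in which $\deg x_i = \deg y_i = 1$. Because the commutator $[y_i,x_i]=1$ drops degree by $2$, the associated graded algebra $gr(A_d)$ is precisely the commutative polynomial algebra $K[x_1,\dots,x_d,y_1,\dots,y_d]$ in $2d$ variables, which is a finitely generated domain. Let $V\subset (A_d)_1$ be the subframe spanned by $1,x_1,\dots,x_d,y_1,\dots,y_d$. Then Theorem \ref{gradedassociated} gives
$$
I_*(gr(A_d),V)\preceq I_*(A_d,V),
$$
and by Proposition \ref{polynomialalg}, $I_*(gr(A_d),V)\sim n^{(2d-1)/(2d)}$; this is the lower bound.

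For the upper bound, I use the PBW basis $\{x_1^{m_1}\cdots x_d^{m_d}y_1^{k_1}\cdots y_d^{k_d}\}$ of $A_d$ and set
$$
V_n:=\mathrm{span}_K\bigl\{x_1^{m_1}\cdots x_d^{m_d}y_1^{k_1}\cdots y_d^{k_d} : 0\leq m_i,k_j\leq n-1\bigr\},
$$
so that $|V_n|=n^{2d}$. Multiplying a PBW basis element $x^m y^k$ on the left by $x_i$ simply raises $m_i$ by one, producing an element outside $V_n$ only when $m_i=n-1$, which gives at most $n^{2d-1}$ new basis vectors per variable $x_i$. Multiplying by $y_i$ and then moving $y_i$ past $x_1^{m_1}\cdots x_d^{m_d}$ using the relation $y_ix_i^{m_i}=x_i^{m_i}y_i+m_ix_i^{m_i-1}$ yields a leading term $x^my_i y^k$ (in which $k_i$ is raised by one) plus correction terms of the form $m_i x_1^{m_1}\cdots x_i^{m_i-1}\cdots x_d^{m_d}y^k$; the key observation is that these correction terms have strictly smaller multi-indices and therefore lie inside $V_n$, so they contribute nothing to the boundary. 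Hence $y_iV_n$ contributes at most $n^{2d-1}$ new basis vectors. Summing over $i=1,\dots,d$ on both sides gives
$$
|\partial_V(V_n)|\leq 2d\,n^{2d-1}=2d\,|V_n|^{(2d-1)/(2d)},
$$
so $I_*(A_d,V)\preceq n^{(2d-1)/(2d)}$, completing the proof.

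The only place that requires genuine care is the computation of $y_iV_n$: one must verify that the inductively generated correction terms coming from repeated applications of $[y_i,x_i]=1$ all have multi-indices bounded by $n-1$, so that they stay inside $V_n$ and do not contribute to the boundary. Once this bookkeeping is in place, the count of boundary elements is straightforward and matches the lower bound produced by $gr(A_d)$.
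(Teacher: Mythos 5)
Your proof is correct and follows essentially the same route as the paper: the lower bound via the Bernstein (total degree) filtration, Theorem \ref{gradedassociated} and the polynomial algebra computation, and the upper bound via the PBW cubes $V_n$ with $|\partial_V(V_n)|\leq 2d\,n^{2d-1}$. Your extra bookkeeping on the commutator correction terms (which the paper leaves as ``it's clear'') is accurate, since those terms have strictly smaller multi-indices and hence lie in $V_n$.
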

\begin{proof}
The lower bound $n^{\frac{2d-1}{2d}}\preceq I_*(n;A_d)$ is given
by Theorem \ref{gradedassociated}, since $gr(A_d)$ (with respect
to the filtration determined by total degree) is isomorphic to the
algebra of polynomials $K[ x_1,\dots,x_d,y_1,\dots, y_d]$.

Now for any $n\in \mathbb{N}$ consider the subspace
$V_n=span_K\{x_{1}^{m_1}\cdots x_{d}^{m_d}y_{1}^{m_{d+1}}\cdots
y_{d}^{m_{2d}}\mid \text{for all $i$ } 0\leq m_i\leq n-1\}$. It's
easy to see that a basis for $A_d$ is given by the monomials of
the form $x_{1}^{m_1}\cdots x_{d}^{m_d}y_{1}^{m_{d+1}}\cdots
y_{d}^{m_{2d}}$. Calling
$V=span_K\{x_1,\dots,x_d,y_1,\dots,y_d\}$, it's clear that a basis
for $\partial_V(V_n)$ is given by the classes of the monomials
$x_{1}^{k_1}\cdots x_{d}^{k_d}y_{1}^{k_{d+1}}\cdots
y_{d}^{k_{2d}}$ such that exactly one of the $k_i$'s is equal to
$n$  and all the other are smaller then $n$. Now $|V_n|=n^{2d}$
and $|\partial_V(V_n)|=2dn^{2d-1}=2d |V_n|^{\frac{2d-1}{2d}}$.
From this it follows easily the upper bound we needed.
\end{proof}

\subsection{Quantized algebras}

In this subsection we compute the isoperimetric profile of some
quantized algebras related to quantum groups.

We start with quantum skew polynomial algebras. Let $\{p_{ij}\mid
1\leq i<j\leq d\}$ be a set of nonzero scalars in $K$. The
\textit{quantum skew polynomial algebra}
$K_{p_{ij}}[x_1,\dots,x_d]$ is generated by the variables
$x_1,\dots,x_d$ subject to the relations $x_jx_i=p_{ij}x_ix_j$ for
all $i<j$. The set of ordered monomials $\{x_{1}^{l_1}\cdots
x_{d}^{l_d}\mid (l_1,\dots,l_d)\in \mathbb{N}^d\}$ is a basis over
$K$ of $K_{p_{ij}}[x_1,\dots,x_d]$. In \cite{zhang1}, Example 7.1,
Zhang gives a valuation from $K_{p_{ij}}[x_1,\dots,x_d]$ to
$K[x_1,\dots,x_d]$, hence by Theorem \ref{valuationinequality} we
have
$$
I_*(n;K_{p_{ij}}[x_1,\dots,x_d])\succeq
I_*(n;K[x_1,\dots,x_d])\sim n^{\frac{d-1}{d}}.
$$

Consider now the subspaces $V_n:=span_K\{x_{1}^{m_1}\cdots
x_{d}^{m_d}\mid \text{ for all $i$ }\,\,\, 0\leq m_i\leq n-1 \}$
corresponding to the cubes in $\mathbb{Z}_{\geq 0}^{d}$, and let
$V=span_K\{1,x_1,\dots ,x_d\}$. Clearly $|V_n|=n^d$, and from the
defining relations it follows that $|\partial_V(V_n)|=dn^{d-1}=d
|V_n|^{\frac{d-1}{d}}$ (see the proof of Corollary \ref{weylalg}).
From this it follows easily the upper bound
$$
I_*(n;K_{p_{ij}}[x_1,\dots,x_d])\preceq
I_*(n;K[x_1,\dots,x_d])\sim n^{\frac{d-1}{d}},
$$
giving
$$
I_*(n;K_{p_{ij}}[x_1,\dots,x_d])\sim n^{\frac{d-1}{d}}.
$$

The following definition is in \cite{zhang1}, Section 7.

\begin{Def}
Consider the lexicographical order on $\mathbb{Z}^d$ with
$\mathrm{deg}(e_i)<\mathrm{deg}(e_j)$ for $i<j$, where $e_i$ is
the vector with $1$ in the $i$-th position, and $0$ elsewhere. An
algebra $A$ is called a \textit{filtered skew polynomial algebra
in $d$ variables} if there is a set of generators
$\{x_1,\dots,x_d\}$ of $A$ such that the following three
conditions hold.
\begin{itemize}
    \item[(q1)] The set of monomials $\{x_{1}^{l_1}\cdots x_{d}^{l_d}\mid (l_1,\dots,l_d)\in
\mathbb{N}^d\}$ is a basis over $K$ of $A$. We define
$\mathrm{deg}(x_{1}^{l_1}\cdots x_{d}^{l_d})=(l_1,\dots,l_d)$ and
$F_{(l_1,\dots,l_d)}$ to be the set of all linear combinations of
monomials of degree $\leq (l_1,\dots,l_d)$.
    \item[(q2)] $\{F_{(l_1,\dots,l_d)}\mid (l_1,\dots,l_d)\in
    \mathbb{N}^d\}$ is a filtration of $A$.
    \item[(q3)] The associated graded algebra $gr(A)$ is
    isomorphic to a quantum skew polynomial algebra.
\end{itemize}
\end{Def}

For example it's easy to see that the Weyl algebras are filtered
skew polynomial algebras.

The following proposition is an immediate consequence of Theorem
\ref{gradedassociated} and what we have shown before.
\begin{Prop} \label{filteredskew}
If $A$ is a filtered skew polynomial algebra in $d$ variables,
then
$$
I_*(n;A)\succeq n^{\frac{d-1}{d}}.
$$
\end{Prop}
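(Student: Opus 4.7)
The plan is to invoke the valuation framework of Theorem \ref{valuationinequality} using the leading-term map of the $\mathbb{Z}^d$-filtration, and then reduce the desired bound on $A$ to the already-computed lower bound on a quantum skew polynomial algebra.

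First I would verify that the filtration $\{F_{(l_1,\dots,l_d)}\}$ supplied by (q1)-(q2) is a $\Gamma$-filtration in the sense of (f0)-(f4), with $\Gamma = \mathbb{N}^d$ under the lex order (a totally ordered semigroup). This is essentially a direct reading of (q1)-(q2) together with the observation that $1 \in F_{(0,\dots,0)} \setminus F_{<(0,\dots,0)}$ and that monomials of distinct multidegrees are linearly independent. By (q3), $gr(A)$ is a quantum skew polynomial algebra, hence a $\Gamma$-graded domain, which secures (f5). Consequently the leading-term map $\nu : A \to gr(A)$ is a valuation in the sense of the definition preceding Theorem \ref{valuationinequality}.

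Next I would fix a frame $V$ of $A$ containing $1, x_1, \ldots, x_d$. Theorem \ref{valuationinequality} then yields
$$
I_*(n; gr(A), \nu(V)) \preceq I_*(n; A, V).
$$
Since $\nu(x_i)$ are the degree-one generators of $gr(A) \cong K_{p_{ij}}[x_1,\ldots,x_d]$ for some scalars $p_{ij}$, the image $\nu(V)$ is a frame of $gr(A)$. The computation carried out earlier in this subsection gave $I_*(n; K_{p_{ij}}[x_1,\ldots,x_d]) \sim n^{(d-1)/d}$, and by frame-independence (Proposition \ref{geometricindependence}) this is also $I_*(n; gr(A), \nu(V))$. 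Chaining the two inequalities delivers $I_*(n; A) \succeq n^{(d-1)/d}$.

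The only real point requiring care is the verification that (f0)-(f5) are in force so that $\nu$ qualifies as a valuation; this is a direct check from (q1)-(q3) and is the reason the hypothesis ``$gr(A)$ is a quantum skew polynomial algebra'' (a domain) is built into the definition. Everything else is a plug-and-play application of results already established, which is why the author describes it as an immediate consequence.
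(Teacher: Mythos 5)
Your proposal is correct and follows the paper's intended route: transfer the lower bound $n^{(d-1)/d}$, already established for the quantum skew polynomial algebra $gr(A)$, back to $A$ via the filtered-to-graded machinery. The paper cites Theorem \ref{gradedassociated} for this step, but since the filtration here is indexed by $\mathbb{N}^d$ with the lexicographic order rather than by $\mathbb{N}$, your appeal to the leading-term valuation and Theorem \ref{valuationinequality} is in fact the more precise formulation of the same argument.
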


Now we want to consider the quantum matrix algebras
$M_{q,p_{ij}}(d)$ and the quantum groups $GL_{q,p_{ij}}(d)$. See
\cite{artin} for details on these algebras.

Given a set of nonzero scalars $\{q\}\cup \{p_{ij}\mid 1\leq i<j\leq d\}$, the \textit{quantum matrix algebra} $M_{q,p_{ij}}(d)$ is generated by
$\{x_{ij}\mid 1\leq i,j\leq d\}$ subject to the relations (7.4.1) of \cite[p. 2885]{zhang1}. It's easy to show (cf. \cite{zhang1}, Example 7.4)
that $M_{q,p_{ij}}(d)$ is a filtered skew polynomial algebra on $d^2$ variables, hence by Proposition \ref{filteredskew}
$$
I_*(n;M_{q,p_{ij}}(d))\succeq n^{\frac{d^2-1}{d^2}}.
$$

To prove the other inequality, for each $n\in \mathbb{N}$ we
define the subspace
$$
V_n:=span_K\{x_{11}^{m_{11}} x_{12}^{m_{12}}\cdots x_{1d}^{m_{1d}}
x_{21}^{m_{21}} \cdots x_{2d}^{m_{2d}}\cdots x_{dd}^{m_{dd}} \mid
\text{for all $i$ and $j$ }\,\,\, 0\leq m_{ij}\leq n-1\},
$$
and we put $V:=K+ span_K\{x_{ij}\mid 1\leq i,j\leq d\}$. Using the defining relations it's easy to show that $VV_n\subset V_{n+1}$. This would
imply that
\begin{eqnarray*}
|\partial_V(V_n)| & = & |VV_n|-|V_n|\leq
|V_{n+1}|-|V_n|\\
 & = & (n+1)^{d^2}-n^{d^2}\sim
n^{d^2-1}=|V_n|^{\frac{d^2-1}{d^2}}.
\end{eqnarray*}
As usual, from this it follows easily the upper bound
$$
I_*(n;M_{q,p_{ij}}(d))\preceq n^{\frac{d^2-1}{d^2}},
$$
which gives
$$
I_*(n;M_{q,p_{ij}}(d))\sim n^{\frac{d^2-1}{d^2}}.
$$

The \textit{quantum group} $GL_{q,p_{ij}}(d)$ is defined to be the
localization $M_{q,p_{ij}}(d)[D^{-1}]$, where $D$ is the quantum
determinant of $M_{q,p_{ij}}(d)$, and $M_{q,p_{ij}}(d)[D^{-1}]$
indicates the right localization with respect to the subset
$\{D^n\mid n\in \mathbb{N}\}$. Hence by Corollary \ref{localization}
we have
$$
I_*(n;GL_{q,p_{ij}}(d))\sim I_*(n;M_{q,p_{ij}}(d))\sim
n^{\frac{d^2-1}{d^2}}.
$$

Consider now the quantum Weyl algebra $A_d(q,p_{ij})$ (see
\cite{giaquinto} for details on this algebras).

Given a set of nonzero scalars $\{q\}\cup \{p_{ij}\mid 1\leq i<j\leq
d\}$, the \textit{quantum Weyl algebra} $A_d(q,p_{ij})$ is generated
by $\{x_1,\dots,x_d,y_1,\dots ,y_d\}$ subject to the relations given
in \cite{zhang1}, Example 7.5. It's easy to see (cf. \cite{zhang1},
Example 7.5) that defining $\mathrm{deg}(x_i)=d+1-i$ and
$\mathrm{deg}(y_i)=2d+1-i$, $A_d(q,p_{ij})$ is a filtered skew
polynomial algebra in $2d$ variables. Hence by Proposition
\ref{filteredskew} we have
$$
I_*(n;A_d(q,p_{ij}))\succeq n^{\frac{2d-1}{2d}}.
$$

To prove the other inequality, for each $n\in \mathbb{N}$ we
define the subspace
$$
V_n:=span_K\{x_{1}^{m_{1}} \cdots x_{d}^{m_{d}} y_{1}^{n_{1}}
\cdots y_{d}^{n_{d}} \mid \text{for all $i$ and $j$ }\,\,\, 0\leq
m_{i},n_j\leq n-1\},
$$
and we put $V:=K+ span_K\{x_1,\dots,x_d,y_1,\dots,y_d\}$. Again we can show that $VV_n\subset V_{n+1}$, from which it follows easily the upper
bound
$$
I_*(n;A_d(q,p_{ij}))\preceq n^{\frac{2d-1}{2d}},
$$
which gives
$$
I_*(n;A_d(q,p_{ij}))\sim n^{\frac{2d-1}{2d}}.
$$

Consider now the \textit{quantum group} $\mathcal{U}(\frak{sl}_2)$
(see \cite{jimbo}). This is an algebra isomorphic to an algebra
generated by $\{e,f',h\}$ subject to the relations (7.6.2) of
\cite{zhang1}, pag. 2887.

It's easy to see that it is a filtered skew polynomial algebra in
three variables, setting $\mathrm{deg}(h)=(1,0,0)$,
$\mathrm{deg}(e)=(0,1,0)$ and $\mathrm{deg}(f')=(0,0,1)$ (cf.
\cite{zhang1}, Example 7.6). This by Proposition
(\ref{filteredskew}) gives the lower bound
$$
I_*(n;\mathcal{U}(\frak{sl}_2))\succeq n^{\frac{2}{3}}.
$$

Now consider for each $n\in \mathbb{N}$ the subspace
$$
V_n:=span_K\{h^{m_1}e^{m_2}f'^{m_3}\mid 0\leq m_1\leq 2(n-1)\,
\text{ and }\,\, 0\leq m_i\leq n-1\,\, \text{ for $i=2,3$}\},
$$
and let $V=span_K\{1,h,e,f'\}$. We can show that $VV_n\subseteq
V_{n+1}$, from which it follows easily the upper bound
$$
I_*(n;\mathcal{U}(\frak{sl}_2))\preceq n^{\frac{2}{3}},
$$
which gives
$$
I_*(n;\mathcal{U}(\frak{sl}_2))\sim n^{\frac{2}{3}}.
$$

There is also another version of the quantum universal enveloping
algebra $\mathcal{U}(\frak{sl}_2)$, say $\mathcal{U}'(\frak{sl}_2)$,
which was studied in \cite{jing}. Given $q\in K\setminus \{0\}$, the
quantum universal enveloping algebra $\mathcal{U}'(\frak{sl}_2)$ is
generated by $\{e,f,h\}$ subject to the relations
\begin{eqnarray} \label{relationsquantumsl2zhang}
\notag qhe-eh & = & 2e,\\
hf-qfh & = & -2f,\\
\notag ef-qfe & = & h+ \frac{1-q}{4}h^{2}.
\end{eqnarray}
Defining $\mathrm{deg}(h)=(1,0,0)$, $\mathrm{deg}(e)=(0,1,0)$ and
$\mathrm{deg}(f)=(0,0,1)$, $\mathcal{U}'(\frak{sl}_2)$ is a
filtered skew polynomial algebra in three variables (cf.
\cite{zhang1}, Example 7.6). This by Proposition
\ref{filteredskew} gives the lower bound
$$
I_*(n;\mathcal{U}'(\frak{sl}_2))\succeq n^{\frac{2}{3}}.
$$

For the upper bound we can use the same subspaces $V_n$ (where of
course we replace $f'$ with $f$).

We summarize the computations of this section in the following
\begin{Prop}
With the notations we explained in this subsection,
\begin{itemize}
    \item[(1)] $I_*(n;K_{p_{ij}}[x_1,\dots,x_d])\sim n^{\frac{d-1}{d}}$;
    \item[(2)] $I_*(n;M_{q,p_{ij}}(d))\sim n^{\frac{d^2-1}{d^2}}$;
    \item[(3)] $I_*(n;GL_{q,p_{ij}}(d))\sim n^{\frac{d^2-1}{d^2}}$;
    \item[(4)] $I_*(n;A_d(q,p_{ij}))\sim n^{\frac{2d-1}{2d}}$;
    \item[(5)] $I_*(n;\mathcal{U}(\frak{sl}_2))\sim
    n^{\frac{2}{3}}$;
    \item[(6)] $I_*(n;\mathcal{U}'(\frak{sl}_2))\sim
    n^{\frac{2}{3}}$.
\end{itemize}
\end{Prop}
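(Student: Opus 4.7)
The proposition is a summary assertion: all six equivalences have essentially been established in the text leading up to it, so the plan is simply to assemble, case by case, the matching upper and lower bounds using a common three-step strategy. For each algebra $A$ in the list, I would (i) realize $A$ as a filtered skew polynomial algebra in the appropriate number of variables (and in case (3) additionally invoke localization), (ii) obtain the lower bound $I_*(n;A)\succeq n^{(d-1)/d}$ from Proposition~\ref{filteredskew} (or directly from Theorem~\ref{gradedassociated} via Zhang's valuation in case (1)), and (iii) exhibit an explicit family of ``box'' subspaces $V_n$ together with a generating subframe $V$ and verify $VV_n\subseteq V_{n+1}$, from which the desired upper bound follows by counting.

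For the lower bounds, (1) is the base case: Zhang's valuation from $K_{p_{ij}}[x_1,\dots,x_d]$ to the commutative polynomial ring combined with Theorem~\ref{valuationinequality} and Proposition~\ref{polynomialalg} gives $I_*(n;K_{p_{ij}}[x_1,\dots,x_d])\succeq n^{(d-1)/d}$. Cases (2), (4), (5), (6) are then obtained by recognizing the algebra as a filtered skew polynomial algebra, with the number of variables $d^2$, $2d$, $3$, $3$ respectively, using the degree assignments indicated in the preceding text, and then applying Proposition~\ref{filteredskew}. Case (3) reduces to (2) immediately by Corollary~\ref{localization}, since $GL_{q,p_{ij}}(d) = M_{q,p_{ij}}(d)[D^{-1}]$ is a right Ore localization at the powers of the quantum determinant.

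For the upper bounds, the recipe is in each case to let $V_n$ be the span of the ordered monomials with each exponent in $\{0,1,\dots,n-1\}$ (with the single modification, in cases (5) and (6), that the exponent of $h$ is allowed to range over $\{0,1,\dots,2(n-1)\}$ to absorb the quadratic term in $h$ that appears when commuting $e$ past $f$ or $f'$), and to let $V$ be the span of $1$ together with the chosen generators. A direct inspection of the defining relations shows that $VV_n\subseteq V_{n+1}$, so
\[
|\partial_V(V_n)|\leq |V_{n+1}|-|V_n|\sim |V_n|^{(d-1)/d},
\]
where $d$ is the total number of exponent indices. This produces the matching upper bound. In case (1) one can even be more precise: the relations $x_jx_i = p_{ij}x_ix_j$ preserve the cube exactly on the nose, so $|\partial_V(V_n)| = dn^{d-1}$ as in the commutative case.

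The main content, and the only non-mechanical step, is the verification $VV_n\subseteq V_{n+1}$, which must be checked algebra by algebra from the explicit relations; the slight rectangular correction in (5) and (6) is dictated by the relation $ef-qfe = h + \tfrac{1-q}{4}h^2$, which produces an $h^2$ when one commutes a factor of $e$ across a factor of $f$, forcing us to allow degrees up to $2(n-1)$ in $h$ rather than $n-1$. Once those verifications are in hand, the proposition follows by collecting the six bound pairs and invoking the two general results, Proposition~\ref{filteredskew} and Corollary~\ref{localization}, together with Theorem~\ref{valuationinequality} for case (1).
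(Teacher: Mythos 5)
Your proposal follows the paper's argument essentially verbatim: lower bounds from Zhang's valuation in case (1) and from Proposition~\ref{filteredskew} (filtered skew polynomial structure in $d$, $d^2$, $2d$, $3$, $3$ variables) in the remaining cases, case (3) reduced to (2) by Corollary~\ref{localization}, and upper bounds from the box subspaces $V_n$ with $VV_n\subseteq V_{n+1}$, including the correct rectangular adjustment of the $h$-exponent to $2(n-1)$ in cases (5) and (6). This is exactly how the paper proceeds, so the proposal is correct and takes the same route.
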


All together the computations we performed in this section give a
proof of Theorem \ref{theorem5}.

\section{Relations with other invariants}

In this section we compare the isoperimetric profile to some other
invariants for infinite dimensional algebras.

\subsection{$I_*$ and the F\o lner function}

Given an amenable algebra $A$ and a subframe $V$ of $A$, we define
the \textit{F\o lner function} $F_*(n;A,V)$ \textit{with respect
to} $V$ (cf. \cite{gromov}) to be the minimal dimension of a
subspace $W$ of $A$ such that
$$
|\partial_V(W)|\leq \frac{|W|}{n}.
$$

Notice that this function is not defined for a nonamenable
algebra.

As we did for the isoperimetric profile, we say that an algebra
$A$ has F\o lner function if there exists a subframe $V$ of $A$
such that
$$
F_*(A,W)\preceq F_*(A,V)
$$
for any subframe $W$ of $A$. We denote this function and its
asymptotic equivalence class by $F_*(A)$, and we say that a
subframe $V$ measures $F_*(A)$ if $F_*(A)\sim F_*(A,V)$.

It can be proved in the same way as we did for the isoperimetric
profile that a finitely generated algebra $A$ has F\o lner function,
and its asymptotic behavior is measured by any frame $V$ of $A$.

Notice that if $n$ is in the image of $F_*(A,V)$, then
$$
I_*(n)=I_*(|W|)\leq |\partial_V(W)|\leq
\frac{|W|}{F_{*}^{-1}(|W|)}
$$
for a suitable subspace $W$ of dimension $n$. This would suggest
the inequality
$$
I_*(n)\preceq \frac{n}{F_{*}^{-1}(n)},
$$

where $F_{*}^{-1}(n):=\sup \{k\mid F_*(k)\leq n\}$.
\begin{Ques}
Is this inequality always true? Is it true for domains? Is it true
for semigroups?
\end{Ques}

Of course there is the analogous definition for semigroups: in
this case the F\o lner function is denoted by $F_{\circ}$ (cf.
\cite{gromov}).

In \cite{gromov} there are various proofs of the lower bound for the
F\o lner function of $\mathbb{Z}_{\geq 0}^{d}$, the upper bound
being clear considering the cubes:
$$
F_{\circ}(n;\mathbb{Z}_{\geq 0}^{d})\sim n^d.
$$

Notice that in this particular case $I_{\circ}(n)\sim n/
F_{\circ}^{-1}(n)$.
\begin{Ques}
Are these two functions always equivalent? Is it true for algebras?
Is it true for domains?
\end{Ques}

The equivalence $I_{*}(n)\sim n/ F_{*}^{-1}(n)$ is correct at least
in the case of polynomial algebras. In fact, using the fact that the
F\o lner functions of an orderable semigroup and its semigroup
algebra are asymptotically equivalent (see \cite{gromov}, Section
3), we have
$$
F_*(n;K[x_1,\dots,x_d])\sim n^d.
$$

Sometimes the F\o lner function is easier to handle than the
isoperimetric profile (see \cite{erschler}). For example the F\o
lner function of the tensor products has an easier relation with the
F\o lner functions of the factors.
\begin{Prop}
Given $A$ and $B$ two $K$-algebras, if $V_A$ and $V_B$ are two
subframes of $A$ and $B$ respectively, and $V:=V_A\otimes 1+
1\otimes V_B$, we have
$$
F_*\left(\frac{mn}{m+n};A\otimes_KB,V\right)\leq
F_*(m;A,V_A)F_*(n;B,V_B).
$$
\end{Prop}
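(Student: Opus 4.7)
The plan is to test the F\o lner condition on $A\otimes_K B$ using a tensor product of optimal F\o lner subspaces from the factors. Specifically, let $W_A\subset A$ be a subspace of dimension $m_0=F_*(m;A,V_A)$ realizing $|\partial_{V_A}(W_A)|\leq |W_A|/m$, and let $W_B\subset B$ be a subspace of dimension $n_0=F_*(n;B,V_B)$ realizing $|\partial_{V_B}(W_B)|\leq |W_B|/n$. These exist by the definition of the F\o lner function. The candidate witness for the left-hand side will be $W:=W_A\otimes W_B$, which has dimension $m_0 n_0$.

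Next, I would compute $VW$ explicitly using the decomposition of $V=V_A\otimes 1+1\otimes V_B$:
$$
VW \;=\; (V_AW_A)\otimes W_B \;+\; W_A\otimes (V_BW_B).
$$
Since both summands contain $W_A\otimes W_B=W$, taking the quotient by $W$ and using the elementary inequality $|X+Y|/|X\cap Y|\leq |X/(X\cap Y)|+|Y/(X\cap Y)|$ (applied with $X\cap Y\supseteq W$), I obtain
$$
|\partial_V(W)| \;\leq\; |W_B|\cdot|\partial_{V_A}(W_A)| \;+\; |W_A|\cdot|\partial_{V_B}(W_B)|.
$$
This is the same type of inequality that appeared in the proof of Proposition \ref{tensorprodprofile}.

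Then I plug in the F\o lner estimates for $W_A$ and $W_B$:
$$
\frac{|\partial_V(W)|}{|W|} \;\leq\; \frac{|\partial_{V_A}(W_A)|}{|W_A|} + \frac{|\partial_{V_B}(W_B)|}{|W_B|} \;\leq\; \frac{1}{m}+\frac{1}{n} \;=\; \frac{m+n}{mn}.
$$
Equivalently, $|\partial_V(W)|\leq |W|/(mn/(m+n))$, which by definition of $F_*$ forces
$$
F_*\!\left(\tfrac{mn}{m+n};A\otimes_K B,V\right) \;\leq\; |W| \;=\; m_0 n_0 \;=\; F_*(m;A,V_A)\,F_*(n;B,V_B).
$$

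I do not expect any genuine obstacle: the whole argument is a direct F\o lner-version of the tensor-product boundary inequality already used for the isoperimetric profile, with the key point being that each of the two pieces of $VW$ lies in the $(V_A\otimes 1)$- or $(1\otimes V_B)$-boundary of $W$ of a single factor, whose dimension scales by the dimension of the other factor. The only mild care needed is checking that the quotient $VW/W$ is really bounded by the \emph{sum} of the two factor-boundaries (i.e.\ the two summands are not combined in a way that would produce extra cancellation we need), which follows immediately from the sub-additivity of dimension for a sum of subspaces.
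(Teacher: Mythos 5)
Your proposal is correct and is essentially the paper's own argument: the paper likewise takes F\o lner-optimal subspaces $W\subset A$ and $Z\subset B$ and applies the boundary estimate $|\partial_V(W\otimes Z)|\leq |Z|\,|\partial_{V_A}(W)|+|W|\,|\partial_{V_B}(Z)|$ from the proof of the tensor-product proposition to get the bound $\frac{m+n}{mn}|W||Z|$. The only difference is that you make explicit the choice of $W_A$, $W_B$ of minimal dimension, which the paper leaves implicit.
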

\begin{proof}
We use the proof of Proposition \ref{tensorprodprofile}: we keep
the same notation we used there, but this time we choose suitable
subspaces $W\subset A$ and $Z\subset B$ for which
$|\partial_{V_A}(W)|\leq |W|/m$ and $|\partial_{V_B}(Z)|\leq
|Z|/n$. We get
\begin{eqnarray*}
|\partial_V(W\otimes Z)| & \leq &  |Z||\partial_{V_A}(W)|+|W|
|\partial_{V_B}(Z)|\\
 & \leq &  \frac{|W||Z|}{m}+\frac{|W||Z|}{n}=
\frac{m+n}{mn}|W||Z|,
\end{eqnarray*}
which gives the result.
\end{proof}

Putting $m=n$ in the proposition we get the following
\begin{Cor}
In the same notation of the previous proposition,
$$
F_*\left(n;A\otimes_KB,V\right)\preceq F_*(n;A,V_A)F_*(n;B,V_B).
$$
\end{Cor}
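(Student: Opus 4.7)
The plan is a direct substitution in the previous proposition. Specifically, I would set $m=n$ in the inequality
$$
F_*\left(\frac{mn}{m+n};A\otimes_KB,V\right)\leq F_*(m;A,V_A)F_*(n;B,V_B),
$$
which gives $\frac{mn}{m+n} = \frac{n^2}{2n} = \frac{n}{2}$, hence
$$
F_*\!\left(\tfrac{n}{2};A\otimes_KB,V\right)\leq F_*(n;A,V_A)F_*(n;B,V_B).
$$
This is essentially the whole content of the proof; no new estimate on tensor products is required beyond what the preceding proposition already supplies.

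The only remaining step is to translate this exact inequality into the asymptotic relation $\preceq$ stated in the corollary. Recall that $f_1\preceq f_2$ means there exist positive constants $C_1,C_2$ such that $f_2(C_1 n)\geq C_2 f_1(n)$ for all $n$. Substituting $2n$ in place of $n$ in the displayed inequality yields
$$
F_*(n;A\otimes_KB,V)\leq F_*(2n;A,V_A)F_*(2n;B,V_B),
$$
so the definition is satisfied with $C_1=2$ and $C_2=1$, taking $f_1(n)=F_*(n;A\otimes_KB,V)$ and $f_2(n)=F_*(n;A,V_A)F_*(n;B,V_B)$. There is no serious obstacle here: the argument is purely a substitution followed by a rescaling of the variable, and the statement is labelled a corollary precisely because all the analytic work was already carried out in the proof of the preceding proposition via the choice of subspaces $W\subset A$ and $Z\subset B$ with $|\partial_{V_A}(W)|\le |W|/n$ and $|\partial_{V_B}(Z)|\le |Z|/n$.
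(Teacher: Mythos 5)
Your proof is correct and is exactly the paper's argument: the paper derives the corollary by "putting $m=n$ in the proposition," and your additional step of rescaling $n\mapsto 2n$ to match the definition of $\preceq$ just makes explicit what the paper leaves implicit.
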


\subsection{$I_*$ and the lower transcendence degree}

In \cite{zhang} J. J. Zhang introduced the notion of the lower
transcendence degree of an algebra.
\begin{Def}
If for every subframe $V\subset A$ there is a subspace $W\subset
A$ such that
$$
|\partial_V(W)|=0,
$$
then we define the \textit{lower transcendence degree} of $A$ to
be $0$ and we write $\mathrm{Ld}(A)=0$. Otherwise there is a
subframe $V$ such that for every subspace $W$
$$
|\partial_V(W)|\geq 1.
$$
In this case the \textit{lower transcendence degree} of $A$ is
defined to be
$$
\mathrm{Ld}(A):=\sup_V \,\,\sup \{d\in \mathbb{R}_{\geq 0}\mid
\exists\,\, C>0\,\, :\,\, |\partial_V(W)|\geq
C|W|^{1-\frac{1}{d}}\text{ for all $W$}\},
$$
where $V$ ranges over all subframes of $A$. Hence $\mathrm{Ld}(A)$
is a nonnegative real number or infinity.
\end{Def}

Observe that in the definition of the lower transcendence degree we
can use the inequality $I_*(|W|;A,V)\succeq |W|^{1-\frac{1}{d}}$
instead of $|\partial_V(W)|\geq C|W|^{1-\frac{1}{d}}$. In the case
of a finitely generated algebra, since we already showed that the
asymptotic behavior of the isoperimetric profile does not depend on
the frame, we can drop the first supremum in the definition and we
can take simply some fixed frame $V$.

It's now clear from the definitions that if two algebras $A$ and
$B$ satisfy $I_*(A)\sim I_*(B)$, then
$\mathrm{Ld}(A)=\mathrm{Ld}(B)$. The converse is not always true:
\begin{Rem}
In general we do not have the inequality
\begin{equation} \label{lowerdegree}
n^{1-\frac{1}{\mathrm{Ld}(A)}} \preceq I_*(n).
\end{equation}
For example in the case $I_*(n)\sim n^{\alpha}/ \log n$ for some
$0<\alpha\leq 1$, we would have
$$
n^{\beta} \precneqq     I_*(n)
$$
for any $\beta <\alpha$, but
$$
n^{\gamma} \succneqq  I_*(n)
$$
for any $\gamma\geq \alpha$. For example, $I_*(n)\sim n/ \log n$
($\alpha=1$) is the isoperimetric profile of the group algebra of
a finitely generated polycyclic group of exponential growth (see
\cite{pittet}). Hence $(\mathrm{Ld}(A)-1)/\mathrm{Ld}(A)=\alpha$
in this case, which shows that the inequality is not true.
\end{Rem}

From this remark we see that if we have for example two algebras
$A$ and $B$ with $I_*(n;A)\sim n/ \log n$ and $I_*(n;B)\sim n$
(e.g. the group algebra of a finitely generated polycyclic group
of exponential growth and a free algebra of rank two), then
clearly $I_*(A) \nsim I_*(B)$, but
$\mathrm{Ld}(A)=\mathrm{Ld}(B)=\infty$. All this shows that the
isoperimetric profile is finer than the lower transcendence degree
as an invariant for algebras.

The following proposition follows directly from the definitions
\begin{Prop} \label{osservazhang}
If $d=\mathrm{Ld}(A)$, then $n^{\frac{s-1}{s}}\precneqq
I_*(n;A,V)$ for any $s\lneqq d$ and some particular subframe
$V\subset A$. Moreover, $I_*(n;A,W)\nsucceq n^{\frac{t-1}{t}}$ for
any $t>d$ and any subframe $W\subset A$.
\end{Prop}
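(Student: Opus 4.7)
The plan is to deduce both statements directly from the definition of $\mathrm{Ld}(A)$, treating them as two $\varepsilon$-arguments around the supremum. Throughout I set $\Sigma_V := \{r > 0 : \exists\, C > 0 \text{ with } |\partial_V(Z)| \geq C|Z|^{1-1/r} \text{ for all subspaces } Z\}$, so that $\mathrm{Ld}(A) = \sup_V \sup \Sigma_V$, and I note that $\Sigma_V$ is downward-closed in $(0,\infty)$: if $r_0 \in \Sigma_V$ with constant $C$ and $0 < r \leq r_0$, then $|Z|^{1-1/r} \leq |Z|^{1-1/r_0}$ for $|Z| \geq 1$, so $r \in \Sigma_V$ with the same $C$.

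For the first claim, fix $s$ with $0 < s < d$. Since $\sup_V \sup \Sigma_V = d > s$, there is a subframe $V$ with $\sup \Sigma_V > s$. I would then pick $s''$ with $s < s'' < \sup \Sigma_V$; by downward-closedness this $s''$ lies in $\Sigma_V$, yielding a constant $C > 0$ with $I_*(n; A, V) \geq C n^{1-1/s''}$. Since $1-1/s'' > 1-1/s$, the function $n^{1-1/s''}$ grows strictly faster than $n^{(s-1)/s}$, which gives $n^{(s-1)/s} \precneqq I_*(n; A, V)$, as desired.

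For the second claim, I argue by contradiction: assume there exist $t > d$ and a subframe $W$ with $I_*(n; A, W) \succeq n^{(t-1)/t}$. Unpacking $\succeq$, there are constants $C_1, C_2 > 0$ with $I_*(C_1 n; A, W) \geq C_2 n^{(t-1)/t}$ for all $n \in \mathbb{R}_+$. Setting $m = C_1 n$ rescales this to
$$I_*(m; A, W) \geq C\, m^{1-1/t} \qquad \text{for all } m \in \mathbb{R}_+,$$
with $C := C_2/C_1^{(t-1)/t}$. Since $I_*(|Z|; A, W) \leq |\partial_W(Z)|$ for every subspace $Z \subset A$, this shows $t \in \Sigma_W$, whence $\mathrm{Ld}(A) \geq \sup \Sigma_W \geq t > d$, contradicting $d = \mathrm{Ld}(A)$.

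The proof is essentially bookkeeping with the definition; there is no serious obstacle. The only mildly delicate point lies in the first part: one must produce a \emph{strictly} faster function, which is why one has to slip an auxiliary $s''$ between $s$ and $d$ and verify downward-closedness of $\Sigma_V$ so that $s''$ is actually attained by some subframe $V$. The rescaling step in the second part is also worth double-checking, since the definition of $\succeq$ is stated in terms of arbitrary $x \in \mathbb{R}_+$ rather than integers.
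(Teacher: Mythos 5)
Your proof is correct and is precisely the direct unpacking of the definition of $\mathrm{Ld}(A)$ that the paper intends: the paper offers no written argument beyond the remark that the proposition ``follows directly from the definitions.'' Your reading of the quantifiers in the first claim --- the subframe $V$ is allowed to depend on $s$, since the supremum over subframes need not be attained --- is the right one, and both the downward-closedness of $\Sigma_V$ and the rescaling step in the contradiction argument check out.
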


In \cite{zhang}, Proposition 1.4, Zhang proves that for any
algebra $A$,
$$
\mathrm{Ld}A\leq \mathrm{Tdeg}A\leq GK\dim A,
$$
where $\mathrm{Tdeg}A$ is the Gelfand-Kirillov transcendence
degree (see \cite{zhang} for the definition).

This together with Proposition \ref{osservazhang} implies the
following theorem, which generalizes a result in \cite{elek2}.
\begin{Thm} \label{theorem6}
If all the finitely generated subalgebras of an algebra $A$ have
finite lower transcendence degree, then $A$ is amenable.
\end{Thm}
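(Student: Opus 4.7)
The plan is to invoke Proposition \ref{amenable}, which reduces the goal to showing that $I_*(n;A,V)\precneqq n$ for every subframe $V$ of $A$. So I would fix an arbitrary subframe $V\subset A$ and let $B$ denote the subalgebra of $A$ generated (as a $K$-algebra) by the finite dimensional space $V$. Then $B$ is a finitely generated subalgebra of $A$ in which $V$ is a frame, so by hypothesis $d:=\mathrm{Ld}(B)<\infty$.

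The next step is the tautological comparison
$$
I_*(n;A,V)\leq I_*(n;B,V),
$$
which holds because every $n$-dimensional subspace $W\subset B$ is also an $n$-dimensional subspace of $A$, and $|\partial_V(W)|$ depends only on the product $VW$, which is the same whether computed inside $B$ or inside $A$. Applying Proposition \ref{osservazhang} to $B$ and choosing any $t>d$ gives
$$
I_*(n;B,V)\nsucceq n^{\frac{t-1}{t}},
$$
and the comparison above transports this to $I_*(n;A,V)\nsucceq n^{\frac{t-1}{t}}$.

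To finish I would argue by contradiction. Note first that $I_*(n;A,V)\preceq n$ always, because $1\in V$ forces $W\subset VW$, so $|\partial_V(W)|=|VW|-|W|\leq(|V|-1)|W|$ for any finite dimensional $W$. If in addition $I_*(n;A,V)\succeq n$, so that $I_*(n;A,V)\sim n$, then combining this with the trivial $n\succeq n^{\frac{t-1}{t}}$ and the transitivity of $\succeq$ would give $I_*(n;A,V)\succeq n^{\frac{t-1}{t}}$, contradicting the previous step. Therefore $I_*(n;A,V)\precneqq n$, and since $V$ was arbitrary Proposition \ref{amenable} yields amenability of $A$.

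There is no conceptual obstacle: the heart of the proof is the reduction from $A$ to the finitely generated subalgebra $B=K\langle V\rangle$ through the trivial monotonicity $I_*(n;A,V)\leq I_*(n;B,V)$, after which the hypothesis $\mathrm{Ld}(B)<\infty$ (via Proposition \ref{osservazhang}) immediately forbids $I_*(n;A,V)$ from being asymptotically linear. The only care needed is in the correct formal handling of the asymptotic relations $\preceq,\succeq,\sim,\precneqq,\nsucceq$, in particular that $f\leq g$ pointwise together with $g\nsucceq h$ implies $f\nsucceq h$.
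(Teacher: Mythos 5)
Your proof is correct and takes essentially the same route as the paper, which derives the theorem directly from Proposition \ref{osservazhang} combined with the reduction to the finitely generated subalgebra generated by a given subframe (the same mechanism as the paper's earlier corollary that amenability of all finitely generated subalgebras implies amenability of $A$). Your explicit monotonicity step $I_*(n;A,V)\leq I_*(n;B,V)$ and the careful bookkeeping with $\preceq$, $\nsucceq$, $\precneqq$ just fill in the details the paper leaves implicit.
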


An example of a finitely generated amenable division algebra with
infinite $GK$-transcendence degree is given in \cite{elek2}. Theorem
\ref{theorem6} together with previous results in this paper allows
us to provide new examples of this sort.

An easy example is the field $F:=K(x_1,x_2,\dots)$ of rational
functions in infinitely many variables.

Even more interesting examples come from universal enveloping
algebras of infinite dimensional Lie algebras with subexponential
growth, for example affine Kac-Moody algebras. In fact by
\cite{smith} these algebras have subexponential growth, and so
they are amenable (see \cite{elek1}). But from Proposition
\ref{propliealgebras} it follows that they have infinite lower
transcendence degree. Since they are domains, we can consider
their quotient division algebras to provide examples of division
algebras.

In \cite[p. 181]{zhang}, Zhang asked if is it true that for any orderable semigroup $\Gamma$ the semigroup algebra $K\Gamma$ is
$\mathrm{Ld}$-stable, i.e. $\mathrm{Ld}K\Gamma=GK\dim K\Gamma$. We conclude the subsection giving a positive answer:
\begin{Prop}
The group algebra $K\Gamma$ of an ordered semigroup $\Gamma$ is
$\mathrm{Ld}$-stable.
\end{Prop}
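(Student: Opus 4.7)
The inequality $\mathrm{Ld}K\Gamma \leq GK\dim K\Gamma$ is part of the general chain cited just before the statement, so the task is to prove $\mathrm{Ld}K\Gamma \geq GK\dim K\Gamma$. Set $d := GK\dim K\Gamma$. By the definition of the lower transcendence degree (and the reformulation $|\partial_V(W)| \geq C|W|^{1-1/d'}$ discussed right after it), it suffices to produce, for every real $d' < d$, a subframe $V \subset K\Gamma$ and a constant $C>0$ such that $I_*(n;K\Gamma,V) \geq C\, n^{1-1/d'}$ for all $n$.

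The plan is to transfer the problem to the semigroup side. By the theorem of Gromov quoted just before Proposition \ref{polynomialalg}, the isoperimetric profile of $K\Gamma$ is asymptotically equivalent to $I_\circ(\,\cdot\,;\Gamma)$; moreover the natural identification sends a frame $V=K+KS$ of $K\Gamma$ to the generating set $S$ of $\Gamma$. So it is enough to exhibit a finite generating set $S$ of $\Gamma$ for which $|\partial_S(\Omega)|\geq C'|\Omega|^{1-1/d'}$ for every finite $\Omega\subseteq \Gamma$. Applying the Coulhon--Saloff-Coste bound (Theorem \ref{coulsaloff}) this reduces to finding $S$ such that the growth satisfies the \emph{uniform} polynomial lower bound
\[
|B_S(n)| \geq c\, n^{d'}\qquad\text{for all } n\in \mathbb{N},
\]
since then $\Phi_S(2|\Omega|)\leq c''|\Omega|^{1/d'}$ and the desired boundary estimate follows.

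The construction of $S$ is the heart of the argument. Fix any generating set $S_0$ of $\Gamma$; by definition $d = \limsup_n \log|B_{S_0}(n)|/\log n$, so for any $d'<d$ there exist arbitrarily large $n_0$ with $|B_{S_0}(n_0)| \geq n_0^{d'}$. Take $S:=B_{S_0}(n_0)$, which still generates $\Gamma$ because $S_0\subseteq S$. Then $B_S(k) = B_{S_0}(n_0)^k$, and one checks that this grows at least like $k^{d'}$ uniformly in $k$. For commutative ordered $\Gamma$ this is transparent because $B_{S_0}(n_0)^k = B_{S_0}(n_0 k)$; in the general case one uses the order compatibility $\alpha<\beta \Rightarrow t\alpha < t\beta$ (hence cancellativity on both sides) to argue that enough of the $k$-fold products $\gamma_1\cdots\gamma_k$ with each $\gamma_i\in B_{S_0}(n_0)$ remain distinct to force the uniform rate $k^{d'}$. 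When $d=\infty$ the same construction is applied separately to each $d'<\infty$, producing a family of frames that forces $\mathrm{Ld}K\Gamma=\infty$.

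The main obstacle I foresee is this last step: converting the lim-sup information contained in $GK\dim K\Gamma = d$ into a \emph{uniform} polynomial lower bound on $|B_S(k)|$ by enlarging the generating set. In the abelian/ordered case the identity $B_{S_0}(n_0)^k = B_{S_0}(kn_0)$ makes the claim immediate, but in the non-commutative ordered case one must genuinely exploit the monotonicity of multiplication with respect to the order to rule out excessive collisions among $k$-fold products. Once that uniform growth estimate is in place, the Coulhon--Saloff-Coste bound and Gromov's equivalence assemble into the inequality $\mathrm{Ld}K\Gamma \geq d'$ for every $d'<d$, which completes the proof.
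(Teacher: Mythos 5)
Your overall route coincides with the paper's: transfer the problem to the semigroup via Gromov's equivalence $I_{\circ}(\Gamma,S)\sim I_*(K\Gamma,S)$, then use the Coulhon--Saloff-Coste inequality to get $|\partial_S(\Omega)|\succeq |\Omega|^{1-1/d'}$ for each $d'<d$, and read $\mathrm{Ld}K\Gamma\geq d$ off the definition. The paper's proof is exactly this and nothing more: it identifies $d=GK\dim K\Gamma$ with the degree of growth of $\Gamma$ and concludes $I_{\circ}(n;\Gamma)\succeq n^{(d-1)/d}$ (respectively $\succeq n/\Phi(n)$ with $\Phi$ slower than every positive power of $n$, when $d=\infty$) directly from Theorem \ref{coulsaloff}.

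The step you add --- converting the $\limsup$ information in $GK\dim$ into the uniform bound $|B_S(k)|\geq ck^{d'}$ that Coulhon--Saloff-Coste requires --- is the one step you do not carry out, and the device you propose provably cannot carry it out. Since $1\in S_0$ (so $S_0^0=\{1\}$), one has $B_{S_0}(n_0)^k=B_{S_0}(n_0k)$ in \emph{any} semigroup, commutative or not; hence for $S:=B_{S_0}(n_0)$ the identity $B_S(k)=B_{S_0}(n_0k)$ holds exactly, and enlarging the generating set to a ball merely rescales the argument of the growth function. It therefore cannot upgrade ``$|B_{S_0}(n)|\geq n^{d'}$ for arbitrarily large $n$'' to ``$|B_{S_0}(n_0k)|\geq ck^{d'}$ for all $k$'': no exploitation of the order at the level of $k$-fold products can help, because the set $B_S(k)$ is already computed by the identity above. (For the same reason your claim that the commutative case is ``transparent'' is not: knowing the ball size at the single radius $n_0$ says nothing about the radii $n_0k$.) So as written the proposal has a genuine gap at what you correctly call its heart. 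The honest resolutions are either to show that an ordered (hence cancellative) semigroup cannot have oscillating polynomial growth degree, or to do what the paper does and take the degree of growth in the uniform sense $\Phi(n)\preceq n^{1/d'}$ for every $d'<d$; the paper passes over this point silently, so your instinct that something needs justifying here is sound --- it is only your repair that fails.
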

\begin{proof}
By a theorem of Gromov (see \cite{gromov}, Section 3) we know that
$I_{\circ}(\Gamma,S)\sim I_*(K\Gamma,S)$ for any finite subset
$S\subset \Gamma$. Observe that $d:=GK\dim K\Gamma$ is the degree of
growth of the semigroup $\Gamma$, which may be of course infinity.
Now by the Couhlon-Saloff-Coste inequality (Theorem
\ref{coulsaloff}) we have
$$
I_*(n;\Gamma)\succeq n^{\frac{d-1}{d}},
$$
in case $d$ is finite, or
$$
I_*(n;\Gamma)\succeq n/\Phi (n),
$$
where $\Phi$ is the inverse function of the growth of $\Gamma$, if
$d$ is infinity. In the last case $\Phi$ is slower then any
positive power of $n$, hence in both cases
$$
\mathrm{Ld}K\Gamma\geq GK\dim K\Gamma.
$$

Since the other inequality is always true, this completes the
proof.
\end{proof}

\subsection{$I_*$ and the growth}

The Weyl algebra $A_1$ and its quotient division algebra $D_1$
give an example that shows that the isoperimetric profile is not a
finer invariant then the $GK$-dimension. Another example is in
\cite{lenagan}, Example 4.10, where the algebra
$\mathcal{U}(\frak{g})$ and some its localization have different
$GK$-dimensions, but they have the same isoperimetric profiles.

We may ask for an analogue of the Coulhon-Saloff-Coste inequality
(Theorem \ref{coulsaloff}) for algebras. In Remark \ref{exendsalg}
we considered the algebra $A=K\langle x,y \rangle/J$, where $J$ is
the ideal generated by all monomials in $x$ and $y$ containing at
least 2 $y$'s. We already showed that this algebra has constant
isoperimetric profile, but it has $GK$-dimension 2. This example
shows that we don't have in general an analogue for algebras of
the Coulhon-Saloff-Coste inequality. A cheaper example of this
type is the algebra $K[x]\oplus K\langle y,z \rangle$, which we
also considered in the Remark \ref{exendsalg}. Both these examples
are not domains.

An example of a prime algebra is Example \ref{examplejbell}. An
example of a domain is given by the quotient division algebra
$D_1$ of the Weyl algebra $A_1$.

In \cite{gromov}, Section 1.9, Gromov asks if there is a bound on
the growth of a domain by its F\o lner function. Keeping in mind
Questions 4 and 5, this bound would correspond to the
Coulhon-Saloff-Coste inequality for the isoperimetric profile. The
algebra $D_1$ answers this question in the negative, since in this
case clearly the F\o lner function $F_*(n)$ of $D_1$ is
asymptotically bounded by $n^2$, but $D_1$ grows exponentially. Of
course $D_1$ is not finitely generated.

A finitely generated example is given by the localization
$A_1\Omega^{-1}$ of the multiplicative closed subset $\Omega$ (of
the Weyl algebra $A_1$) generated by $x$ and $y$. This is a
finitely generated noetherian domain with $GK$-dimension $3$ but
with lower transcendence degree $2$ (see Example 4.11 in
\cite{lenagan} for details).

\section*{Acknowledgements}

I would like to thank my advisor Prof. Efim Zelmanov for his
invaluable encouragement and for providing me the proof of
Proposition \ref{zelmanov}. Thanks to Prof. Daniel Rogalsky for
having pointed me the important reference \cite{zhang} and for
useful discussions. Thanks also to Jason Bell for providing and
explaining to me Example \ref{examplejbell}, to Professors Tullio
Ceccherini-Silberstein and Aryeh Samet-Valliant for providing me
their preprint \cite{tullio}, and  to Prof. Anna Erschler for having
pointed me the reference \cite{pittet}.

\end{document}